\newcommand{\D}{\mathcal{D}}
\newcommand{\N}{\mathcal{N}}
\newcommand{\Q}{\mathcal{Q}}
\newcommand{\R}{\mathcal{R}}
\newcommand{\T}{\mathcal{T}}
\newcommand{\NN}{\mathbb{N}}
\newcommand{\PP}{\mathbb{Q}}
\newcommand{\RR}{\mathbb{R}}
\newcommand{\WW}{\mathbb{W}}
\newcommand{\XX}{\mathbb{X}}
\newcommand{\YY}{\mathbb{Y}}
\newcommand{\E} {\mathsf{E}}
\newcommand{\Iloc}{\mathcal{I}^{\mathrm{loc}}}
\newcommand{\WWloc}{\WW^{\mathrm{loc}}}
\newcommand{\Ploc}{\Pi^{\mathrm{loc}}}
\newcommand{\uloc}{u^{\mathrm{loc}}_{\WW}}
\newcommand{\ut}{\widetilde u_{\WW}}
\newcommand{\uWW}{u_{\WW}}
\newcommand{\yxi}{y_{\myvec{\xi}}}
\newcommand{\myvec}   [1] {\boldsymbol{\mathsf{#1}}} 
\newcommand{\mymatrix}[1] {\boldsymbol{\mathsf{#1}}} 
\newcommand{\myind}   [1] {\boldsymbol{#1}}          
\newcommand{\myref}   [1] {\widehat{#1}}             
\newcommand{\norm}    [2] [\XX]{\|#2\|_{#1}}
\newcommand{\res}     [2][\WW] {\rho_{#1}(#2)}
\newcommand{\wt}      [1] {\widetilde{#1}}
\newtheorem{proposition}{Proposition}
\newtheorem{lemma}      {Lemma}
\theoremstyle{definition}
\newtheorem{example}    {Example}
\newtheorem{remark}     {Remark}
\newcommand{\myState}[1]{\State\parbox[t]{\dimexpr\linewidth-\algorithmicindent}{#1\strut}}
\newcommand{\myStateDouble}[1]{\State\parbox[t]{\dimexpr\linewidth-\algorithmicindent-\algorithmicindent}{#1\strut}}
\begin{document}

\title[$hp$-adaptivitiy based on locally predicted error reduction]{An $hp$-adaptive strategy based on locally predicted error reductions}

\author[P.~Bammer, A.~Schr\"oder]{Patrick Bammer \and Andreas Schr\"oder}

\address{Fachbereich Mathematik, Paris Lodron Universit\"at Salzburg,\\ Hellbrunner~Str.~14, 5020 Salzburg, Austria}

\author[T.~P.~Wihler]{Thomas P.~Wihler}

\address{Mathematisches Institut, Universit\"{a}t Bern, Sidlerstr.~5, CH-3012 Bern, Switzerland}
		
\begin{abstract}
We introduce a new $hp$-adaptive strategy for self-adjoint elliptic boundary value problems that does \emph{not} rely on using classical a posteriori error estimators. Instead, our approach is based on a generally applicable prediction strategy for the reduction of the energy error that can be expressed in terms of local modifications of the degrees of freedom in the underlying discrete approximation space. The computations related to the proposed prediction strategy involve low-dimensional linear problems that are computationally inexpensive and highly parallelizable. The mathematical building blocks for this new concept are first developed on an abstract Hilbert space level, before they are employed within the specific context of $hp$-type finite element discretizations. For this particular framework, we discuss an explicit construction of $p$-enrichments and $hp$-refinements by means of an appropriate constraint coefficient technique that can be employed in any dimensions.  The applicability and effectiveness of the resulting $hp$-adaptive strategy is illustrated with some $1$- and $2$-dimensional numerical examples.
\end{abstract}

\keywords{$hp$-FEM, $hp$-adaptivity, weak formulations of elliptic problems, variational problems, predicted error reduction.}

\thanks{%
A.~Schr\"{o}der acknowledges the support by the Bundesministerium f\"{u}r Bildung, Wissenschaft und Forschung (BMBWF) under the Sparkling Science project SPA 01-080 'MAJA -- Mathematische Algorithmen für Jedermann Analysiert'.\\
\indent T.~P.~Wihler acknowledges the financial support of the Swiss National Science Foundation (SNSF), Grant No. 200021\_212868. 
}

\subjclass[2010]{65N30,65N50}

\maketitle
	
	
\section{Introduction}

Over the last few decades significant contributions have been made in 
\emph{a posteriori error} estimation and automatic (adaptive) mesh refinement techniques
for finite element methods (FEM), see e.g.~\cite{ainsworth:1996, becker:2001, eriksson:1995, houston:2002, szabo:1991, verfurth:1996}. A majority of works in the adaptive FEM literature are focused exclusively on local (usually isotropic) refinements of elements (i.e. on so-called $h$-refinement). In recent years, however, motivated by the pioneering \emph{a priori} results on spectral and variable-order (so-called $hp$-version) FEM of Babu\v{s}ka and co-authors, cf.~\cite{babuska:1988, babuska:1987, babuska:1989, babuska:1994, GuoBabuska:1986b,GuoBabuska:1986a}, the adaptive selection of locally varying polynomial degrees (i.e. $p$-refinement) has been taken into account as well. In combination with local mesh adaptivity, such approaches lead to \emph{$hp$-adaptive} schemes; see e.g.~the overview article~\cite{MM:2014} and the references therein, or the monographs~\cite{schwab:1998,karniadakis:1999,demkowicz:2007,Solin:2004} for theoretical and practical aspects of $hp$-FEM. A key advantage of $hp$-approximations is their ability to approximate local singularities in (otherwise smooth) solutions of partial differential equations at high algebraic or even exponential convergence rates, see, e.g.~\cite{schwab:1998}.

While it seems reasonable to generalize classical $h$-version \emph{a posteriori} error estimation to the $hp$-framework, it is well-known that the derivation of effective computable $hp$-version error bounds involves considerable technical challenges~\cite{MW:2001,DEV:2016}. Furthermore, in addition to flagging elements for refinement (as in $h$-adaptive FEM), the design of $hp$-adaptive strategies requires the development of carefully devised decision procedures that allow to choose between various possible $hp$-refinements of the marked elements; this can be accomplished, for instance, by means of exploiting appropriate smoothness testing strategies~\cite{HoustonSuli:2005,EM:2007,wihler:2011,FW:2014}.

In contrast, the approach proposed in the present paper does neither rely on classical \emph{a posteriori} error estimators nor on smoothness indicators. Instead, we develop a \emph{prediction procedure} for an efficient reduction of the (global) energy error that can be represented explicitly in terms of  $hp$-refinements on local discrete spaces. Our methodology is closely related to the energy minimization technique~\cite{houston:2016}, where local problems are employed to compare various elementwise (competitive) $p$- and $hp$-refinements with regards to the potential contribution they may provide to the decay of the global error (see also~\cite{demkowicz:2007} for a related strategy). Inspired by the low-order approach presented in~\cite{HSW:2021}, the main novelty in the current paper consists in a new construction of the approximation spaces for the local problems that \emph{properly conforms to the weak formulation} of the underlying partial differential equation, and thus, provides a mathematically sound structure; this is opposed to~\cite{houston:2016}, where artificial conditions and numerical fluxes along patch boundaries are involved. 

More precisely, for the purpose of this work, in order to perform a (local) $hp$-refinement on an element~$Q$ in the mesh (with associated polynomial degree $p_Q$), we first decompose the numerical solution $u_{hp}$ on the current (global) $hp$-space into a local part $u_{hp}^{\text{loc}}$ (with local support on or around~$Q$), and a remaining (globally supported) part $\widetilde u_{hp} = u_{hp} - u_{hp}^{\text{loc}}$. Here, our goal is to suitably replace the local part $u_{hp}^{\text{loc}}$ on a modified local $hp$-space that allows for an improved resolution of possible small-scale features of the global solution on~$Q$. To this end, we consider locally supported spaces (spanned by so-called  \emph{enrichment functions}), which either provide a polynomial degree $>p_Q$ on $Q$, or an $hp$-refinement of $Q$ (involving a small number of subelements of~$Q$ and an appropriate polynomial degree distribution on this subelements), thereby giving rise to a \emph{$p$-enrichment} or an \emph{$hp$-replacement} on $Q$, respectively. The span of the enrichment functions together with the remaining part of the current solution, $\widetilde u_{hp}$, defines the local \emph{enrichment} or \emph{replacement space}, on which a potentially refined approximate solution is obtained. Since we explicitly include the remaining part of the current solution in the definition of the local enrichment/replacement space, the new approximate solution represents a \emph{global} solution that allows to rigorously predict the (global) error reduction resulting from replacing the current solution by the new one. Since the local enrichment/replacement spaces are low-dimensional (in comparison to the full $hp$-space), we emphasize that the locally predicted (but globally effective) error reduction can be computed at a negligible cost.  Our $hp$-adaptive algorithm passes through all elements of a current mesh (which can be done in parallel), and compares the predicted error reductions for different $p$-enrichments and $hp$-refinements in order to find an optimal enrichment on each element. Then, by using an appropriate marking strategy,  all those elements, from which the most substantial (global) error reduction can be expected, will be refined.

The paper is structured as follows: In \S\ref{sec:abstract_form} we begin by introducing an abstract  framework for linear elliptic problems in Hilbert spaces, and derive some general results on (global) error reductions, which are exploited to devise the error prediction strategy (in terms of low-dimensional modifications) in our new $hp$-adaptive scheme. In \S\ref{sec:appl_to_hp_FEM}, the abstract setting is applied to $hp$-FEM, whereby we specify two types of enrichment functions on each element of a given $hp$-mesh, namely $p$- and $hp$-enrichment functions; in particular, we present in detail the explicit construction of such enrichments on individual elements based on a recent constraint coefficients technique, see~\cite{schroeder:2008, byfut:2017}. Furthermore, the proposed $hp$-adaptive procedure is outlined in terms of a general algorithm in~\S\ref{sec:adaptive_alg}, and some numerical experiments for $hp$-type finite element discretizations in 1D and 2D, which illustrate the effectiveness of our approach, will be presented in~\S\ref{sec:numerical_examples}. 

Throughout this article, let $\NN = \lbrace 1,2,\ldots\rbrace$ denote the positive integers and set $\NN_0 := \NN \cup \lbrace 0\rbrace$. For any $n\in\NN$ we write $\underline{n}$ and $\underline{n}_0$ for the finite sets $\lbrace 1,2,\ldots, n\rbrace$ and $\lbrace 0,1,\ldots, n\rbrace$, respectively.

	
\section{Abstract framework}\label{sec:abstract_form}

On a (real) Hilbert space $\XX$ we consider a bounded, symmetric and coercive bilinear form $a:\XX\times\XX\to\RR$ that induces a norm $v\mapsto\norm{v} := a(v,v)^{\nicefrac12}$ on $\XX$. We also introduce a bounded linear form $b: \XX\to\RR$. 
Then, for any closed subspace $\WW\subseteq\XX$, by the Riesz representation theorem, there is a unique $u_{\WW}\in\WW$ such that the weak formulation
\begin{align}\label{eq:solW}
 a(u_{\WW},w)
 = b(w) \qquad\forall \, w\in\WW
\end{align}
holds true. Equivalently, defining the residual
\begin{align}\label{eq:res}
 \res{v} 
 := b(v) - a(u_{\WW},v), \qquad v\in\XX,
\end{align}
we have $\res{v} = 0$ for all $v\in\WW$. Furthermore, in line with the above notation, we signify by $u_{\XX}\in\XX$ the (full space) solution of the weak formulation
\begin{align}\label{eq:fullspace}
 a(u_{\XX},v)
 = b(v) \qquad \forall \, v\in\XX.
\end{align}


\subsection{Low-dimensional enrichments}\label{subsec:low_dim_enrichments}

Consider the special case where the subspace $\WW\subseteq\XX$ introduced above is spanned by finitely many basis functions $\phi_1,\ldots,\phi_N\in\WW$, with $N\ge 1$, i.e.
\[
\WW=\operatorname{span}\{ \phi_1,\ldots,\phi_N \},\qquad \operatorname{dim}(\WW)=N<\infty.
\] 
Moreover, for an index subset $\Iloc\subset\underline{N}$, we let
\begin{equation}\label{eq:Wloc}
\WWloc:=\operatorname{span}\big\lbrace\phi_i:\,i\in\Iloc \big\rbrace\subset\WW;
\end{equation}
for instance, in the specific context of $hp$-discretizations to be discussed later on, the spaces $\WW$ and $\WWloc$ will take the roles of an $hp$-finite element space and a \emph{locally supported} subspace, respectively. We can then define a linear projection operator
\begin{align}\label{eq:linear_projOp}
\Ploc:\,\WW\to\WWloc,\qquad
v = \sum_{i\in\underline{N}} v_i \, \phi_i \mapsto \Ploc v := \sum_{i\in\Iloc} v_i \, \phi_i,
\end{align}
which enables a decomposition of the solution $u_{\WW}\in\WW$ of~\eqref{eq:solW} into a low dimensional and a remaining part 
\[
\uloc:=\Ploc u_{\WW}\in\WWloc,\qquad
\ut:=u_{\WW}-\Ploc u_{\WW},
\]
respectively, with 
\begin{equation}\label{eq:dw}
u_{\WW} = \uloc + \ut. 
\end{equation}
We aim to improve the approximation $u_{\WW}$ of the (full space) solution $u_{\XX}\in\XX$ of~\eqref{eq:fullspace} by enriching or replacing the local space $\WWloc$ by a subspace
\begin{align}\label{eq:Y}
 \YY
 := \operatorname{span}\{ \ut,\xi_1,\ldots,\xi_L \} \subseteq \XX
\end{align}
of  dimension $L+1$, where
\begin{align}\label{eq:xi}
\myvec{\xi}:=\{ \xi_1,\ldots,\xi_L \}\subseteq\XX
\end{align} 
is a (small) set of linearly independent elements in $\XX$. If $\WWloc\subset\YY$ then we call $\YY$ a \emph{local enrichment space}, otherwise $\YY$ is a \emph{local replacement space}; the functions $\xi_1,\ldots,\xi_L$ will in both cases be referred to as \emph{(local) enrichment functions}.  

We now introduce the low-dimensional problem
\begin{align}\label{eq:solY}
 u_{\YY}\in\YY : \qquad 
 a(u_{\YY},v) = b(v) \qquad \forall \, v\in \YY.
\end{align}
Since $\dim\YY = L+1$, there is a unique $\epsilon\in\RR$ and some uniquely determined $y_{\myvec{\xi}}\in \operatorname{span}\{ \xi_1,\ldots,\xi_L \}$ such that we can express the solution $u_{\YY}$ of~\eqref{eq:solY} in terms of a linear combination
\begin{align}\label{eq:uY}
 u_{\YY}
 =(1+\epsilon) \, \ut + y_{\myvec{\xi}}.
\end{align}
Then, defining the errors
\begin{align*}
 e_{\WW} := u_{\XX} - u_{\WW},
 \qquad
 e_{\YY} := u_{\XX} - u_{\YY},
\end{align*}
we prove the following result for the \emph{predicted error reduction} $\Delta e_{\WW,\YY}$, given by
\begin{align*}
 \Delta e_{\WW,\YY}^2 := \norm{e_{\WW}}^2 - \norm{e_{\YY}}^2.
\end{align*}

\begin{proposition}[Predicted error reduction]\label{pr:res}
For the predicted error reduction we have the identities
\begin{align}\label{eq:errid}
 \Delta e_{\WW,\YY}^2
=
\norm{u_{\YY}-u_{\WW}}^2
-2 \, \res[\YY]{\uloc}
 =\res{y_{\myvec{\xi}}}
 -\res[\YY]{\uloc},
\end{align}
with~$y_{\myvec{\xi}}\in\operatorname{span}\{ \xi_1,\ldots,\xi_L \}$ from~\eqref{eq:uY}, the residual~$\res{\cdot}$ defined in~\eqref{eq:res}, and $\res[\YY]{\cdot}:=b(\cdot) - a(u_{\YY},\cdot)$. 
\end{proposition}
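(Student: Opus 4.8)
The plan is to exploit the Galerkin orthogonality enjoyed by both discrete solutions $u_{\WW}$ and $u_{\YY}$ together with the defining property $a(u_{\XX},v)=b(v)$ of the full-space solution in~\eqref{eq:fullspace}. Throughout I would write $d:=u_{\YY}-u_{\WW}$ and record at the outset the subspace memberships that drive the entire computation: by~\eqref{eq:dw} and~\eqref{eq:Y} the element $\ut$ lies in both $\WW$ and $\YY$, whereas $\yxi\in\operatorname{span}\{\xi_1,\ldots,\xi_L\}\subseteq\YY$ and $\uloc\in\WW$. The point that must be tracked with care is that $\yxi$ need not belong to $\WW$, and $\uloc$ need not belong to $\YY$ (the latter precisely in the replacement case); consequently the residuals $\res{\yxi}$ and $\res[\YY]{\uloc}$ do not vanish and are exactly what will survive.

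For the first identity in~\eqref{eq:errid} I would start from the difference-of-squares factorization in the energy inner product,
\[
\Delta e_{\WW,\YY}^2 = \norm{e_{\WW}}^2 - \norm{e_{\YY}}^2 = a\big(e_{\WW}-e_{\YY},\, e_{\WW}+e_{\YY}\big),
\]
which is valid by the symmetry of $a$. Since $e_{\WW}-e_{\YY}=u_{\YY}-u_{\WW}=d$ and $e_{\WW}+e_{\YY}=2\,e_{\YY}+d$, this becomes $\Delta e_{\WW,\YY}^2=\norm{d}^2+2\,a(d,e_{\YY})$, and the first summand is already $\norm{u_{\YY}-u_{\WW}}^2$. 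To treat the cross term I would split $d=(u_{\YY}-\ut)-\uloc$; since $u_{\YY}-\ut\in\YY$ and $e_{\YY}=u_{\XX}-u_{\YY}$ is $a$-orthogonal to $\YY$ by Galerkin orthogonality, that contribution drops out and leaves $a(d,e_{\YY})=-a(\uloc,u_{\XX}-u_{\YY})$. Invoking $a(u_{\XX},\uloc)=b(\uloc)$ from~\eqref{eq:fullspace} together with symmetry collapses this to $a(d,e_{\YY})=-\big(b(\uloc)-a(u_{\YY},\uloc)\big)=-\res[\YY]{\uloc}$, which yields the first equality.

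For the second identity I would instead compute $\norm{d}^2$ directly. Subtracting~\eqref{eq:dw} from~\eqref{eq:uY} gives $d=\epsilon\,\ut+\yxi-\uloc$, so that $\norm{d}^2=a(d,\epsilon\,\ut+\yxi-\uloc)$ splits into three inner products, each handled by applying the correct orthogonality relation, namely $a(u_{\YY},v)=b(v)$ for $v\in\YY$ from~\eqref{eq:solY} and $a(u_{\WW},w)=b(w)$ for $w\in\WW$ from~\eqref{eq:solW}. Testing against $\ut\in\WW\cap\YY$ makes both contributions equal $b(\ut)$, so $a(d,\ut)=0$; testing against $\yxi\in\YY$ gives $a(d,\yxi)=b(\yxi)-a(u_{\WW},\yxi)=\res{\yxi}$, where the $u_{\WW}$-term does \emph{not} simplify because $\yxi\notin\WW$; and testing against $\uloc\in\WW$ gives $a(d,\uloc)=a(u_{\YY},\uloc)-b(\uloc)=-\res[\YY]{\uloc}$. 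Collecting these produces $\norm{d}^2=\res{\yxi}+\res[\YY]{\uloc}$, and substituting this into the first identity delivers $\Delta e_{\WW,\YY}^2=\res{\yxi}-\res[\YY]{\uloc}$, as claimed.

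The only genuinely delicate aspect is the bookkeeping of memberships just described: because $\YY$ contains $\ut$ but in general not $\uloc$, and contains $\yxi$ which in general lies outside $\WW$, one must apply each of the two Galerkin relations only to the factor for which it is legitimate. Once this is respected, the residuals $\res{\yxi}$ and $\res[\YY]{\uloc}$ emerge as precisely the non-vanishing remainders, and everything else reduces to elementary manipulations in the energy inner product.
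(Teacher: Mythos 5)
Your proof is correct and follows essentially the same route as the paper's: the same decompositions $u_{\YY}-u_{\WW}=(u_{\YY}-\ut)-\uloc$ and $u_{\YY}-u_{\WW}=\epsilon\,\ut+\yxi-\uloc$, the same Galerkin orthogonality relations tested against $\ut$, $\yxi$, and $\uloc$, and the same use of the full-space equation to produce $-\res[\YY]{\uloc}$ in the cross term. The only cosmetic difference is that you obtain the first identity via the difference-of-squares factorization $a(e_{\WW}-e_{\YY},\,e_{\WW}+e_{\YY})$ rather than by expanding $\norm{e_{\YY}+(u_{\YY}-u_{\WW})}^2=\norm{e_{\WW}}^2$, which is an equivalent computation.
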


\begin{proof}
We begin by noting the identity $e_{\YY}+(u_{\YY}-u_{\WW})=e_{\WW}$, 
which implies that $\norm{e_{\YY}+(u_{\YY}-u_{\WW})}^2
=\norm{e_{\WW}}^2$.
Thus, using that the bilinear form $a(\cdot,\cdot)$ induces the norm $\norm{\cdot}$, we deduce that
\begin{align*}
\norm{e_{\YY}}^2
+2 \, a(e_{\YY},u_{\YY}-u_{\WW})
+\norm{u_{\YY}-u_{\WW}}^2
=
\norm{e_{\WW}}^2.
\end{align*}
By Galerkin orthogonality, exploiting that $u_{\YY}-\ut\in\YY$ and $\uloc\in\WW$, we observe that
\[
a(e_{\YY},u_{\YY}-\ut)=0\qquad\text{and}\qquad
a(e_{\WW},\uloc)=0.
\]
Hence, recalling \eqref{eq:dw} we obtain
\[
a(e_{\YY},u_{\YY}-u_{\WW})
=a(e_{\YY},u_{\YY}-\ut)-a(e_{\YY},\uloc)
=-a(e_{\YY},\uloc).
\]
Furthermore, invoking~\eqref{eq:fullspace}, it follows that
\[
a(e_{\YY},u_{\YY}-u_{\WW})
=-a(u_{\XX},\uloc)+a(u_{\YY},\uloc)
=-b(\uloc)+a(u_{\YY},\uloc)
=-\res[\YY]{\uloc}.
\]
Thus,
\[
\norm{e_{\YY}}^2
-2 \, \res[\YY]{\uloc}
+\norm{u_{\YY}-u_{\WW}}^2
=
\norm{e_{\WW}}^2,
\]
which yields the first identity in~\eqref{eq:errid}. 
Moreover, recalling~\eqref{eq:dw} and~\eqref{eq:uY}, we have $u_{\YY}-u_{\WW}
=\epsilon \, \ut+y_{\myvec{\xi}}-\uloc$, and since $\ut\in\WW\cap\YY$, we notice the orthogonality property
\[
a(u_{\YY}-u_{\WW},\ut)=0.
\]
Thus,
\begin{align*}
\norm{u_{\YY}-u_{\WW}}^2
&=a(u_{\YY}-u_{\WW},u_{\YY}-u_{\WW})\\
&=a(u_{\YY}-u_{\WW},\epsilon \, \ut+y_{\myvec{\xi}}-\uloc)\\
&=a(u_{\YY}-u_{\WW},y_{\myvec{\xi}})
-a(u_{\YY}-u_{\WW},\uloc).
\end{align*}
Then, employing~\eqref{eq:solY} with the test function $y_{\myvec{\xi}}\in\YY$, we have
\[
a(u_{\YY}-u_{\WW},y_{\myvec{\xi}})=b(y_{\myvec{\xi}})-a(u_{\WW},y_{\myvec{\xi}})=\res[\WW]{y_{\myvec{\xi}}}.
\]
Similarly, applying~\eqref{eq:solW} with the test function~$\uloc\in\WW$, we arrive at
\[
-a(u_{\YY}-u_{\WW},\uloc)
=-a(u_{\YY},\uloc)+b(\uloc)
=\res[\YY]{\uloc}.
\]
Combining the above equalities gives the second identity in~\eqref{eq:errid}.
\end{proof}

\begin{remark}[Energy reduction property]
If the space $\YY$ is a local enrichment of $\WWloc$, i.e. if $\WWloc\subset\YY$, then $\res[\YY]{\uloc}=0$, and the identities~\eqref{eq:errid} simplify to
\begin{align*}
 \Delta e_{\WW,\YY}^2
 = \norm{u_{\YY}-u_{\WW}}^2
 =\res{y_{\myvec{\xi}}};
\end{align*}
in particular, in this case, the error resulting from the local enrichment will \emph{not increase}, or will even \emph{decrease} (i.e.~$\norm{e_{\WW}}^2 > \norm{e_{\YY}}^2$) if $u_{\YY}\neq u_{\WW}$. In analogous terms, introducing the energy functional 
\begin{align*}
 \E(v) := \frac{1}{2} \, a(v,v) - b(v),\qquad v\in\XX,
\end{align*}
and recalling the Dirichlet variational principle, we note that
\begin{align*}
 \E(u_{\WW}) = \min_{w\in\WW} \E(w)\qquad\text{and}\qquad
 \E(u_{\YY}) = \min_{v\in\YY} \E(v),
\end{align*}
whence we immediately deduce the \emph{energy reduction} property $\E(u_{\YY}) \leq \E(u_{\WW})$ for the solutions $u_{\WW}\in\WW$ and $u_{\YY}\in\YY$ from~\eqref{eq:solW} and~\eqref{eq:solY}, respectively. In the general case, when $\YY$ is a local replacement space, a minor modification of the proof of Proposition~\ref{pr:res} yields the identity
\begin{align*}
 \Delta e_{\WW,\YY}^2
 = \norm{u_{\YY}-\ut}^2-\norm{\uloc}^2,
\end{align*}
which shows that the error is guaranteed to decrease, i.e. $\Delta e_{\WW,\YY}^2 > 0$, whenever $\norm{u_{\YY}-\ut}^2 > \norm{\uloc}^2$.
\end{remark}


\subsection{Linear algebra representation}

We will now illustrate how the difference of the residuals $\res{y_{\myvec{\xi}}}$ and $\res[\YY]{\uloc}$ from Proposition~\ref{pr:res} can be computed by means of linear algebra. To this end, for $\{\xi_1,\ldots,\xi_L\}$ from~\eqref{eq:xi}, we introduce the matrix $\mymatrix{A}=(A_{ij})\in\RR^{L\times L}$ with entries
\begin{subequations}\label{eq:localq}
\begin{align}
 A_{ij}
 &:= a(\xi_j,\xi_i), \qquad i,j\in\underline{L},
\intertext{and the (column) vectors $\myvec{b},\myvec{c}\in\RR^L$, which are given component-wise by}
 b_i& := b(\xi_i), \quad 
 c_i := a(\ut,\xi_i), 
\qquad i\in\underline{L},
\intertext{respectively. Furthermore, we let}
a_{00}&:=\norm{\uWW}^2-\norm{\uloc}^2 - 2 \, \delta,
\qquad\text{with}\quad
\delta := b(\uloc)-\norm{\uloc}^2.
\end{align}
\end{subequations}

\begin{remark}
In the finite element context, where the functions $\xi_1,\ldots,\xi_L$ and $\uloc$ are supposedly local, we notice that all of the quantities defined in~\eqref{eq:localq} are inexpensive to compute (except for the global term $\norm{\uWW}$ which, however, needs to be evaluated once only).
\end{remark}

\begin{proposition}[Low-dimensional computation of error reductions]\label{pr:repres_error}
Suppose that $\dim\YY = L+1$ (in particular, we implicitly assume that $\ut\neq 0$), and consider the (symmetric) linear system
\begin{equation}\label{eq:linsys}
\begin{pmatrix}
a_{00} & \myvec{c}^\top\\
\myvec{c} & \myvec{A}
\end{pmatrix}
\begin{pmatrix}
\epsilon\\ \myvec{y}
\end{pmatrix}
=
\begin{pmatrix}
\delta\\
\myvec{b}-\myvec{c}
\end{pmatrix},
\end{equation}
with the solution components $\epsilon\in\mathbb{R}$ and~$\myvec{y}\in\mathbb{R}^L$. Then, the predicted error change from Proposition~\ref{pr:res} can be computed by means of the formula
\begin{align}\label{eq:formula}
 \Delta e_{\WW,\YY}^2
 =\myvec{y}^\top(\myvec{b}-\myvec{c})-\norm{\uloc}^2+\epsilon \, \delta.
 \end{align}
\end{proposition}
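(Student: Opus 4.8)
The plan is to expand the solution $u_\YY$ in the basis $\{\ut,\xi_1,\ldots,\xi_L\}$ of $\YY$, namely $u_\YY = (1+\epsilon)\ut + \sum_{j\in\underline L} y_j\,\xi_j$ (so that $\yxi = \sum_{j} y_j\,\xi_j$), substitute this ansatz into the low-dimensional weak formulation~\eqref{eq:solY}, and read off the resulting linear system by testing against each basis function; then I would translate the residual expression from Proposition~\ref{pr:res} into the stated quadratic formula.

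First I would derive~\eqref{eq:linsys}. Testing~\eqref{eq:solY} against $\xi_i$, $i\in\underline L$, gives $(1+\epsilon)\,a(\ut,\xi_i) + \sum_j y_j\,a(\xi_j,\xi_i) = b(\xi_i)$, i.e.\ $(1+\epsilon)c_i + (\myvec{A}\myvec{y})_i = b_i$, which rearranges to $\epsilon\,c_i + (\myvec{A}\myvec{y})_i = b_i - c_i$; this is exactly the second block-row $\myvec{c}\,\epsilon + \myvec{A}\myvec{y} = \myvec{b}-\myvec{c}$. For the first block-row I would test~\eqref{eq:solY} against $\ut\in\YY$. Since $\ut\in\WW$, the weak formulation~\eqref{eq:solW} gives $a(\uWW,\ut)=b(\ut)$, and using $\uWW=\uloc+\ut$ one obtains $a(\ut,\ut)=b(\ut)-a(\uloc,\ut)$. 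Testing $u_\YY$ against $\ut$ yields $(1+\epsilon)\norm{\ut}^2 + \sum_j y_j\,a(\xi_j,\ut) = b(\ut)$, and the task is to massage the constants into the stated $a_{00}$ and $\delta$; here I would rewrite $\norm{\ut}^2 = a(\uWW-\uloc,\ut) = b(\ut)-a(\uloc,\uWW)$ and use $a(\uloc,\uWW)=\norm{\uloc}^2+a(\uloc,\ut)$ together with $\delta=b(\uloc)-\norm{\uloc}^2$ to confirm that the first equation collapses to $a_{00}\,\epsilon + \myvec{c}^\top\myvec{y} = \delta$ with $a_{00}=\norm{\uWW}^2-\norm{\uloc}^2-2\delta$. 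This bookkeeping—matching the Hilbert-space inner products against the algebraic constants—is the main obstacle, since several seemingly different combinations of $b(\cdot)$ and $a(\cdot,\cdot)$ terms must be shown to coincide via the Galerkin identities.

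Next I would establish~\eqref{eq:formula}. By Proposition~\ref{pr:res}, $\Delta e_{\WW,\YY}^2 = \res{\yxi} - \res[\YY]{\uloc}$. For the first term, $\res{\yxi} = b(\yxi) - a(\uWW,\yxi) = \sum_j y_j\big(b(\xi_j)-a(\uWW,\xi_j)\big)$; since $\uWW=\uloc+\ut$ and $a(\uloc,\xi_j)$ together with $a(\ut,\xi_j)=c_j$ appear, I expect this to reduce to $\myvec{y}^\top(\myvec{b}-\myvec{c})$ after accounting for $a(\uloc,\xi_j)$, which I would absorb or track carefully. For the second term, $\res[\YY]{\uloc}=b(\uloc)-a(u_\YY,\uloc)$; substituting $u_\YY=(1+\epsilon)\ut+\yxi$ gives $b(\uloc)-(1+\epsilon)a(\ut,\uloc)-a(\yxi,\uloc)$, and using $\delta=b(\uloc)-\norm{\uloc}^2$ I would reorganize this into $-\norm{\uloc}^2+\epsilon\,\delta$ plus cross terms.

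Finally I would combine the two contributions and verify that the cross terms involving $a(\uloc,\yxi)$ and $a(\uloc,\ut)$ cancel, leaving precisely $\myvec{y}^\top(\myvec{b}-\myvec{c})-\norm{\uloc}^2+\epsilon\,\delta$. The cleanest route may be to avoid computing the two residuals separately and instead observe that $\Delta e_{\WW,\YY}^2$ equals the energy-type quadratic form $\begin{pmatrix}\epsilon & \myvec{y}^\top\end{pmatrix}$ times the right-hand side of~\eqref{eq:linsys}, i.e.\ that~\eqref{eq:formula} is exactly $\epsilon\,\delta + \myvec{y}^\top(\myvec{b}-\myvec{c})-\norm{\uloc}^2$, which is the inner product of the solution vector with the load vector minus the constant $\norm{\uloc}^2$. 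This reframing would let me appeal directly to the linear system~\eqref{eq:linsys} rather than recomputing each residual from scratch.
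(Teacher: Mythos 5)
Your proposal is correct and takes essentially the same route as the paper's proof: you derive the two block rows of \eqref{eq:linsys} by testing \eqref{eq:solY} with $\ut$ and with each $\xi_i$ (the first row collapsing via the identities $\norm{\ut}^2=a_{00}$ and $b(\ut)-\norm{\ut}^2=\delta$), and you obtain \eqref{eq:formula} from the second identity in \eqref{eq:errid}, with the symmetric cross terms $a(\uloc,\yxi)$ and $a(\yxi,\uloc)$ cancelling and $\epsilon\,a(\ut,\uloc)=\epsilon\,\delta$ supplying the remaining term, exactly as in the paper. One minor bookkeeping slip: $\norm{\ut}^2=b(\ut)-a(\uloc,\ut)=b(\ut)-a(\uloc,\uWW)+\norm{\uloc}^2$, not $b(\ut)-a(\uloc,\uWW)$, but your own subsequent identity $a(\uloc,\uWW)=\norm{\uloc}^2+a(\uloc,\ut)$ already repairs this.
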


\begin{proof}
Recalling the decomposition~\eqref{eq:dw}, we begin by noticing that
\[
\norm{\ut}^2
=\norm{\uWW-\uloc}^2
=\norm{\uWW}^2+\norm{\uloc}^2 - 2 \, a(\uWW,\uloc),
\]
which, upon applying~\eqref{eq:solW} with the test function $v=\uloc$, results in
\begin{equation}\label{eq:a00}
\norm{\ut}^2
=
\norm{\uWW}^2+\norm{\uloc}^2 - 2 \, b(\uloc)=a_{00}.
\end{equation}
Similarly, with $v=\ut$ in~\eqref{eq:solW}, we infer that
\begin{align*}
b(\ut)-a(\ut,\ut)
&=a(\uWW,\ut)-a(\ut,\ut)
=a(\uloc,\ut)
=a(\ut,\uloc)
=a(\uWW,\uloc)-a(\uloc,\uloc)\\
&=b(\uloc)-\norm{\uloc}^2,
\end{align*}
from which we arrive at
\begin{equation}\label{eq:d}
\delta = b(\ut)-a(\ut,\ut)=b(\ut)-\norm{\ut}^2.
\end{equation}
Then, using~\eqref{eq:solY} with~$v=\ut\in\YY$, and applying the representation~\eqref{eq:uY}, leads to
\[
\delta =
b(\ut)-a(\ut,\ut)
=a(u_{\YY},\ut)-a(\ut,\ut)
=\epsilon \, a(\ut,\ut)+a(\yxi,\ut)
=\epsilon \, \norm{\ut}^2+a(\yxi,\ut),
\]
and equivalently, due to~\eqref{eq:a00},
\begin{equation}\label{eq:linsys1}
\epsilon \,  a_{00}+a(y_{\myvec{\xi}},\ut) = \delta.
\end{equation}
In addition, for $i\in\underline{L}$, testing~\eqref{eq:solY} with $v = \xi_i$, and exploiting~\eqref{eq:uY}, 
yields
\begin{equation}\label{eq:linsys2}
(1+\epsilon) \, a(\ut,\xi_i)+a(y_{\myvec{\xi}},\xi_i)=b(\xi_i)\qquad\forall \, i\in\underline{L}.
\end{equation}
Therefore, applying the linear combination 
\begin{equation}\label{eq:liny}
y_{\myvec{\xi}} = \sum_{j\in\underline{L}} y_j \, \xi_j,
\end{equation} 
for a uniquely defined coefficient vector $\myvec{y} := (y_1,\ldots,y_L)^{\top}\in\RR^L$, the equations~\eqref{eq:linsys1} and~\eqref{eq:linsys2} transform into
\begin{align*}
\epsilon \, a_{00}+\sum_{j\in\underline{L}} a(\ut,\xi_j) \, y_j &= \delta,
\intertext{and}
\epsilon \, a(\ut,\xi_i) + \sum_{j\in\underline{L}} a(\xi_j,\xi_i) \, y_j
 &= b(\xi_i) -a(\ut,\xi_i) \qquad \forall \, i\in\underline{L},
\end{align*}
which is the linear system~\eqref{eq:linsys}.
Moreover, owing to~\eqref{eq:dw}, it holds that
\[
\res{y_{\myvec{\xi}}}
=b(y_{\myvec{\xi}})-a(u_{\WW},y_{\myvec{\xi}})
=b(y_{\myvec{\xi}})-a(\ut,y_{\myvec{\xi}})-a(\uloc,y_{\myvec{\xi}}).
\]
In addition, invoking~\eqref{eq:uY} and~\eqref{eq:solW}, we have
\begin{align*}
-\res[\YY]{\uloc}
&=a(u_{\YY},\uloc)-b(\uloc)\\
&=(1+\epsilon) \,  a(\ut,\uloc)+a(\yxi,\uloc)-a(\uWW,\uloc)\\
&=-a(\uloc,\uloc) + \epsilon \, a(\ut,\uloc)+a(\yxi,\uloc)\\
&=-\norm{\uloc}^2+\epsilon \Big( a(\uWW,\ut)-a(\ut,\ut) \Big) +a(\yxi,\uloc)\\
&=-\norm{\uloc}^2+\epsilon \Big( b(\ut)-\norm{\ut}^2 \Big) + a(\yxi,\uloc).
\end{align*}
Recalling~\eqref{eq:d} implies that 
\begin{align*}
 -\res[\YY]{\uloc}
=-\norm{\uloc}^2+\epsilon \, \delta + a(\yxi,\uloc).
\end{align*}
Thus, upon involving the linear combination~\eqref{eq:liny}, we finally obtain
\begin{align*}
 \res{y_{\myvec{\xi}}} - \res[\YY]{\uloc}
 &= b(y_{\myvec{\xi}}) - a(\ut,y_{\myvec{\xi}})
- \norm{\uloc}^2 + \epsilon \, \delta 
 = \sum_{j\in\underline{L}} y_j \, \big( b(\xi_j) - a(\ut,\xi_j) \big) - \norm{\uloc}^2 + \epsilon \, \delta,
\end{align*}
which, in view of~\eqref{eq:errid}, leads to the asserted identity~\eqref{eq:formula}.
\end{proof}


\subsection{Assembling aspects}\label{subsec:assembling_aspects}

We aim to apply the abstract framework developed in the previous sections to the finite element context. For this purpose, we let $\XX=\XX(\Omega)$ be a Hilbert function space defined over a bounded open domain $\Omega\subset\mathbb{R}^d$, $d\ge 1$. We present the assembling of the matrix $\mymatrix{A}$ and the vectors $\myvec{b}$, $\myvec{c}$  occurring in the linear system~\eqref{eq:linsys} in the specific case where $\WW\subseteq\XX$ is a subspace of finite dimension $N := \dim\WW$ with the basis $\{ \phi_1,\ldots,\phi_N \}$. Let $\D$ be a decomposition of $\Omega$ into closed subsets $K\subseteq\Omega$, i.e.
\begin{align*}
 \overline{\Omega} = \bigcup_{K\in\D} K,
\end{align*} 
where $\operatorname{int}(K)\cap \operatorname{int}(K')=\emptyset$ for any $K,K'\in\D$ with $K\neq K'$. For any $K\in\D$, consider the (local) Hilbert space consisting of all restrictions of functions $v\in\XX$ to $K$, i.e.~$\XX_K := \{ v_{ | \, K} : v\in\XX \}$. Moreover, let $\big\{ \zeta_1^K,\ldots,\zeta_{M_K}^K \big\}\subseteq \XX_K$ be a set of functions such that there exist representation matrices $\mymatrix{C}_K = \big(\, c_{ij}^K \, \big)\in\RR^{N\times M_K}$ and $\mymatrix{D}_K = \big( \, d_{ij}^K \, \big)\in\RR^{L\times M_K}$ satisfying
\begin{align}
  \phi_{i \, | \, K} &= \sum_{j\in\underline{M_K}} c_{ij}^K \, \zeta_j^K, \qquad i\in\underline{N}, \label{eq:rep_phi_xi_01} \\
  \xi_{i \, | \, K}  &=  \sum_{\in\underline{M_K}} d_{ij}^K \, \zeta_j^K, \qquad i\in\underline{L}, \label{eq:rep_phi_xi_02}
\end{align}
with $\{\xi_1,\ldots,\xi_L\}$ from~\eqref{eq:xi}. Finally, let the bilinear form $a(\cdot,\cdot)$ as well as the linear form $b(\cdot)$ be decomposable in the sense that
\begin{align}\label{eq:dec_a_b}
 a(v,w) = \sum_{K\in\D} a_K\big( v_{ | \, K}, w_{ | \, K} \big), \qquad
 b(v) = \sum_{K\in\D} b_K\big( v_{ | \, K} \big)
 \qquad \forall \, v,w\in\XX,
\end{align}
for some (local) bilinear forms $a_K : \XX_K\times\XX_K\to\RR$ and (local) linear forms $b_K : \XX_K\to\RR$. For any $K\in\D$ we introduce the (local) matrix $\mymatrix{A}_K = \big(\, A_{ij}^K \, \big)\in\RR^{M_K\times M_K}$ with entries
\begin{align*}
 A_{ij}^K := a_K \big(\zeta_j^K, \zeta_i^K \big), \qquad i,j\in\underline{M_K},
\end{align*}
and the (local) vector $\myvec{b}_K = \big(b_i^K\big)\in\RR^{M_K}$, which is given component-wise by
\begin{align*}
 b_i^K := b_K\big(\zeta_i^K \big), \qquad i\in\underline{M_K}.
\end{align*}

Let $\wt{\myvec{u}} = (u_1,\ldots,u_N)^{\top}\in\RR^N$ denote the uniquely determined coefficient vector of $\ut$ from~\eqref{eq:dw} with respect to the basis $\{ \phi_1,\ldots,\phi_N \}$ of $\WW$, i.e. $\ut = \sum_{i\in\underline{N}} u_i \, \phi_i$. Note that $u_i=0$ for $i\in\Iloc$. Then the matrix $\mymatrix{A} \in\RR^{L\times L}$ and the vectors $\myvec{b},\myvec{c}\in\RR^L$ from the linear system \eqref{eq:linsys} can be assembled by the (local) quantities $\mymatrix{A}_K,\mymatrix{C}_K, \mymatrix{D}_K$, and $\myvec{b}_K$ as the following result shows.

\begin{proposition}[Assembling]\label{prop:assembling}
The identities
\begin{align}\label{eq:assembling_1}
 \mymatrix{A} = \sum_{K\in\D} \mymatrix{D}_K \mymatrix{A}_K \mymatrix{D}_K^{\top}, \qquad
 \myvec{b} = \sum_{K\in\D} \mymatrix{D}_K \myvec{b}_K,\qquad
 \myvec{c} = \sum_{K\in\D} \mymatrix{D}_K \mymatrix{A}_K \mymatrix{C}_K^{\top} \wt{\mymatrix{u}}
\end{align}
hold true.
\end{proposition}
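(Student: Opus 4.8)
The plan is to establish each of the three identities in~\eqref{eq:assembling_1} by a direct computation that starts from the defining expressions~\eqref{eq:localq} for the entries of $\mymatrix{A}$, $\myvec{b}$, and $\myvec{c}$, inserts the decomposability~\eqref{eq:dec_a_b} of $a(\cdot,\cdot)$ and $b(\cdot)$ over the cells $K\in\D$, and then substitutes the local representations~\eqref{eq:rep_phi_xi_01}--\eqref{eq:rep_phi_xi_02} of the restricted basis and enrichment functions in terms of the local functions $\zeta_j^K$. The only genuine effort lies in keeping track of indices and transposes so that the resulting finite sums collapse into the stated matrix products; there is no analytic obstacle, since finite-dimensionality makes all interchanges of sums legitimate.

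For the matrix $\mymatrix{A}$, I would begin from $A_{ij} = a(\xi_j,\xi_i)$, apply~\eqref{eq:dec_a_b} to rewrite this as $\sum_{K\in\D} a_K(\xi_{j \, | \, K},\xi_{i \, | \, K})$, and then replace $\xi_{i \, | \, K}$ and $\xi_{j \, | \, K}$ by their expansions~\eqref{eq:rep_phi_xi_02}. Bilinearity turns each cell contribution into $\sum_{m,n} d_{im}^K\, a_K(\zeta_n^K,\zeta_m^K)\, d_{jn}^K$; recognizing that $a_K(\zeta_n^K,\zeta_m^K)=A_{mn}^K$ (note the index order in the definition of $\mymatrix{A}_K$), this is precisely the $(i,j)$ entry of $\mymatrix{D}_K \mymatrix{A}_K \mymatrix{D}_K^{\top}$, which yields the first claim. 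The identity for $\myvec{b}$ is the same computation with one expansion removed: from $b_i=b(\xi_i)$, decomposability and~\eqref{eq:rep_phi_xi_02} give $b_i=\sum_{K\in\D}\sum_m d_{im}^K\, b_K(\zeta_m^K)=\sum_{K\in\D} (\mymatrix{D}_K \myvec{b}_K)_i$.

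The third identity requires the additional ingredient that $\ut = \sum_{k\in\underline{N}} u_k\,\phi_k$, with coefficient vector $\wt{\myvec{u}}$. Starting from $c_i = a(\ut,\xi_i)$, I would expand $\ut$, use bilinearity together with~\eqref{eq:dec_a_b} to reach $\sum_k u_k \sum_{K\in\D} a_K(\phi_{k \, | \, K},\xi_{i \, | \, K})$, and then insert both representations~\eqref{eq:rep_phi_xi_01} and~\eqref{eq:rep_phi_xi_02}. The cell term becomes $\sum_{m,n} d_{im}^K\, A_{mn}^K\, c_{kn}^K$, and contracting first over $k$ (producing $(\mymatrix{C}_K^{\top}\wt{\myvec{u}})_n$), then over $n$ and $m$, reorganizes the total as $\sum_{K\in\D}(\mymatrix{D}_K \mymatrix{A}_K \mymatrix{C}_K^{\top} \wt{\myvec{u}})_i$, which proves the last identity. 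The main point to watch throughout is the correct placement of transposes: because $\mymatrix{A}_K$ is indexed as $A_{ij}^K=a_K(\zeta_j^K,\zeta_i^K)$, while $\mymatrix{C}_K,\mymatrix{D}_K$ carry the global/enrichment index first, one must confirm that the contraction over the local index produces $\mymatrix{C}_K^{\top}$ (respectively $\mymatrix{D}_K^{\top}$) rather than the untransposed matrix. The symmetry of $a(\cdot,\cdot)$ renders $\mymatrix{A}_K$ symmetric, which removes any ambiguity in the order of the two $\zeta$-arguments and confirms the consistency of the index bookkeeping.
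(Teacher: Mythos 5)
Your proof is correct and is essentially the paper's own argument: a componentwise computation combining the decomposability~\eqref{eq:dec_a_b} with the representations~\eqref{eq:rep_phi_xi_01}--\eqref{eq:rep_phi_xi_02}, differing only in that you run each calculation from the defining entries toward the matrix products (the paper goes in the opposite direction) and that you write out the third identity, which the paper dismisses as ``analogous.'' One incidental caveat: your closing remark that symmetry of $a(\cdot,\cdot)$ renders each $\mymatrix{A}_K$ symmetric is not strictly implied (only the sum $\sum_{K\in\D} a_K$ need be symmetric), but this is harmless since your index bookkeeping never actually uses it.
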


\begin{proof}
Let $[\cdot]_{ij}$ and $[\cdot]_i$ denote the components of a matrix or a vector, respectively. Then, it holds that
\begin{align*}
 \left[ \sum_{K\in\D} \mymatrix{D}_K \mymatrix{A}_K \mymatrix{D}_K^{\top} \right]_{ij}
 = \sum_{K\in\D} \big[\mymatrix{D}_K\mymatrix{A}_K\mymatrix{D}_K^{\top}\big]_{ij}
 &= \sum_{K\in\D} \Bigg( \sum_{\ell\in\underline{M_K}} \sum_{k\in\underline{M_K}} d_{ik}^K \, a_K\big(\zeta_{\ell}^K, \zeta_{k}^K\big) \, d_{j\ell}^K \Bigg) \\
 &= \sum_{K\in\D} a_K \hspace{-0,075cm} \Bigg( \sum_{\ell\in\underline{M_K}} d_{j\ell}^K \, \zeta_{\ell}^K, \sum_{k\in\underline{M_K}} d_{ik}^K \, \zeta_{k}^K \Bigg) \\
 &= \sum_{K\in\D} a_K\big( \xi_{j \, | \, K}, \xi_{i \, | \, K} \big) \\
 &= a(\xi_j, \xi_i)
 = A_{ij},
\end{align*}
for all $i,j\in\underline{L}$, cf.~\eqref{eq:rep_phi_xi_02} and \eqref{eq:dec_a_b} which is the first identity. The second identity follows from
\begin{align*}
 \left[ \sum_{K\in\D} \mymatrix{D}_K\myvec{b}_K\right]_i
 = \sum_{K\in\D} \left[ \mymatrix{D}_K\myvec{b}_K \right]_i
 &= \sum_{K\in\D} \Bigg( \sum_{j\in\underline{M_K}} d_{ij}^K \, b_K\big(\zeta_j^K\big) \Bigg) \\
 &= \sum_{K\in\D} b_K \hspace{-0,075cm} \Bigg( \sum_{j\in\underline{M_K}} d_{ij}^K \, \zeta_j^K \Bigg) \\
 &= \sum_{K\in\D} b_K\big( \xi_{i \, | \, K} \big)
 = b(\xi_i) = b_i,
\end{align*}
for $i\in\underline{L}$. The last identity follows in an analogous way.
\end{proof}


\section{Application to $hp$-finite element spaces}\label{sec:appl_to_hp_FEM}

For $d\ge 1$, let $\Omega\subset\RR^d$ be a bounded interval (if $d=1$), or a bounded and open set with a Lipschitz boundary $\Gamma := \partial\Omega$ that is composed of a finite number of straight faces (if $d\ge 2$). Furthermore, we consider a boundary part  $\Gamma_D\subseteq\Gamma$ of positive surface measure, and introduce an associated Hilbert space
\begin{align}\label{eq:H1}
 \XX := \big\lbrace v\in H^1(\Omega) : v = 0 \text{ on } \Gamma_D \big\rbrace,
\end{align}
where $H^1(\Omega)$ denotes the usual Sobolev space of all functions in $L^2(\Omega)$, with weak first-order partial derivatives in $L^2(\Omega)$. 

The goal of the following subsections is to define appropriate $hp$-finite element approximation subspaces in $\XX$. For this purpose, we will begin by introducing a family of hierarchical polynomial spaces on the  $d$-dimensional hypercube signified by $\myref{Q} := [-1,1]^d$, which will be referred to as the \emph{reference element}.


\subsection{Polynomial spaces on the reference element} \label{subsec:hirachical_poly_on_Q}

For any $j\in\NN_0$ we introduce the $1$-dimensional functions $\psi_j:[-1,1]\to\RR$, given by
\begin{align}\label{eq:defi_1D_psi}
 \psi_0(t) := \frac{1}{2} \, (1-t), \qquad
 \psi_1(t) := \frac{1}{2} \, (1+t), \qquad
 \psi_j(t) := \int_{-1}^t L_{j-1}(t)\,\mathsf{d}t, \quad j\geq 2,
\end{align}
where $L_j:[-1,1]\to\RR$ denotes the $j$-th Legendre polynomial, normalized such that $L_j(-1)=(-1)^j$, $j\ge 1$; we recall the \emph{Bonnet recursion formula}
\begin{align*}
 L_0(t) := 1,\qquad 
 L_1(t) := t,\qquad
 j \, L_{j}(t) = (2j-1) \, t \, L_{j-1}(t) - (j-1) \, L_{j-2}(t), \quad j\ge 2, 
\end{align*}
as well as the relation
\begin{align*}
 \psi_j(t)=\frac{L_j(t)-L_{j-2}(t)}{2j-1},
\quad j\geq 2;
\end{align*}
see, e.g., \cite[\S3.1 \& A.4]{szabo:1991}. The basis functions $\psi_0$ and $\psi_1$ are referred to as \emph{nodal or external shape functions} whereas the high-order polynomials $\psi_j$, for $j\ge 2$, are called \emph{internal modes} (based on the fact that $\psi_j(\pm1)=0$ for $j\ge 2$). We display the functions $\psi_0,\ldots,\psi_4$ in the following Figure~\ref{fig:integrLP}.

\begin{figure}
	\centering
	\includegraphics[scale=0.4]{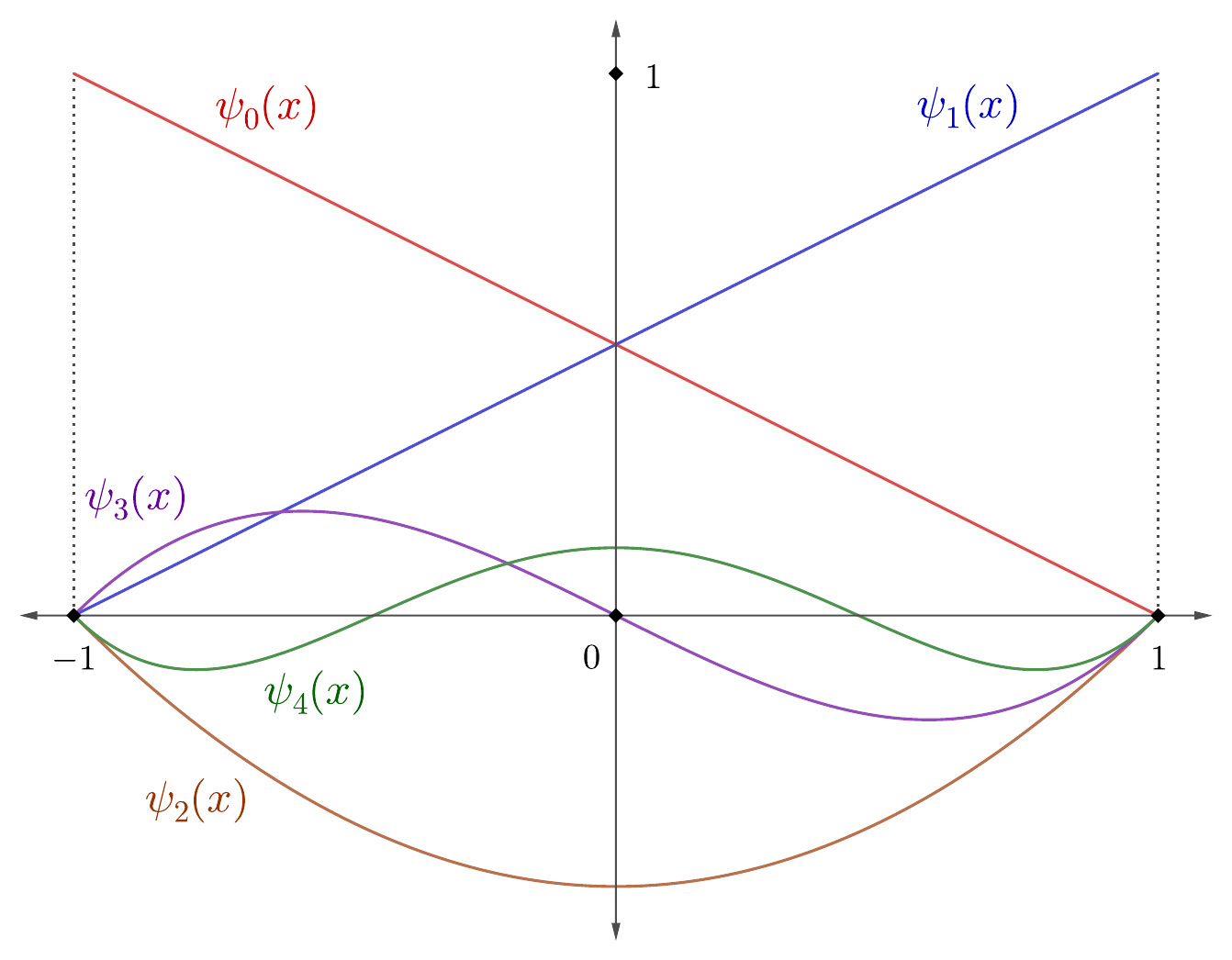} 
	\caption{\it The functions $\psi_0,\ldots,\psi_4$ on $[-1,1]$.}
	\label{fig:integrLP}
\end{figure} 

Furthermore, in the multi-dimensional case, for any multi-index $\myind{j}=(j_1,\ldots,j_d)\in\NN_0^d$, we define the functions $\myref{\psi}_{\myind{j}}:\myref{Q}\to\RR$ by
\begin{align}\label{eq:defi_dDim_psi}
 \myref{\psi}_{\myind{j}}(\myvec{x}) := \prod_{k\in\underline{d}} \psi_{j_k}(x_k), \qquad
 \myvec{x} = (x_1,\ldots,x_d)^{\top}\in\myref{Q}.
\end{align}
Moreover,  for any $k\in\NN$, let $\PP_k(\myref{Q}) := \operatorname{span}\big\lbrace \myref{\psi}_{\myind{j}} : \myind{j}\in \underline{k}_0^d \big\rbrace$ signify the usual space of all polynomials up to degree $k\in\NN$ in each coordinate direction on the reference element $\myref{Q}$.


\subsection{$hp$-finite element space} 

The vertices of the reference element $\myref{Q}$ will be labeled in terms of a multi-index notation. Specifically, to each $\myind{i}\in\lbrace 0,1\rbrace^d$, we associate a corresponding vertex
\begin{align*}
 \myref{\myvec{v}}_{\myind{i}} := \big( 2i_1-1,\ldots, 2i_d-1 \big)^{\top}.
 \qquad 
\end{align*}
Similarly, we consider physical elements $Q\subset\Omega$ whose corner points are indexed by $\myvec{v}_{\myind{i}}\in\RR^d$, $i=1,\ldots, 2^d$; we call $Q$ a \emph{transformed hexahedron} if the map $F_Q : \myref{Q}\to Q$ defined by
\begin{align}\label{eq:elem_transform}
 F_Q(\myvec{x}) := \sum_{|\myind{i}|\leq 1} \myref{\psi}_{\myind{i}}(\myvec{x}) \, \myvec{v}_{\myind{i}},
 \qquad \myvec{x} = (x_1,\ldots,x_d)^{\top}\in\myref{Q},
\end{align}
is bijective, where we write $|\myind{i}| := i_1 + \cdots + i_d$ to signify the order of a multi-index $\myind{i}=(i_1,\ldots,i_d)\in\NN_0^d$. Note that it holds $F_Q(\myref{\myvec{v}}_{\myind{i}}) = \myvec{v}_{\myind{i}}$ for all $\myind{i}\in\lbrace 0,1\rbrace^d$.  

\begin{example}[$F_Q$ for $d=2$]
In this case, the transformation $F_Q$ from~\eqref{eq:elem_transform} takes the form
\begin{align*}
 F_Q(\myvec{x}) 
  &= \myref{\psi}_{00}(\myvec{x}) \, \myvec{v}_{00} + \myref{\psi}_{10}(\myvec{x}) \, \myvec{v}_{10} + \myref{\psi}_{11}(\myvec{x}) \, \myvec{v}_{11} + \myref{\psi}_{01}(\myvec{x}) \, \myvec{v}_{01},\qquad
  \myvec{x}=(x_1,x_2)^{\top}\in\myref{Q},
\end{align*}
and the tensor product functions $\myref{\psi}_{\myind{i}}$ are given by
\begin{align*}
 \myref{\psi}_{00}(\myvec{x}) &= \psi_0(x_1) \, \psi_0(x_2), \qquad
 \myref{\psi}_{10}(\myvec{x}) = \psi_1(x_1) \, \psi_0(x_2), \\
 \myref{\psi}_{11}(\myvec{x}) &= \psi_1(x_1) \, \psi_1(x_2), \qquad
 \myref{\psi}_{01}(\myvec{x}) = \psi_0(x_1) \, \psi_1(x_2).
\end{align*}
\end{example}
\begin{figure}
	\centering
	\includegraphics[scale=0.5]{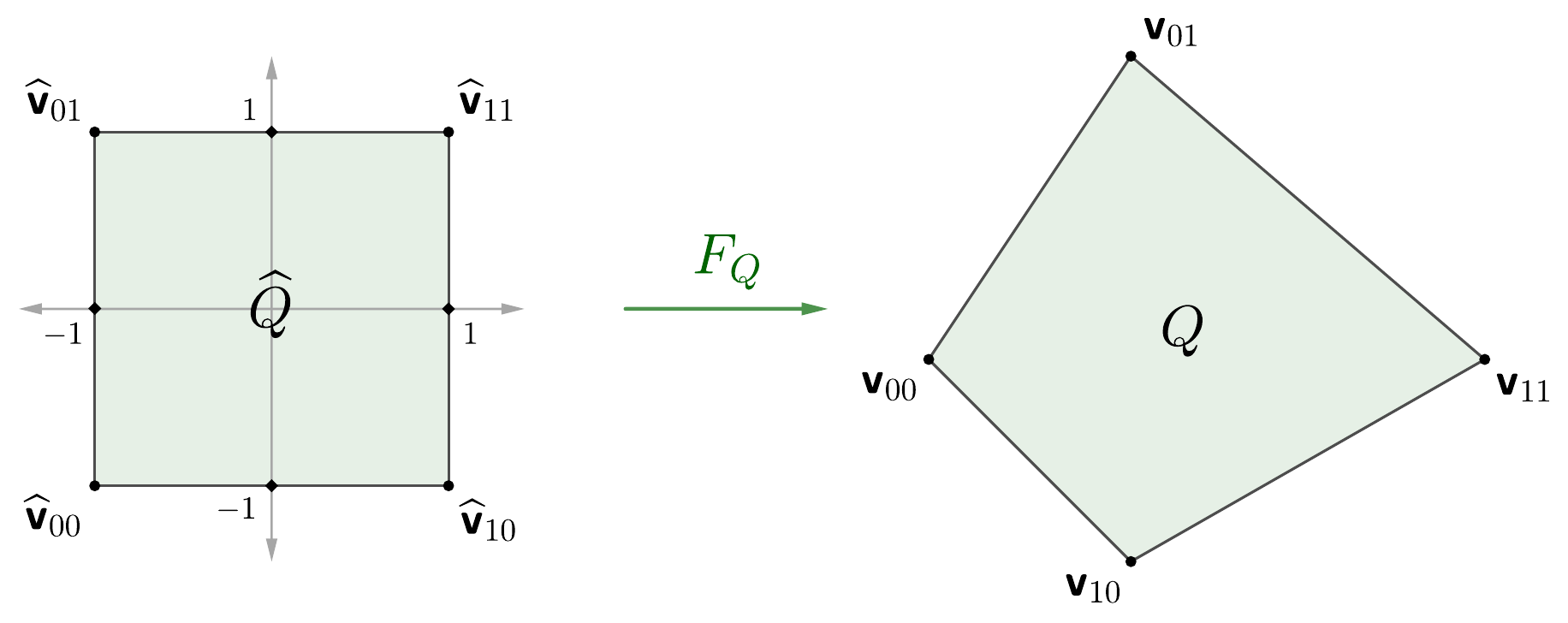} 
	\caption{\it The bijective mapping $F_Q:\myref{Q}\to Q$ for the $2$-dimensional case.}
	\label{fig:elemTransf}
\end{figure}

For the purpose of introducing finite element subspaces $\WW \subseteq \XX$, cf.~\eqref{eq:H1}, following our abstract framework in~\S\ref{subsec:assembling_aspects}, we let $\Q$ be a decomposition of $\Omega$  into transformed hexahedrons with the following additional property: 
If $Q_1\cap Q_2\neq \emptyset$ for $Q_1,Q_2\in\Q$ with $Q_1\neq Q_2$, then $Q_1\cap Q_2$ represents a $(d-r)$-dimensional face of $Q_1$ or $Q_2$ for some $r\in\underline{d}$; 
we call any faces of dimension zero, one and two \emph{vertices}, \emph{edges} and \emph{faces}, respectively. Furthermore, let us assume that there is no change of the type of boundary conditions within the $(d-r)$-dimensional faces of one element $Q\in\Q$. We now define the $hp$-finite element space
\begin{align}\label{eq:FE_space}
 \WW := \big\lbrace v\in\XX : v_{ | \, Q}\circ F_Q\in\PP_{p_Q}(\myref{Q}) \quad \forall \, Q\in\Q \big\rbrace,
\end{align}
associated with the decomposition $\Q$ of $\Omega$, where $p_Q\ge 1$ represents an (isotropic) local polynomial degree on each element $Q\in\Q$.


\subsection{Constraint Coefficients}\label{subsec:constraint_coeff}

In the adaptive finite element procedure we shall represent the functions $\myref{\psi}_{\myind{i}}$ from~\eqref{eq:defi_dDim_psi}, defined on the reference element $\myref{Q}$, in terms of functions defined on sub-hexahedra of $\myref{Q}$. These representations can be computed efficiently by means of so-called \emph{constraint coefficients}, see~\cite{byfut:2017, schroeder:2008}. To explain this concept, consider a sub-hexahedron
\begin{align}\label{eq:tensorT}
 T := \prod_{k\in\underline{d}} I_k\subset \myref{Q}, 
\end{align}
where $I_k=[a_k,b_k]$, $-1\le a_k<b_k\le 1$, are 1-dimensional intervals for each $k\in\underline{d}$. Then,  we define the functions
\begin{equation}\label{eq:tensorP}
\wt{\psi}_{\myind{j}}:\,T\to\mathbb{R},\qquad \wt{\psi}_{\myind{j}} := \myref{\psi}_{\myind{j}}\circ F_T^{-1},\qquad \myind{j}\in\NN_0,
\end{equation}
with the bijective element map $F_{T}:\myref{Q}\to T$, cf.~\eqref{eq:elem_transform}. Here, due to the tensor structure~\eqref{eq:tensorT} of $T$, we emphasize that $F_T$ is the composition of a dilation and a translation, wherefore we observe the identity
\begin{equation}\label{eq:psiT}
\wt{\psi}_{\myind{j}}=\prod_{k\in\underline{d}}\left(\psi_{j_k} \circ F_{I_k}^{-1}\right).
\end{equation}
In particular, we infer that the functions $\wt{\psi}_{\myind{j}} $ in~\eqref{eq:tensorP} constitute a polynomial basis on $T$. Therefore, there exist uniquely determined \emph{constraint coefficients} $b_{\myind{i}\myind{j}}^{T}\in\RR$ such that
\begin{align}\label{eq:representation_restriction}
 \myref{\psi}_{\myind{i} \, | \, T} 
 = \sum_{\myind{j}\leq \myind{i}} b_{\myind{i}\myind{j}}^T \, \wt{\psi}_{\myind{j}}
 = \sum_{\myind{j}\leq \myind{i}} b_{\myind{i}\myind{j}}^T \, \left(\myref{\psi}_{\myind{j}} \circ F_{T}^{-1}\right),
 \qquad \myind{i}\in\NN_0^d,
\end{align}
where the sums are taken over all multi-indices $\myind{j}=(j_1,\ldots,j_d)\in\NN_0^d$ with $j_k\leq i_k$ for all $k\in\underline{d}$. 

\begin{example}[$d=1$]
In Figure~\ref{fig:restr_integrLP} we illustrate how the restriction of the $1$-dimensional function $\psi_2$ to the interval $I = [-1,0]$ can be expressed in terms of the functions $\wt{\psi}_0, \wt{\psi}_1,$ and $\wt{\psi}_2$.
\begin{figure}
	\centering
	\includegraphics[scale=0.4]{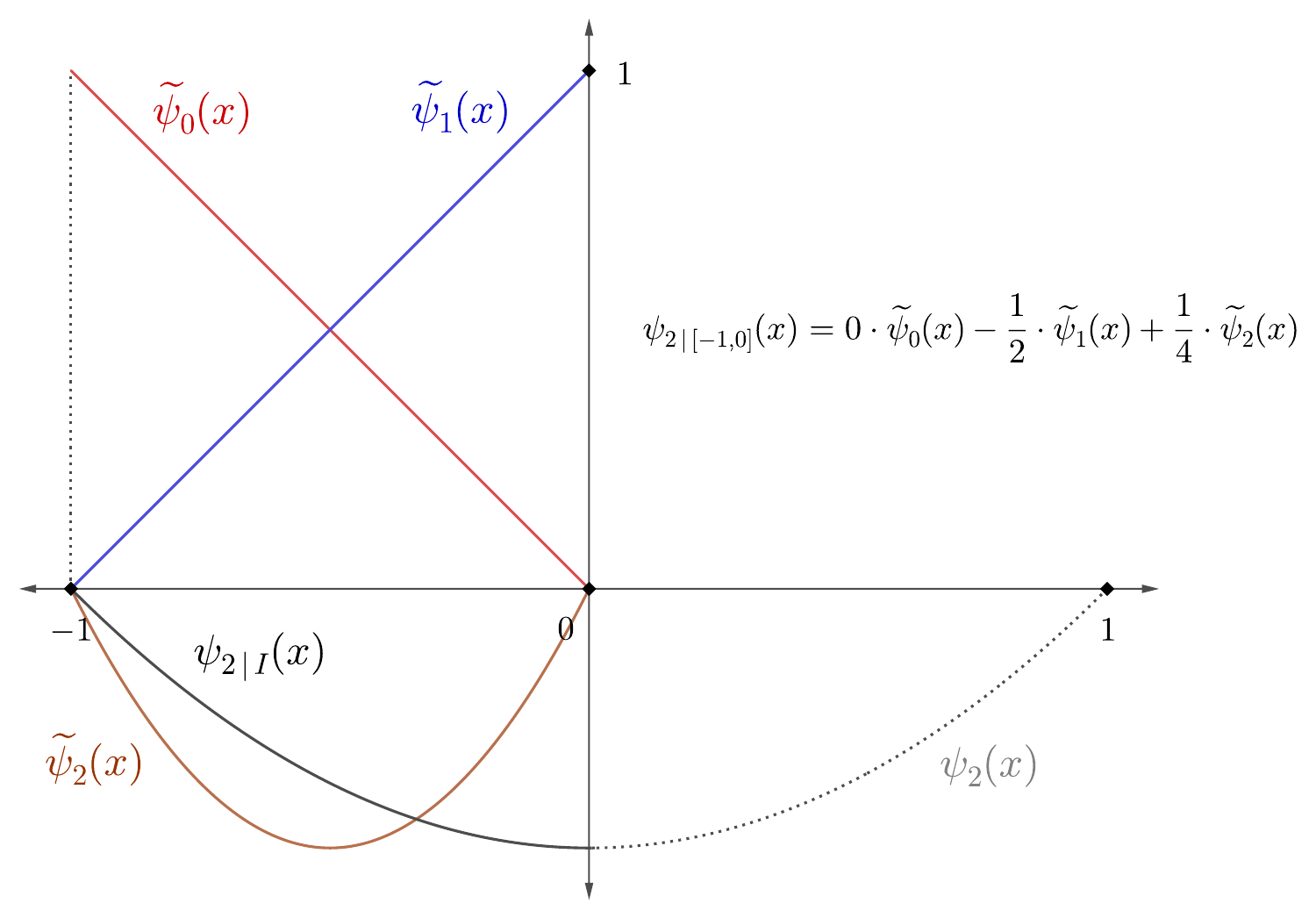} 
	\caption{\it The restriction of $\psi_2$ to $I$ in terms of the functions $\wt{\psi}_0, \wt{\psi}_1, \wt{\psi}_2$ in the $1$-dimensional case.}
	\label{fig:restr_integrLP}
\end{figure}
\end{example}

The ensuing result shows that the multi-dimensional constraint coefficients can be expressed in terms of tensor-products of the associated 1-dimensional quantities for which recursion formulas are stated in the Appendix.

\begin{lemma}
For $\myind{i},\myind{j}\in\NN_0^d$, the constraint coefficients $b_{\myind{i}\myind{j}}^T$  from \eqref{eq:representation_restriction} are given by
\begin{align*}
 b_{\myind{i}\myind{j}}^T = \prod_{k\in\underline{d}} b_{i_k, j_k}^{I_k},
\end{align*}
where $T$ is the sub-hexahedra from~\eqref{eq:tensorT} and $ b_{i_k, j_k}^{I_k}$ are the uniquely determined $1$-dimensional constraint coefficients from
\begin{align*}
 \psi_{i_k \, | \, I_k} = \sum_{j_k = 0}^{i_k} b_{i_k, j_k}^{I_k} \, \left(\psi_{j_k} \circ F_{I_k}^{-1}\right).
\end{align*}
\end{lemma}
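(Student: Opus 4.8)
The plan is to exploit the tensor-product structure that underlies every object appearing in the statement and then to invoke the uniqueness of the constraint coefficients. First I would recall from~\eqref{eq:defi_dDim_psi} that the reference shape functions factorize as $\myref{\psi}_{\myind{i}}(\myvec{x}) = \prod_{k\in\underline{d}} \psi_{i_k}(x_k)$, so that restricting such a function to the sub-hexahedron $T = \prod_{k\in\underline{d}} I_k$ amounts to restricting each one-dimensional factor to the corresponding interval $I_k$; that is,
\[
 \myref{\psi}_{\myind{i}\,|\,T}(\myvec{x}) = \prod_{k\in\underline{d}} \psi_{i_k\,|\,I_k}(x_k).
\]

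Next I would insert the defining one-dimensional expansion $\psi_{i_k\,|\,I_k} = \sum_{j_k=0}^{i_k} b_{i_k,j_k}^{I_k}\,(\psi_{j_k}\circ F_{I_k}^{-1})$ into each of the $d$ factors and distribute the resulting product of finite sums into a single sum over all multi-indices $\myind{j}=(j_1,\ldots,j_d)$ subject to $j_k\leq i_k$ for every $k\in\underline{d}$. Collecting the scalar prefactors yields $\prod_{k\in\underline{d}} b_{i_k,j_k}^{I_k}$ in front of the product $\prod_{k\in\underline{d}}(\psi_{j_k}\circ F_{I_k}^{-1})$, which by the tensor identity~\eqref{eq:psiT} is precisely $\wt{\psi}_{\myind{j}}$. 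This gives
\[
 \myref{\psi}_{\myind{i}\,|\,T} = \sum_{\myind{j}\leq\myind{i}} \Big(\prod_{k\in\underline{d}} b_{i_k,j_k}^{I_k}\Big)\,\wt{\psi}_{\myind{j}}.
\]

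Finally, since the functions $\{\wt{\psi}_{\myind{j}}\}$ form a polynomial basis on $T$ (as noted immediately after~\eqref{eq:psiT}), the coefficients in any expansion of $\myref{\psi}_{\myind{i}\,|\,T}$ against this basis are uniquely determined. Comparing the displayed expansion with the defining relation~\eqref{eq:representation_restriction} therefore forces $b_{\myind{i}\myind{j}}^T = \prod_{k\in\underline{d}} b_{i_k,j_k}^{I_k}$, which is the asserted identity.

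I would expect no serious obstacle here: the argument is essentially a bookkeeping exercise in converting a product of one-dimensional sums into a single multi-index sum, and the only point requiring a word of care is the legitimacy of matching coefficients termwise, which rests on the linear independence of the $\wt{\psi}_{\myind{j}}$ on $T$.
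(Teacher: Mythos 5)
Your proposal is correct and follows essentially the same route as the paper's own proof: factorize $\myref{\psi}_{\myind{i}\,|\,T}$ into one-dimensional restrictions, insert the 1D constraint-coefficient expansions, rearrange the product of sums into a single multi-index sum identified via~\eqref{eq:psiT}, and conclude by uniqueness of the coefficients in~\eqref{eq:representation_restriction}. Your explicit remark that uniqueness rests on the linear independence of the $\wt{\psi}_{\myind{j}}$ on $T$ makes the final step slightly more precise than the paper's wording, but the argument is the same.
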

\begin{proof}
The argument is based on exploiting the tensor structure of $\myref{Q}$ and $T$, and of the functions $\myref{\psi}_{\myind{i}}$. Indeed, for $\myind{i}=(i_1,\ldots,i_d)\in\NN_0^d$, using~\eqref{eq:defi_dDim_psi} and~\eqref{eq:tensorT}, and applying the representation~\eqref{eq:representation_restriction} in the 1-dimensional case, we obtain that
\begin{align*}
 \psi_{\myind{i} \, | \,T} 
  = \prod_{k\in\underline{d}} \psi_{i_k \, | \, I_k}
  = \prod_{k\in\underline{d}} \Bigg( \sum_{j_k\leq i_k} b_{i_k, j_k}^{I_k} \, \left(\psi_{j_k} \circ F_{I_k}^{-1}\right) \Bigg).
\end{align*}
Then, rearranging terms yields
\begin{align*}
 \psi_{\myind{i} \, | \,T}  
  = \sum_{\myind{j}\leq \myind{i}} \Bigg( \prod_{k\in\underline{d}} b_{i_k, j_k}^{I_k} \, \left(\psi_{j_k} \circ F_{I_k}^{-1} \right)\Bigg) 
  = \sum_{\myind{j}\leq \myind{i}} \Bigg( \prod_{k\in\underline{d}} b_{i_k, j_k}^{I_k} \prod_{k\in\underline{d}}\left(\psi_{j_k} \circ F_{I_k}^{-1} \right)\Bigg).
\end{align*}
Owing to~\eqref{eq:psiT}, we conclude that
\begin{align*}
 \psi_{\myind{i} \, | \,T} 
 = \sum_{\myind{j}\leq \myind{i}} \Bigg( \prod_{k\in\underline{d}} b_{i_k, j_k}^{I_k} \Bigg) \, \wt{\psi}_{\myind{j}}.
\end{align*}
The assertion follows from the uniqueness of the coefficients in \eqref{eq:representation_restriction}.
\end{proof}


\subsection{Enrichment functions}\label{subsec:enrichmentF}

We will characterize refinements of elements $Q\in\Q$ in a given mesh~$\Q$ via corresponding refinements of the reference element $\myref{Q}$. To this end, given a (fixed) point $\myref{\myvec{z}}\in(-1,1)^d$ in the interior of $\myref{Q}$, we call a decomposition $\R(\myref{Q})$ of $\myref{Q}$ into the $2^d$ sub-hexahedra
\begin{align*}
 \myref{T}_{\myind{i}} := \prod_{k\in\underline{d}} [ a_k^{\myind{i}}, b_k^{\myind{i}} ], \qquad \myind{i}=(i_1,\ldots,i_d)\in\lbrace 0,1\rbrace^d,
\end{align*}
with
\begin{align*}
 a_k^{\myind{i}} := \min \lbrace 2\, i_k -1, z_k \rbrace, \qquad
 b_k^{\myind{i}} :=  \max \lbrace 2 \, i_k - 1, z_k \rbrace, 
 \qquad k\in\underline{d},
\end{align*}
a \emph{refinement of $\myref{Q}$ with respect to $\myref{\myvec{z}} = (z_1,\ldots,z_d)^{\top}$}. 

\begin{figure}
	\centering
	\includegraphics[scale=0.55]{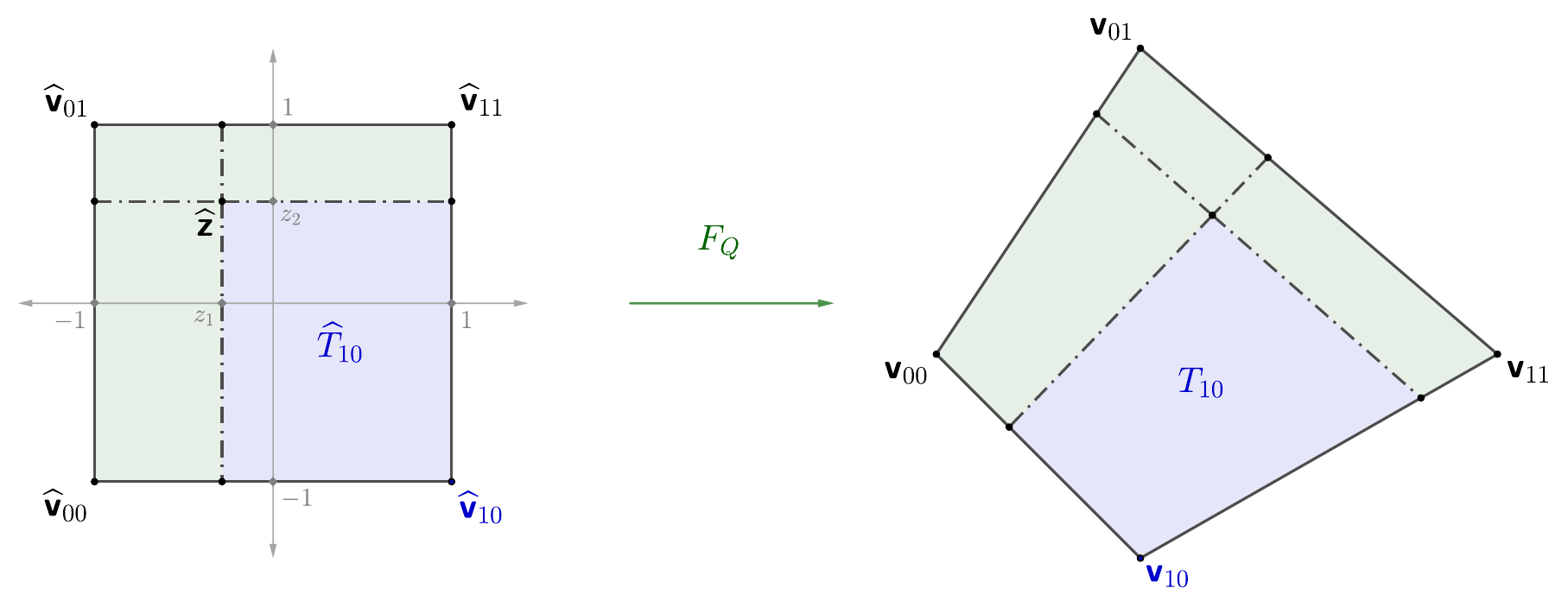} 
	\caption{\it A refinement of the reference element $\myref{Q}$ with respect to $\myref{\myvec{z}}=(z_1,z_2)^{\top}\in(-1,1)^2$ and the corresponding refinement of the element $Q = F_Q(\myref{Q})$.}
	\label{fig:refinment_referenceElem}
\end{figure}

Let us now focus on some element $Q\in\Q$. To introduce a refinement of $Q$, we consider first a refinement $\R(\myref{Q})$ of $\myref{Q}$ with respect to $\myref{\myvec{z}}\in(-1,1)^d$, as outlined above, and define
\begin{align*}
 \R(Q) := \big\lbrace T_{\myind{i}} : \myind{i}\in\lbrace 0,1\rbrace^d \big\rbrace, \qquad
 T_{\myind{i}} := F_{Q}(\myref{T}_{\myind{i}}).
\end{align*}
We display an illustration of the 2-dimensional situation in Figure~\ref{fig:refinment_referenceElem}. Note that the multi-index $\myind{i}$ encodes the location of the sub-hexahedra $T_{\myind{i}}$ with respect to the Cartesian reference coordinate system in $\myref{Q}$; in fact, the $k$-th entry of $\myind{i}$ classifies whether $\myref{T}_{\myind{i}}$ lies in the left (if $i_k = 0$) or the right (if $i_k=1$) half of $Q$ with respect to $\myref{\myvec{z}}$ along the $k$-th axial direction. Finally, let us denote by $\T$ the resulting mesh when replacing the element $Q$ by its refinement $\R(Q)$, i.e. $\T := \big( \Q\setminus\lbrace Q\rbrace \big) \cup \R(Q).$


\subsubsection{Enrichment strategies on a single element}\label{sec:1element}

For an element $Q\in\Q$ with an associated polynomial degree $p_Q$, two types of local enrichment functions $\xi_1,\ldots,\xi_L$, cf.~\eqref{eq:xi}, will be considered:
\begin{enumerate}[(i)]
\item
For the definition of \emph{$p$-enrichment functions on $Q$}, we consider polynomials on the reference element $\widehat{Q}$, with polynomial degrees larger than $p_{Q}$, and transform them to the physical element $Q$. \medskip

\item For the construction of \emph{$hp$-enrichment functions on $Q$}, we consider polynomials on the sub-hexahedra $\myref{T}_{\myind{i}}\in\R(\myref{Q})$, $\myind{i}\in\lbrace 0,1\rbrace^d$, of a refinement $\R(\myref{Q})$ of $\myref{Q}$ with respect to some $\myref{\myvec{z}}\in(-1,1)^d$, which are then transformed to $Q$.
\end{enumerate}
We give an illustration of these two scenarios for the $p$-enrichment or $hp$-refinement of a 1-dimensional element in Figure~\ref{fig:p_vs_h_enrichment}. The polynomial functions resulting from the above mappings from $\myref{Q}$ or from sub-hexahedra of $\myref{Q}$ to $Q$ will be termed \emph{transformed polynomials}.
\begin{figure}
	\centering
	\includegraphics[scale=0.2]{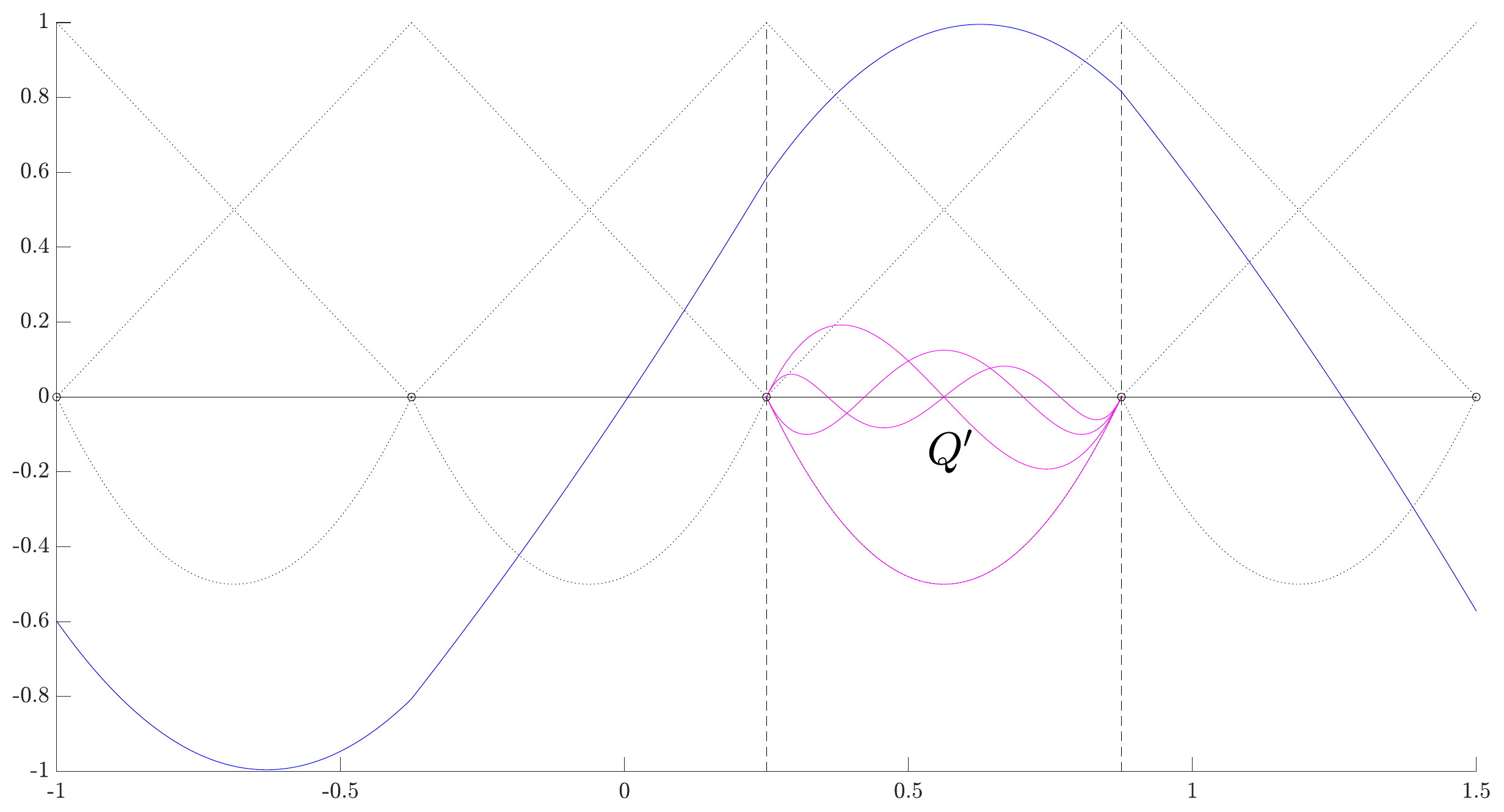} \hfill
	\includegraphics[scale=0.2]{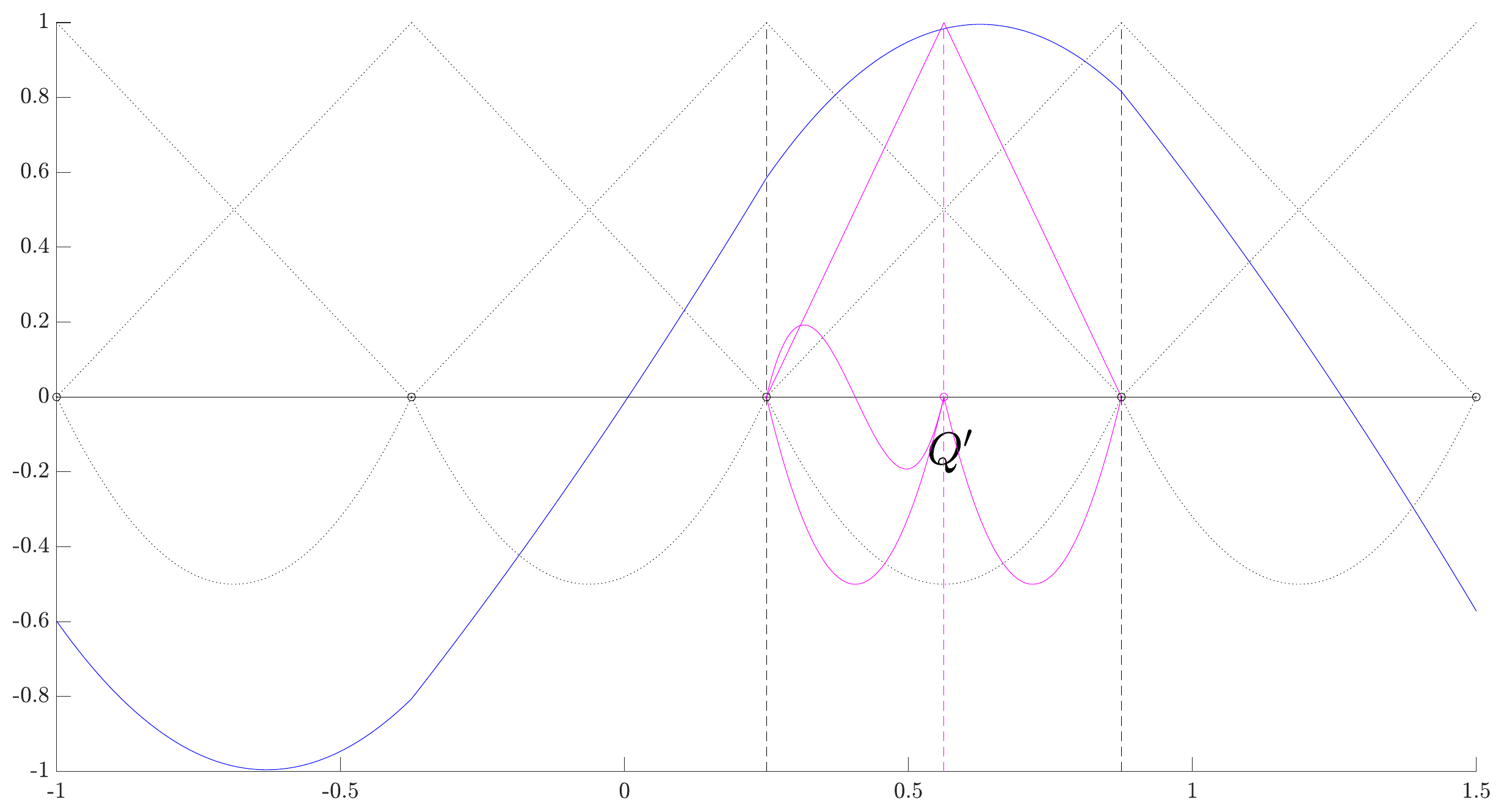}
	\caption{\it A $p$-enrichment (left) vs.~an $hp$-refinement (right). In both figures the dicrete solution $u_{\WW}$ is higlighted in blue, the in each case four enrichment functions are depicted in magenta and the basis $\lbrace \phi_1,\ldots,\phi_N\rbrace$ of $\WW$ is indicated in dotted lines.}
	\label{fig:p_vs_h_enrichment}
\end{figure}

In the $hp$-adaptive procedure described in \S\ref{sec:adaptive_alg} we aim to compare different $p$-enrichments and $hp$-refinements on $Q$; in particular \emph{competitive} refinements, cf.~\cite{houston:2016}, where different enrichments, which generate the same number of degrees of freedom, are compared with eachother in view of a maximal potential predicted error reduction. We will express both, the $hp$-enrichment functions (given on the subelements $T_{\myind{i}}\in\R(Q)$) as well as the $p$-enrichment functions (given on $Q$) in terms of the transformed polynomials $\zeta_{\myind{j}}^{\myind{i}} : T_{\myind{i}}\to\RR$, 
\begin{align}\label{eq:zetas}
 \zeta_{\myind{j}}^{\myind{i}}(\myvec{x}) := \myref{\psi}_{\myind{j}} \circ F_{\myind{i}}^{-1}(\myvec{x}), \qquad \myvec{x}\in T_{\myind{i}},
\end{align}
for any $\myind{j}\in\NN_0^d$ and $\myind{i}\in\lbrace 0,1\rbrace^d$. For ease of notation, we write $F_{\myind{i}}$ for the bijective mapping $ F_{T_{\myind{i}}}: \myref{Q}\to T_{\myind{i}}$, cf.~\eqref{eq:elem_transform}; similarly, we denote by $\mymatrix{A}_{\myind{i}}, \mymatrix{C}_{\myind{i}}, \mymatrix{D}_{\myind{i}}$, and $\myvec{b}_{\myind{i}}$ the local quantities $\mymatrix{A}_{T_{\myind{i}}}, \mymatrix{C}_{T_{\myind{i}}}, \mymatrix{D}_{T_{\myind{i}}}$ and $\myvec{b}_{T_{\myind{i}}}$ from~\S\ref{subsec:assembling_aspects}.
We emphasize that the quantities $\mymatrix{A}_{\myind{i}}, \mymatrix{C}_{\myind{i}}$ and $\myvec{b}_{\myind{i}}$, for \emph{each} of the different enrichments on $Q$ to be compared, need to be computed \emph{once only}.


\subsubsection{$p$-enrichments on $Q$}

For any multi-index $\myind{j}=(j_1,\ldots,j_d)\in\NN_0^d$, let us introduce the functions $\xi_{\myind{j}}: \Omega\to\RR$ by
\begin{align}\label{eq:defi_xi_p_enrichment}
 \xi_{\myind{j}}(\myvec{x}) := 
 \begin{cases} 
  \myref{\psi}_{\myind{j}} \circ F_{Q}^{-1}(\myvec{x}), & \text{if } \myvec{x}\in Q, \\
  0, & \text{if } \myvec{x}\in\Omega\setminus Q.
 \end{cases}
\end{align}
For the $p$-enrichments on $Q$ we choose a finite subset of the functions 
\begin{equation}\label{eq:penrich}
\mathfrak{E}_p = \lbrace \xi_{\myind{j}} : \myind{j}\in\myind{J}_d \rbrace,
\end{equation} 
for some index set
\begin{align*}
 \myind{J}_d \subseteq \big\lbrace \myind{j}\in\NN_0^d : j_k \geq 2 \text{ for } k\in\underline{d} \big\rbrace, \qquad
 \text{with} \quad |\myind{J}_d| = L < \infty.
\end{align*}
We emphasize that we exclusively consider transformed (higher-order bubble-type) polynomials that vanish along the boundary of $Q$; cf. Remark~\ref{rem:patches} below for some generalizations to patches. In particular, we have the following result:

\begin{proposition}\label{prop:properties_penrichmentF}
Any $\xi\in\mathfrak{E}_p$ is continuous on $\Omega$, and it holds $\operatorname{supp}(\xi) = Q$.
\end{proposition}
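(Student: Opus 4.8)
The plan is to treat the two assertions separately, with both resting on the bubble property $\psi_j(\pm 1) = 0$ for $j \geq 2$, which follows from the relation $\psi_j = (L_j - L_{j-2})/(2j-1)$ together with the normalization $L_j(\pm 1) = (\pm 1)^j$.

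First I would establish continuity. On the interior of $Q$ the function $\xi_{\myind{j}}$ coincides with the smooth map $\myref{\psi}_{\myind{j}} \circ F_{Q}^{-1}$, and on $\Omega \setminus Q$ it vanishes identically; thus continuity is only in question along $\partial Q$. Since $F_Q$ from~\eqref{eq:elem_transform} is a continuous bijection from the compact set $\myref{Q}$ onto $Q$, it is a homeomorphism, so $F_Q^{-1}$ extends continuously to $\partial Q$ and maps it onto $\partial \myref{Q}$. The decisive step is to verify that $\myref{\psi}_{\myind{j}}$ vanishes on \emph{all} of $\partial \myref{Q}$: every $\myref{\myvec{x}} \in \partial \myref{Q}$ has some coordinate $\myref{x}_k = \pm 1$, and since the definition of $\myind{J}_d$ enforces $j_k \geq 2$ for each $k \in \underline{d}$, the corresponding factor $\psi_{j_k}(\myref{x}_k)$ in the tensor product~\eqref{eq:defi_dDim_psi} is zero, whence $\myref{\psi}_{\myind{j}}(\myref{\myvec{x}}) = 0$. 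Consequently $\xi_{\myind{j}}(\myvec{x}) \to 0$ as $\myvec{x} \to \partial Q$ from within $Q$, which agrees with the exterior value $0$, and continuity on all of $\Omega$ follows.

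For the support identity, the inclusion $\operatorname{supp}(\xi_{\myind{j}}) \subseteq Q$ is immediate from $\xi_{\myind{j}} \equiv 0$ on $\Omega \setminus Q$ and the closedness of $Q$. For the reverse inclusion I would note that $\myref{\psi}_{\myind{j}} = \prod_{k \in \underline{d}} \psi_{j_k}(x_k)$ is a nonzero polynomial, so its zero set---a finite union of hyperplanes $\{ x_k = r \}$ indexed by the roots $r$ of the univariate factors $\psi_{j_k}$---has empty interior; hence the nonvanishing set of $\myref{\psi}_{\myind{j}}$ is dense in $\myref{Q}$. Transporting this through the homeomorphism $F_Q$ shows that $\{ \xi_{\myind{j}} \neq 0 \}$ is dense in the interior of $Q$, and since $F_Q$ being a homeomorphism makes $Q$ the closure of its interior, taking closures yields $\operatorname{supp}(\xi_{\myind{j}}) = Q$.

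The only genuinely delicate point is the gluing across $\partial Q$, and its crux is that $\myind{J}_d$ imposes $j_k \geq 2$ \emph{simultaneously} in all coordinate directions: this is exactly what forces the tensor-product bubble $\myref{\psi}_{\myind{j}}$ to vanish on the \emph{entire} boundary of $\myref{Q}$---rather than only on the faces orthogonal to a single direction---and thereby guarantees that the transformed polynomial matches the zero extension continuously. Everything else is routine.
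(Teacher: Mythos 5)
Your proof is correct, and it follows exactly the reasoning the paper intends: the paper states this proposition \emph{without} proof, justifying it only by the preceding remark that the functions in $\mathfrak{E}_p$ are transformed higher-order bubble-type polynomials with $j_k \geq 2$ enforced in \emph{every} coordinate direction, hence vanishing on all of $\partial Q$ --- precisely your key step. The details you supply beyond this (that $F_Q$, as a continuous bijection on the compact set $\myref{Q}$, is a homeomorphism, and the density-of-the-nonvanishing-set argument yielding $\operatorname{supp}(\xi) = Q$) correctly and appropriately fill in what the paper leaves implicit, and they parallel the structure of the paper's explicit proof of the analogous result for the $hp$-enrichment functions (Proposition~\ref{prop:properties_henrichmentF}).
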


Obviously, there are various possibilities to choose appropriate $p$-enrichment functions. For instance, if we select all transformed polynomials up to a certain polynomial degree $p_{\max}> p_Q$, we have $\myind{J}_d = \lbrace 2,\ldots, p_{\max}\rbrace^d$,
whereas for the so-called \emph{hierarchical surplus} we choose
\begin{align*}
 \myind{J}_d = \big\lbrace \myind{j}\in\lbrace 2,\ldots, p_{\max}\rbrace^d : |\myind{j}| \geq p_{Q}+2 \big\rbrace.
\end{align*}


\subsubsection{$hp$-enrichment functions on $Q$}\label{subsec:hp_enrichments}

In the case of $hp$-refinements, we construct local enrichment functions that can be associated with those $r$-dimensional faces of the refinement $\R(Q)$ which do not lie on the boundary $\partial Q$. We will call such faces \emph{internal nodes} of the refinement $\R(Q)$. In this sense, the \emph{vertex} $F_{Q}(\myvec{\myref{z}})$ is the only $0$-dimensional internal node of $\R(Q)$, \emph{edges} in the interior of $Q$ are the $1$-dimensional internal nodes of $\R(Q)$, and the \emph{elements} $T_{\myind{i}}$, for $\myind{i}\in\lbrace 0,1\rbrace^d$, represent the $d$-dimensional internal nodes of $\R(Q)$.


\paragraph*{Indexing of internal nodes}

Any $r$-dimensional internal node of $\R(Q)$, $r\in\underline{d}_0$, can be identified uniquely by  $r$ axial directions of $\myref{Q}$, which are represented by an \emph{orientation tuple} 
\begin{align*}
 \myind{a}\in
 D_r := \big\lbrace (a_1,\ldots,a_r)\in\underline{d}^r : a_1 < \ldots < a_r \big\rbrace,
\end{align*}
together with a \emph{location tuple} $\myind{\ell}\in\lbrace 0,1\rbrace^r$ that fixes its position with respect to the center point $F_Q(\myvec{\myref{z}})$. For any $\myind{a}=(a_1,\ldots,a_r)\in D_r$, let us denote by $ A(\myind{a})$ the set of its components, i.e. $A(\myind{a}) := \lbrace a_k : k\in\underline{r} \rbrace$.
Note that a tuple $\myind{a}\in D_r$ describes the orientation (and, implicitly, contains the dimension $r$) of an internal node with respect to the Cartesian reference coordinate system in $\myref{Q}=F_Q^{-1}(Q)$. Moreover, the $k$-th entry of the location tuple $\myind{\ell} = (\ell_1,\ldots,\ell_r)\in\lbrace 0,1\rbrace^d$ defines whether an internal node with orientation $\myind{a}=(a_1,\ldots,a_r)$ lies in the left (if $\ell_k=0$) or in the right (if $\ell_k=1$) half of the refinement $\R(Q)$ along the $k$-th axial direction of the reference coordinate system in $\myref{Q}=F_Q^{-1}(Q)$. Observe further that, for any $d\geq 1$, the only $0$-dimensional internal node of $\R(Q)$ is represented by the empty tuples $\myind{a} = ()$ and $\myind{\ell} = ()$.

\begin{example}[Internal nodes for the $3$-dimensional case]\label{ex:3}
The 0-dimension center point is given by the empty tuples $\myind{a}=()$ and $\myind{\ell}=()$. For the $1$-dimensional edges of $\R(\myref{Q})$ there are three orientations, namely parallel to any of the $x$-, $y$- or $z$-axes; they are described by $\myind{a}=(1)$, $\myind{a}=(2)$, and $\myind{a}=(3)$, respectively. Moreover, there are three orientations for $2$-dimensional faces, namely parallel to any of the $xy$-, $xz$- or $yz$-planes; they correspond to the orientation pairs $\myind{a}=(1,2)$, $\myind{a}=(1,3)$, and $\myind{a}=(2,3)$, respectively. Finally, there is a single orientation triple for the eight full-dimensional elements $\myref{T}_{\myind{i}}$ given by $\myind{a}=(1,2,3)$. For the interpretation of the location tuple, let us consider, for instance, the four internal nodes of dimension~2, which are parallel to the $xy$-plane, i.e. with orientation $\myind{a} = (1,2)$; they are highlighted in Figure~\ref{fig:interpretation_locations} (left), and their possible locations are represented by the location pairs listed in Table~\ref{tb:loc}.
To give a further example, the left $1$-dimensional internal node in $x$-direction of $\R(Q)$, highlighted in Figure~\ref{fig:interpretation_locations} (right) is characterized by $\myind{a} = (1)$ and $\myind{\ell}=(0)$.
\begin{figure}
	\centering\hfill
	\includegraphics[scale=0.175]{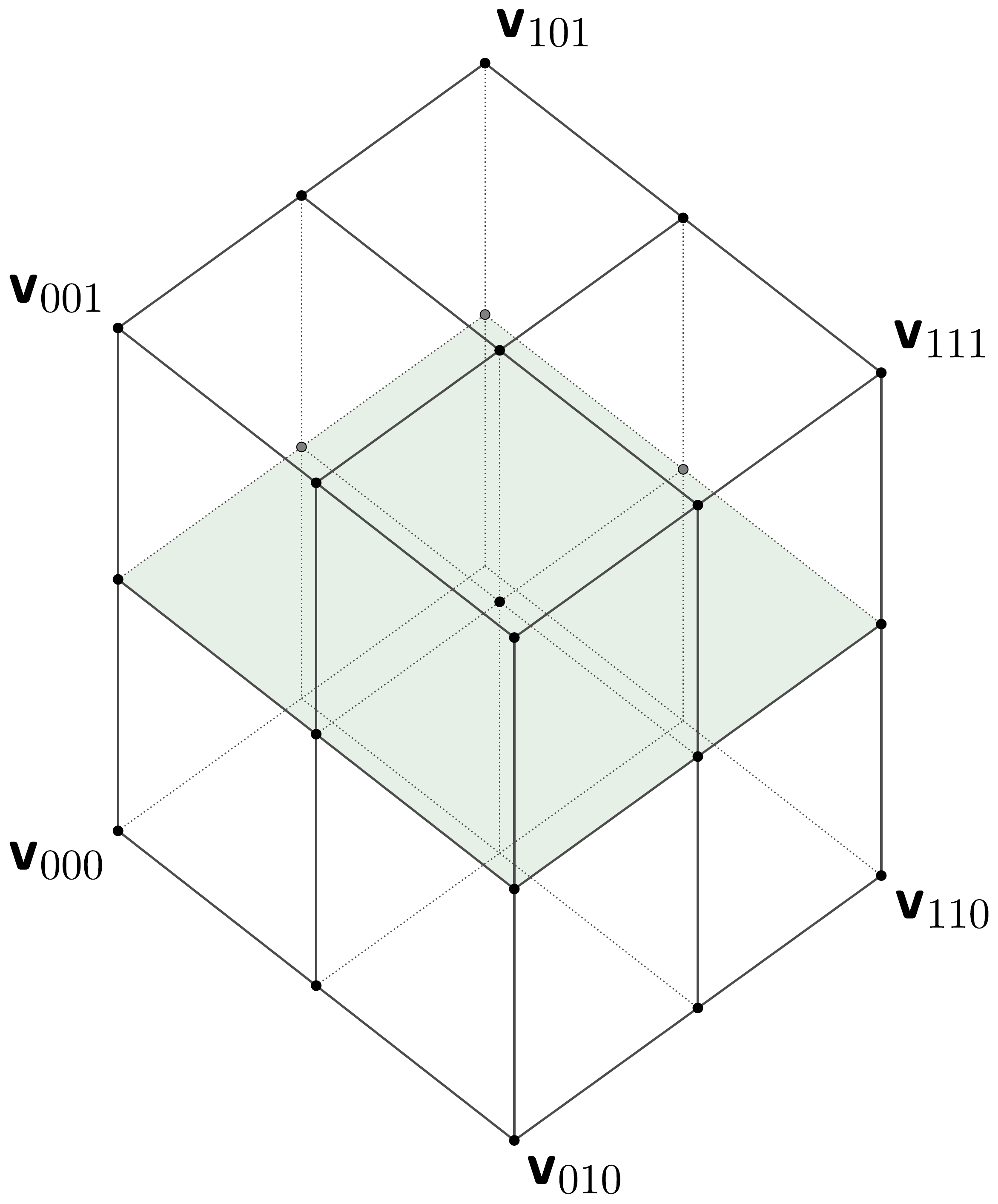}
	\hfill
	\includegraphics[scale=0.175]{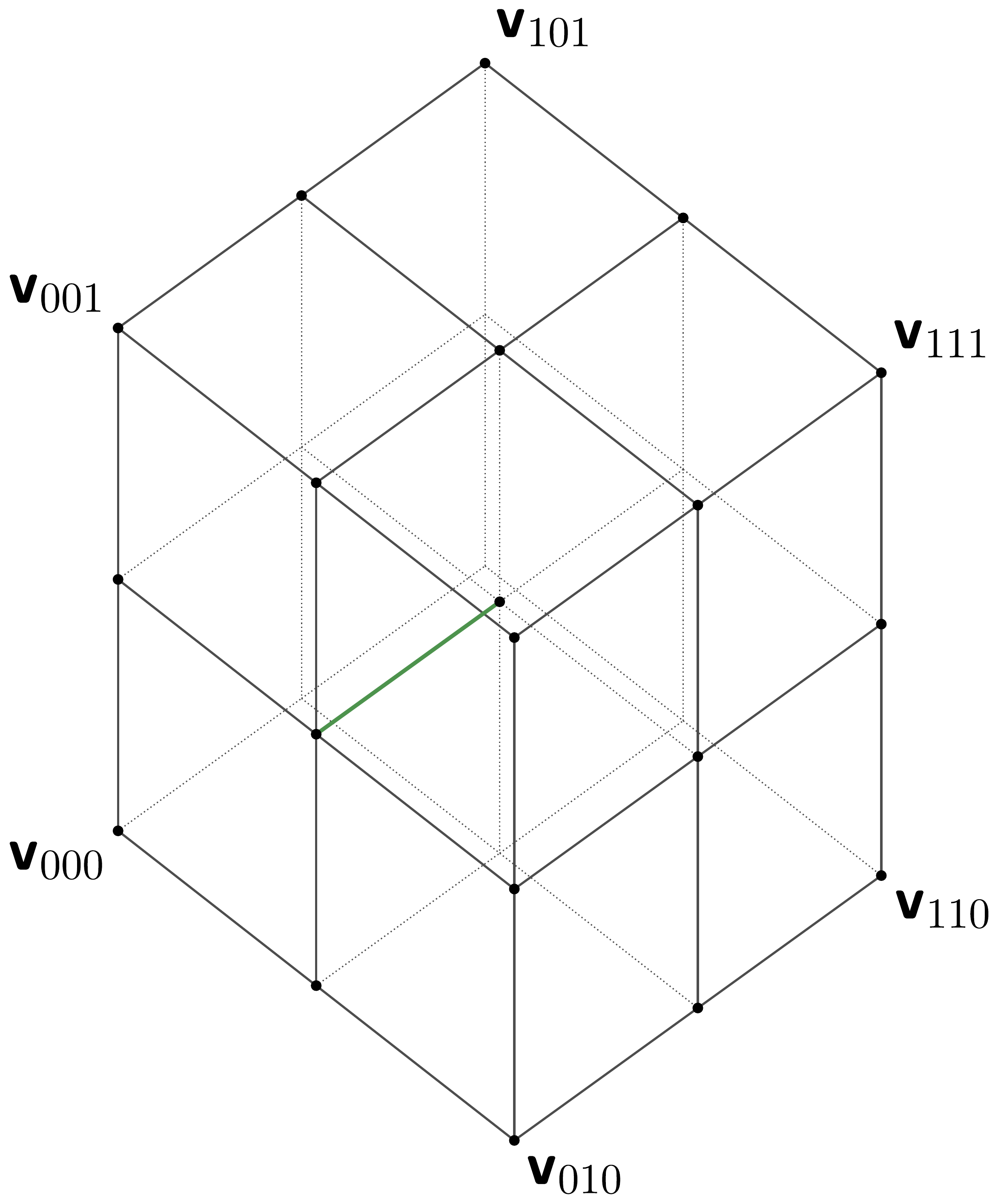} \hfill
	\caption{\it Illustration for Example~\ref{ex:3}: Three dimensional example of a refinement $\R(Q)$.}
	\label{fig:interpretation_locations}
\end{figure}
\begin{table}
\centering
\begin{tabular}{ll}\toprule
 location tuple $\myind{\ell}$ & location in refinement $\R(Q)$\\
 \midrule
 $\myind{\ell} = (0,0)$ & left half in $x$-, left half in $y$-direction  \\
 $\myind{\ell} = (1,0)$ & right half in $x$-, left half in $y$-direction \\
 $\myind{\ell} = (0,1)$ & left half in $x$-, right half in $y$-direction    \\
 $\myind{\ell} = (1,1)$ & right half in $x$-, right half in $y$-direction  \\
 \bottomrule
\end{tabular}\medskip
\caption{\it Possible locations for interior 2-dimensional faces parallel to the $xy$-plane in the refinement of an element~$Q$, all with orientation tuple $\myind{a}=(1,2)$.}
\label{tb:loc}
\end{table}
\end{example}

For any $r\in\underline{d}_0$, let us introduce the set
\begin{align*}
 \N_r &:= \big\lbrace \myind{n} = (\myind{a},\myind{\ell}) : \myind{a}=(a_1,\ldots,a_r)\in D_r \text{ and } \myind{\ell}=(\ell_1,\ldots,\ell_r)\in\lbrace 0,1\rbrace^r \big\rbrace
\end{align*}
of all $r$-dimensional internal nodes in~$\R(Q)$; note that the cardinality of $\N_r$ is given by
\begin{align*}
 |\, \N_r| = 2^r\binom{d}{r}\, .
\end{align*}
Furthermore, we signify by $\N := \bigcup_{r\in \underline{d}_0} \N_r$ the collection of all internal nodes  of any dimension $r=0,\dots,d$ in $\R(Q)$. In addition, for any $\myind{n}=(\myind{a},\myind{\ell})\in\N$, we define
\begin{align*}
 \myind{I}(\myind{n}) 
 := \big\lbrace \myind{i} = (i_1,\ldots,i_d)\in \lbrace 0,1\rbrace^d : i_k = \ell_k \text{ for each } k\in A(\myind{a}) \big\rbrace
\end{align*}
to be the set of all multi-indices $\myind{i}\in\lbrace 0,1\rbrace^d$ corresponding to elements $T_{\myind{i}}\in\R(Q)$ that share the internal node $\myind{n}$. 

\begin{example}[$ \myind{I}(\myind{n})$ for $3$-dimensional example]\label{ex:4}
In the setting of Example~\ref{ex:3}, let us consider once more the 1-dimensional edge, which is given by the orientation and location tuples $\myind{a}=(1)$ and the $\myind{\ell}=(0)$. Then, the indices of all 3-dimensional elements $T_{\myind{i}}\in\R(Q)$ sharing the internal node $\myind{n}=\{((1),(0))\}\in\N_1$, which are highlighted in Figure~\ref{fig:interpretation_indexSet}, are collected in the set
$
 \myind{I}(\myind{n}) = \big\lbrace (\underline{0},0,0), (\underline{0},1,0), (\underline{0},1,1), (\underline{0},0,1) \big\rbrace.
$
\begin{figure}
	\centering
	\includegraphics[scale=0.175]{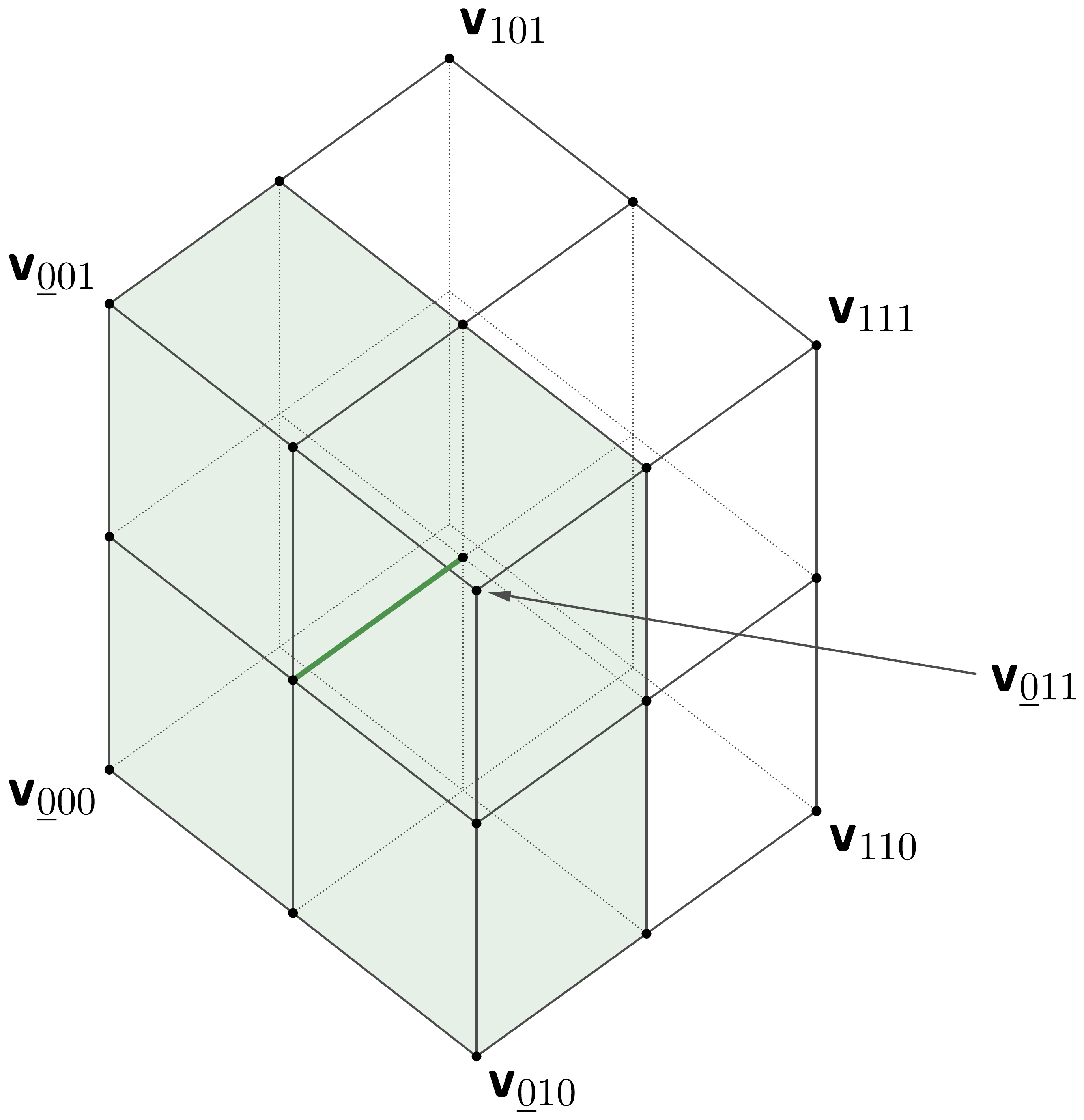}
	\caption{\it Illustration for Example~\ref{ex:4}: 1-dimensional edge in a 3D refinement.}
	\label{fig:interpretation_indexSet}
\end{figure}
\end{example}


\paragraph*{$hp$-enrichment functions on~$Q$}

Let $\myind{n}=(\myind{a},\myind{\ell})\in\N_r$ be an internal node of $\R(Q)$, for some $r\in\underline{d}_0$, and consider an associated polynomial distribution 
\begin{align*}
 \myind{p} = (p_1,\ldots,p_r)\in \big\lbrace \myind{p}\in\NN_0^r : p_k\geq 2 \text{ for } ,k\in\underline{r} \big\rbrace;
\end{align*}
here, the entry $p_k$ corresponds to the coordinate direction $k\in A(\myind{a})$ of $\myind{n}$. Then, for each $\myind{i}\in \myind{I}(\myind{n})$, we introduce a $d$-tuple $\myind{j}(\myind{i},\myind{p})=(j_1,\ldots,j_d)\in\NN_0^d$  component-wise by
\begin{align}\label{eq:defi_index_for_h_xi}
 j_k := \begin{cases}
  p_k, & \text{if }k\in A(\myind{a}), \\
  1-i_k, & \text{if } k\notin A(\myind{a}).
 \end{cases}
\end{align}
Let us now define functions $\xi_{\myind{n},\myind{p}}:\Omega\to\RR$, which are associated with the $r$-dimensional internal node $\myind{n}$, by
\begin{align}\label{eq:defi_xi_h_enrichment}
 \xi_{\myind{n},\myind{p}}(\myvec{x}) := \begin{cases}
  \zeta_{\myind{j}(\myind{i},\myind{p})}^{\myind{i}}(\myvec{x}), & \text{if } \myvec{x}\in T_{\myind{i}} ,\text{ for } \myind{i}\in\myind{I}(\myind{n}), \\
  0, & \text{if } \myvec{x} \in \Omega \setminus T(\myind{n}),
 \end{cases}
\end{align}
where
\begin{align}\label{eq:supp_xi}
 T(\myind{n}) := \bigcup_{\myind{i}\in\myind{I}(\myind{n})} T_{\myind{i}}
\end{align}
represents the support of the functions $\xi_{\myind{n},\myind{p}}$. As $hp$-enrichment functions we choose finitely many such functions. More precisely, for any internal node $\myind{n} = (\myind{a},\myind{\ell})\in\N_r$, $r\in\underline{d}_0$, we select a finite set
\begin{align*}
 \myind{P}(\myind{n})\subseteq \big\lbrace \myind{p}\in\NN_0^r : p_k\geq 2 \text{ for } k\in\underline{r} \big\rbrace,
\end{align*}
and let
\begin{align}\label{eq:general_h_enrichment}
 \mathfrak{E}_{hp} := \bigcup_{\myind{n}\in\N} \mathfrak{E}_{hp,\myind{n}}, 
  \qquad
 \mathfrak{E}_{hp,\myind{n}} := \big\lbrace \xi_{\myind{n},\myind{p}} : \myind{p}\in \myind{P}(\myind{n}) \big\rbrace.
\end{align}
For the single $0$-dimensional internal node of $\R(Q)$, represented by $\myind{n} = ((),())$, we note that $\myind{P}(\myind{n}) = \lbrace ()\rbrace$; hence, the only $hp$-enrichment function $\xi_{\myind{n},\myind{p}}$ associated with this node is given by
\begin{align*}
 \xi_{((),()),())}(\myvec{x}) = \begin{cases}
  \zeta_{\myvec{1}-\myind{i}}^{\myind{i}}(\myvec{x}), & \text{if } \myvec{x}\in T_{\myind{i}} ,\text{ for } \myind{i}\in\lbrace 0,1\rbrace^d, \\
  0, & \text{if } \myvec{x} \in \Omega \setminus T(\myind{n}),
 \end{cases}
\end{align*}
where $\myind{1} = (1,\ldots,1)\in\NN^d$.

\begin{proposition}\label{prop:properties_henrichmentF}
Any $\xi_{\myind{n},\myind{p}}\in\mathfrak{E}_{hp}$ is continuous in $\Omega$, and it holds $\operatorname{supp}(\xi_{\myind{n},\myind{p}}) = T(\myind{n})$.
\end{proposition}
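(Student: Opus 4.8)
The plan is to carry out the entire argument on the reference element $\myref{Q}$, where the refinement $\R(\myref{Q}) = \{\myref{T}_{\myind{i}}\}$ consists of genuine tensor-product boxes, and then transfer the conclusion to $\Omega$. Since $F_Q:\myref{Q}\to Q$ is a homeomorphism and $\xi_{\myind{n},\myind{p}}$ vanishes identically on $\Omega\setminus T(\myind{n})$ with $T(\myind{n})\subseteq Q$, it suffices to show that the pullback $\xi_{\myind{n},\myind{p}}\circ F_Q$ --- defined piecewise on the boxes $\myref{T}_{\myind{i}}$, $\myind{i}\in\myind{I}(\myind{n})$, and extended by zero on the remaining part of $\myref{Q}$ --- is continuous on $\myref{Q}$; continuity on $\Omega$ then follows because the pullback will be seen to vanish on $\partial\myref{Q}$, matching the zero extension across $\partial Q$. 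By~\eqref{eq:psiT}, on each such box the pullback equals $\prod_{k\in\underline{d}}\big(\psi_{j_k}\circ F_{I_k}^{-1}\big)$ with $\myind{j}=\myind{j}(\myind{i},\myind{p})$ from~\eqref{eq:defi_index_for_h_xi} and $I_k=[a_k^{\myind{i}},b_k^{\myind{i}}]$; this reduces every interface condition to a one-dimensional evaluation of some $\psi_j$ at the endpoints $\pm1$, recalling from~\eqref{eq:defi_1D_psi} that $\psi_0(-1)=\psi_1(1)=1$, $\psi_0(1)=\psi_1(-1)=0$, and $\psi_j(\pm1)=0$ for $j\ge 2$.

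For the support claim I would note that $\operatorname{supp}(\xi_{\myind{n},\myind{p}})\subseteq T(\myind{n})$ is immediate from the definition~\eqref{eq:defi_xi_h_enrichment}, while the reverse inclusion follows since on each $\myref{T}_{\myind{i}}$, $\myind{i}\in\myind{I}(\myind{n})$, the pullback is a nonzero polynomial (a product of nonzero factors $\psi_{j_k}$) whose zero set has empty interior; taking closures over the union~\eqref{eq:supp_xi} yields $\operatorname{supp}(\xi_{\myind{n},\myind{p}})=T(\myind{n})$.

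The heart of the proof is continuity across interfaces, which I would split into the interior faces and the outer boundary. Two boxes $\myref{T}_{\myind{i}},\myref{T}_{\myind{i}'}$ with $\myind{i},\myind{i}'\in\myind{I}(\myind{n})$ sharing a codimension-one face differ in exactly one transverse coordinate $m\notin A(\myind{a})$, say $i_m=0$ and $i'_m=1$, the shared face sitting at $x_m=z_m$. By~\eqref{eq:defi_index_for_h_xi} the $m$-th factors are $\psi_1$ on the $i_m=0$ side and $\psi_0$ on the $i'_m=1$ side; since $F_{I_m}^{-1}$ maps the interface endpoint $z_m$ to $+1$ on the left box and to $-1$ on the right box, these factors evaluate to $\psi_1(1)=1$ and $\psi_0(-1)=1$, while all remaining factors coincide (identical indices and intervals), so the traces agree. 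For the outer boundary of $T(\myind{n})$ I would distinguish the transverse faces, lying in a direction $m\notin A(\myind{a})$ at $x_m=\pm1$, where the factor is $\psi_1$ with $\psi_1(-1)=0$ (for $i_m=0$) or $\psi_0$ with $\psi_0(1)=0$ (for $i_m=1$), from the orientation faces, lying in a direction $k\in A(\myind{a})$, where the factor is $\psi_{p_k}$ with $p_k\ge 2$, so that $\psi_{p_k}(\pm1)=0$ because these are internal modes. In both cases the trace vanishes, matching the zero extension.

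Finally I would glue the pieces: the pullback is continuous on each closed box, its traces agree on every shared interior face, and it vanishes on every outer boundary face, hence it is continuous on $\myref{Q}$; consistency on the lower-dimensional (codimension $\ge 2$) intersections is automatic from the tensor structure, since any two of these boxes can be joined through a chain of face-adjacent boxes along which the traces already coincide. The main obstacle is purely the bookkeeping: correctly identifying, in each coordinate direction, which reference endpoint $\pm1$ a given physical interface corresponds to under $F_{I_k}^{-1}$, together with the case distinction between orientation directions ($k\in A(\myind{a})$) and transverse directions ($k\notin A(\myind{a})$). Once this correspondence is fixed, every interface condition collapses to one of the three endpoint evaluations of $\psi_0$, $\psi_1$, or $\psi_{p_k}$ recorded above.
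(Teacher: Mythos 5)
Your proposal is correct, and at its core it runs on the same engine as the paper's proof: the tensor factorization of each piece into one-dimensional factors $\psi_{j_k}\circ F_{I_k}^{-1}$ via~\eqref{eq:psiT} and~\eqref{eq:defi_index_for_h_xi}, together with the endpoint values $\psi_0(-1)=\psi_1(1)=1$, $\psi_0(1)=\psi_1(-1)=0$ and $\psi_j(\pm1)=0$ for $j\ge 2$; the treatment of the vanishing on $\partial T(\myind{n})$ is essentially identical in both arguments. The genuine difference lies in how continuity \emph{inside} $T(\myind{n})$ is organized, and here your version is actually the more complete one. The paper verifies that the pieces $\zeta_{\myind{j}(\myind{i},\myind{p})}^{\myind{i}}$ coincide on the common $r$-dimensional face $f_{\myind{n}}=\bigcap_{\myind{i}\in\myind{I}(\myind{n})} T_{\myind{i}}$, where \emph{every} transverse coordinate is pinned so that all transverse factors equal $1$, and infers interior continuity from that; but for $r\le d-2$ (e.g.\ the center vertex in 2D) continuity requires agreement on the pairwise $(d-1)$-dimensional interfaces, which strictly contain $f_{\myind{n}}$, and on those the transverse factors in the unpinned directions are \emph{not} $1$. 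You check exactly these interfaces: the flipped coordinate $m$ contributes $\psi_1(1)=\psi_0(-1)=1$, while the remaining factors agree because the two neighbouring boxes carry identical indices $j_k$ and identical intervals $I_k$ for $k\neq m$; lower-dimensional intersections are then handled by chaining face-adjacent boxes, which is sound since each pairwise codimension-one agreement restricts to the smaller intersection. You also supply an actual argument for $\operatorname{supp}(\xi_{\myind{n},\myind{p}})=T(\myind{n})$ (each piece is a nonzero polynomial, whose zero set has empty interior), where the paper merely asserts the support identity. One cosmetic caveat: your framing sentence claims it suffices that the pullback vanish on $\partial\myref{Q}$, whereas $\partial T(\myind{n})$ in general contains orientation faces at $\myref{x}_k=z_k$, $k\in A(\myind{a})$, that are interior to $\myref{Q}$; your subsequent case analysis does cover these via $\psi_{p_k}(\pm1)=0$ with $p_k\ge 2$, so only the framing, not the argument, is imprecise.
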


\begin{proof}
Let $\xi_{\myind{n},\myind{p}}\in\mathfrak{E}_{hp,\myind{n}}$ with $\mathfrak{E}_{hp,\myind{n}}$ from \eqref{eq:general_h_enrichment}, be an arbitrary $hp$-refinement function, for some $\myind{n} =(\myind{a},\myind{\ell})\in\N_r$, $r\in\underline{d}_0$, and $\myind{p}\in\myind{P}(\myind{n})$. Let us denote by
$
 f_{\myind{n}} := \bigcap_{\myind{i}\in\myind{I}(\myind{n})} T_{\myind{i}}
$
the $r$-dimensional internal node of the refinement $\R(Q)$ characterized by $\myind{n}$. Noticing that
\begin{align*}
 \xi_{\myind{n},\myind{p} \, | \, T_{\myind{i}}} 
 = \zeta_{\myind{j}(\myind{i},\myind{p})}^{\myind{i}}
 = \myref{\psi}_{\myind{j}(\myind{i},\myind{p})} \circ F_{\myind{i}}^{-1}
 \qquad \forall \, \myind{i}\in\myind{I}(\myind{n}),
\end{align*}
and $\xi_{\myind{n},\myind{p} \, | \, T_{\myind{i}}} \equiv 0$ on elements $T_{\myind{i}}$ with $\myind{i}\in\lbrace 0,1\rbrace^d \setminus \myind{I}(\myind{n})$ as well as  on $\Omega\setminus Q$ by~\eqref{eq:defi_xi_h_enrichment}, we immediately obtain the continuity of the function $\xi_{\myind{n},\myind{p}}$ on the interior of the subelements $T_{\myind{i}}$, for $\myind{i}\in\myind{I}(\myind{n})$, and on $\Omega\setminus T(\myind{n})$. Furthermore, we have
$
 \operatorname{supp}(\xi_{\myind{n},\myind{p}}) = T(\myind{n}),
$
cf.~\eqref{eq:supp_xi}. Thus, in order to prove the continuity of $\xi_{\myind{n},\myind{p}}$ on $\Omega$,  we need to show that $\xi_{\myind{n},\myind{p}} \equiv 0$ on the boundary $\partial T(\myind{n})$ of the support $T(\myind{n})$, and that the functions $\zeta_{\myind{j}(\myind{i},\myind{p})}^{\myind{i}}$, for any $\myind{i}\in\myind{I}(\myind{n})$, coincide on the $r$-dimensional face $f_{\myind{n}}$. 

Since $\myind{p}=(p_1,\ldots,p_r)\in\myind{P}(\myind{n})$, we have $p_k\geq 2$ for any $k\in\underline{r}$. Hence, it holds that
$
  j_{k,\myind{i}} \geq 2 \text{ for } k\in A(\myind{a})
  $, and
  $ j_{k,\myind{i}}\in\lbrace 0,1\rbrace \text{ for } k\notin A(\myind{a})
$,
where the indices $(j_{1,\myind{i}},\ldots,j_{d,\myind{i}})=\myind{j}(\myind{i},\myind{p})\in\NN_0^d$ are defined in \eqref{eq:defi_index_for_h_xi}. To ensure that the functions
\begin{align}\label{eq_proof:prod_zetas}
 \zeta_{\myind{j}(\myind{i},\myind{p})}^{\myind{i}}(\myvec{x})
 = \prod_{k\in\underline{d}} \psi_{j_{k,\myind{i}}}(\myref{x}_k),
\qquad\myref{\myvec{x}} = (\myref{x}_1,\ldots,\myref{x}_d)^{\top} = F_{\myind{i}}^{-1}(\myvec{x}), \qquad
 \myind{i}\in\myind{I}(\myind{n}),
\end{align}
with the one dimensional functions $\psi_k$ from \eqref{eq:defi_1D_psi}, coincide for $\myvec{x}= (x_1,\ldots,x_d)^{\top}\in f_{\myind{n}}$, we first show that $\psi_{j_{k,\myind{i}}}(\widehat{x}_k)=1$ for any $k\notin A(\myind{a})$. Indeed, for $\myind{i}=(i_1,\ldots,i_d)\in\myind{I}(\myind{n})$ we have
\begin{align}\label{eq_proof:representation_node}
 f_{\myind{n}} = \big\lbrace F_{\myind{i}}(\myref{\myvec{x}}) : \myref{\myvec{x}} = (\myref{x}_1,\ldots,\myref{x}_d)^{\top}\in\myref{Q}, \text{ and } \myref{x}_k = 1 - 2\, i_k \text{ for } k\notin A(\myind{a})  \big\rbrace,
\end{align}
and because of
\begin{align*}
 j_{k,\myind{i}} = \begin{cases}
 1, & \text{if } i_k = 0, \\
 0, & \text{if } i_k = 1,
 \end{cases}
 \qquad k\notin A(\myind{a}),
\end{align*}
cf.~\eqref{eq:defi_index_for_h_xi}, we obtain
\begin{align*}
 \psi_{j_{k,\myind{i}}}(\myref{x}_k) = \begin{cases}
 \psi_1(1) = 1, & \text{if } i_k = 0, \\
 \psi_0(-1) = 1, & \text{if } i_k = 1,
 \end{cases} \qquad k\notin A(\myind{a}),
\end{align*}
owing to~\eqref{eq:defi_1D_psi} and \eqref{eq_proof:representation_node}. Next, for $k\in A(\myind{a})$ it holds that
$
 \psi_{j_{k,\myind{i}}}(\myref{x}_k) = \psi_{p_k}(\myref{x}_k)$, cf.~\eqref{eq:defi_index_for_h_xi}. Thus,
\begin{align*}
 \zeta_{\myind{j}(\myind{i},\myind{p})}^{\myind{i}}(\myvec{x})
 = \prod_{k\in A(\myind{a})} \psi_{p_k}(\myref{x}_k),
 \qquad
 \myind{i}\in\myind{I}(\myind{n}),
\end{align*}
which shows the continuity of the enrichment function $\xi_{\myind{n},\myind{p}}$ in the interior of the support $T(\myind{n})$. Finally, we observe that  $\psi_0(1) = \psi_1(-1) =0$, and
$
 \psi_j(\pm 1) = 0,
$
for any $j\in\NN$ with $j\geq 2$. Then, exploiting that
\begin{align*}
 \partial T(\myind{n}) \setminus f_{\myind{n}}
 = \bigcup_{\myind{i}\in\myind{I}(\myind{n})} \big\lbrace F_{\myind{i}}(\myref{\myvec{x}}) : \myref{\myvec{x}}\in\myref{Q} \text{ and } \exists \, k\in A(\myind{a}): \myref{x}_k = \pm 1 \; \vee \; \exists \, k \notin A(\myind{a}): \myref{x}_k = 2 \, i_k - 1 \big\rbrace,
\end{align*}
and applying~\eqref{eq_proof:prod_zetas}, we arrive at $\zeta_{\myind{j}(\myind{i},\myind{p})}^{\myind{i}} (\myvec{x}) = 0$, for any $\myvec{x}\in\partial T(\myind{n})\setminus f_{\myind{n}}$, $\myind{i}\in\myind{I}(\myind{n})$,  This completes the argument.
\end{proof}

\begin{remark}[$\myind{P}(\myind{n})$ for uniform polynomial degrees]
Again, there are various possibilities to specify $hp$-enrichment functions. Note that in the definition of $\mathfrak{E}_{hp}$, the maximal polynomial degree can differ on each node $\myind{n}\in\N$, however, we could also consider the special case of a uniform polynomial degree distribution $p_{\text{unif}}\in\lbrace 2,\ldots, p_{\max} \rbrace$ on each element $T_{\myind{i}}\in\R(Q)$, $\myind{i}\in\lbrace 0,1\rbrace^d$, obtained by choosing $\myind{P}(\myind{n}) = \lbrace 2,\ldots, p_{\text{unif}}\rbrace^r$ for any $\myind{n}\in\N_r$, $r\in\underline{d}$; see Example~\ref{ex:5} below for further comments on this particular choice.
\end{remark}


\subsection{Representation matrices}\label{subsec:representation_matrices}

Owing to the previous Propositions~\ref{prop:properties_penrichmentF} and \ref{prop:properties_henrichmentF} the sum over all elements $T\in\T$ in \eqref{eq:assembling_1} reduces to a sum over all elements $T_{\myind{i}}$ in the refinement $\R(Q)$. In particular, the representation matrices $\mymatrix{C}_{\myind{i}}$ and $\mymatrix{D}_{\myind{i}}$, see the notation in \S\ref{sec:1element}, need to be specified only for all $T_{\myind{i}}\in\R(Q)$, $\myind{i}\in\lbrace 0,1\rbrace^d$. Recalling Proposition~\ref{prop:assembling}, we remark that these (local) quantities, in turn, are required to assemble the matrix $\mymatrix{A}$ and the vectors $\myvec{b},\myvec{c}$ occurring in the linear system \eqref{eq:linsys}.

In order to compute the matrices $\mymatrix{C}_{\myind{i}}$ and $\mymatrix{D}_{\myind{i}}$ on each element $T_{\myind{i}}\in\R(Q)$, $\myind{i}\in\lbrace 0,1\rbrace^d$, we employ the constraint coefficients technique from \S\ref{subsec:constraint_coeff}. To this end, let $\mathfrak{E}_{Q} := \lbrace\xi_1,\ldots,\xi_L\rbrace$ be a set of $p$- or $hp$-enrichment functions on $Q$. On each $T_{\myind{i}}\in\R(Q)$, $\myind{i}\in\lbrace 0,1\rbrace^d$, for $\myind{j}\in \lbrace0,\ldots,p_{\max}\rbrace^d$, we consider the functions $\zeta_{\myind{j}}^{\myind{i}}$ from~\eqref{eq:zetas}, which we enumerate by the bijective map $\iota : \lbrace 0,\ldots, p_{\max}\rbrace^d \to \underline{M}$, with $M := (p_{\max} + 1)^d$, given by
\begin{align}\label{eq:enumeration_01}
 \iota(\myind{j}) := 1 + \sum_{k\in\underline{d}} (p_{\max}+1)^{k-1} \, j_k,
  \qquad \myind{j}=(j_1,\ldots,j_d),
\end{align}
i.e. we apply the renumbering $\zeta_{\iota(\myind{j})}^{\myind{i}} := \zeta_{\myind{j}}^{\myind{i}}$, for
$\myind{j}\in\lbrace 0,\ldots,p_{\max} \rbrace^d.$
Note that the functions $\lbrace \zeta_1^{\myind{i}},\ldots,\zeta_M^{\myind{i}}\rbrace$, extended by zero to $\Omega$, are discontinuous and represent a basis of a (local) discontinuous Galerkin space on $\R(Q)$. Given a basis $\lbrace \phi_1, \ldots, \phi_N\rbrace$ of the finite element space $\WW$, cf.~\eqref{eq:FE_space}, recall that the components of the matrix $\mymatrix{C}_{\myind{i}} = (\, c_{kl}^{\myind{i}} \,)\in\RR^{N\times M}$, for $\myind{i}\in\lbrace 0,1\rbrace^d$, cf.~\eqref{eq:rep_phi_xi_01}, are determined by 
\begin{align*}
 \phi_{k \,| \, T_{\myind{i}}} &= \sum_{l\in\underline{M}} c_{kl}^{\myind{i}} \, \zeta_l^{\myind{i}},
  \qquad k\in\underline{N}, 
\end{align*}
and the components of $\mymatrix{D}_{\myind{i}}=( \, d_{kl}^{\myind{i}} \,)\in\RR^{L\times M}$, $\myind{i}\in\lbrace 0,1\rbrace^d$, cf.~\eqref{eq:rep_phi_xi_02}, are given by 
\begin{align}
  \xi_{k \,| \, T_{\myind{i}}} &= \sum_{l\in\underline{M}} d_{kl}^{\myind{i}} \, \zeta_l^{\myind{i}},
 \qquad k\in\underline{L}. \label{eq:rep_phi_xi_neu_02}
\end{align}


\subsubsection{Computation of the matrices $\mymatrix{C}_{\myind{i}}$}

On an element $Q\in\Q$,  for any $\myind{j}\in\lbrace 0,\ldots,p_{\max}\rbrace^d$, we introduce the functions $\psi_1^{Q}, \ldots, \psi_M^{Q}$ by
\begin{align*}
 \psi_{\iota(\myind{j})}^{Q}(\myvec{x}) := \myref{\psi}_{\myind{j}} \circ F_{Q}^{-1}(\myvec{x}),
  \qquad \myvec{x}\in Q,
\end{align*}
with the bijective map $\iota$ from \eqref{eq:enumeration_01}. Furthermore, let $\mymatrix{C}_{Q} = \big(\, c_{kl}^{Q} \,\big)\in\RR^{N\times M}$ denote the representation matrix, for which it holds
\begin{align*}
 \phi_{k \, | \, Q} = \sum_{l\in\underline{M}} c_{kl}^{Q} \, \psi_l^{Q},
  \qquad k\in\underline{N}.
\end{align*}
Finally, for any $\myind{i}\in\lbrace 0,1\rbrace^d$, let us define  the matrix $\mymatrix{B}_{\myind{i}} = (\, b_{kl}^{\myind{i}} \,)\in\RR^{M\times M}$ component-wise by
$
 b_{kl}^{\myind{i}} := b_{\myind{k}\myind{l}}^{T_{\myind{i}}},
$
where $k = \iota(\myind{k})$, $l = \iota(\myind{l})$, for any $\myind{k},\myind{l}\in\lbrace 0,\ldots,p_{\max}\rbrace^d$, and $b_{\myind{k}\myind{l}}^{T_{\myind{i}}}$ are the constrained coefficients from \eqref{eq:representation_restriction}.

\begin{proposition}
For any $\myind{i}\in\lbrace 0,1\rbrace^d$ it holds that $\mymatrix{C}_{\myind{i}} =  \mymatrix{C}_{Q} \, \mymatrix{B}_{\myind{i}}$.
\end{proposition}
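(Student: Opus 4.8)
The plan is to factor the restriction of a basis function $\phi_k$ to the subelement $T_{\myind{i}}$ through its restriction to the parent element $Q$. First I would restrict $\phi_k$ to $Q$, which by definition of $\mymatrix{C}_{Q}$ reads $\phi_{k\,|\,Q} = \sum_{l\in\underline M} c_{kl}^{Q}\,\psi_l^{Q}$. Then I would further restrict each $\psi_l^{Q}$ to $T_{\myind{i}}\subset Q$ and expand it in the local basis $\{\zeta_1^{\myind{i}},\ldots,\zeta_M^{\myind{i}}\}$; the claim to be verified is that the coefficient matrix of this second change of basis is precisely $\mymatrix{B}_{\myind{i}}$. Chaining the two expansions then exhibits $\phi_{k\,|\,T_{\myind{i}}}$ as a linear combination of the $\zeta_m^{\myind{i}}$ with coefficient matrix $\mymatrix{C}_{Q}\mymatrix{B}_{\myind{i}}$, and since $\{\zeta_m^{\myind{i}}\}$ is a basis (cf.~\S\ref{subsec:representation_matrices}), comparison with the defining relation $\phi_{k\,|\,T_{\myind{i}}} = \sum_{l\in\underline M} c_{kl}^{\myind{i}}\,\zeta_l^{\myind{i}}$ of $\mymatrix{C}_{\myind{i}}$ and uniqueness of the coefficients yields $\mymatrix{C}_{\myind{i}} = \mymatrix{C}_{Q}\mymatrix{B}_{\myind{i}}$.

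The essential geometric ingredient is the elementwise identity $F_{\myind{i}} = F_{Q}\circ F_{\myref{T}_{\myind{i}}}$, where $F_{\myref{T}_{\myind{i}}}:\myref{Q}\to\myref{T}_{\myind{i}}$ is the reference sub-hexahedron map and $\myref{T}_{\myind{i}} = F_{Q}^{-1}(T_{\myind{i}})$. I would establish this by observing that both sides are multilinear maps $\myref{Q}\to\RR^d$: the map $F_{Q}$ is multilinear by \eqref{eq:elem_transform}, while $F_{\myref{T}_{\myind{i}}}$ is a dilation-translation in each coordinate (by the tensor structure of $\myref{T}_{\myind{i}}$), so the composition is again of degree $\le 1$ in each reference variable, hence multilinear. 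Since a multilinear map on $[-1,1]^d$ is determined by its values at the $2^d$ vertices $\myref{\myvec{v}}_{\myind{a}}$, $\myind{a}\in\lbrace 0,1\rbrace^d$, and both maps carry $\myref{\myvec{v}}_{\myind{a}}$ to the corresponding vertex of $T_{\myind{i}}$ (because $T_{\myind{i}} = F_{Q}(\myref{T}_{\myind{i}})$ with the vertex indexing of \S\ref{subsec:enrichmentF}), the two agree. Inverting gives $F_{\myind{i}}^{-1} = F_{\myref{T}_{\myind{i}}}^{-1}\circ F_{Q}^{-1}$.

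With this in hand, I would apply the constraint coefficient representation \eqref{eq:representation_restriction} on the sub-hexahedron $\myref{T}_{\myind{i}}\subset\myref{Q}$, namely $\myref{\psi}_{\myind{l}\,|\,\myref{T}_{\myind{i}}} = \sum_{\myind{m}\le\myind{l}} b_{\myind{l}\myind{m}}^{\myref{T}_{\myind{i}}}\,(\myref{\psi}_{\myind{m}}\circ F_{\myref{T}_{\myind{i}}}^{-1})$, where $b_{\myind{l}\myind{m}}^{\myref{T}_{\myind{i}}} = b_{\myind{l}\myind{m}}^{T_{\myind{i}}}$ are exactly the constraint coefficients feeding $\mymatrix{B}_{\myind{i}}$ (they depend only on the box $\myref{T}_{\myind{i}}$, not on $F_{Q}$). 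Precomposing this identity with $F_{Q}^{-1}$ and using $F_{\myref{T}_{\myind{i}}}^{-1}\circ F_{Q}^{-1} = F_{\myind{i}}^{-1}$ turns the left-hand side into $\psi_{l\,|\,T_{\myind{i}}}^{Q}$ and the right-hand side into $\sum_{\myind{m}\le\myind{l}} b_{\myind{l}\myind{m}}^{T_{\myind{i}}}\,(\myref{\psi}_{\myind{m}}\circ F_{\myind{i}}^{-1}) = \sum_{\myind{m}\le\myind{l}} b_{\myind{l}\myind{m}}^{T_{\myind{i}}}\,\zeta_{\myind{m}}^{\myind{i}}$. After the renumbering $\iota$ of \eqref{eq:enumeration_01}, padding the coefficients by zero for $\myind{m}\not\le\myind{l}$, this reads $\psi_{l\,|\,T_{\myind{i}}}^{Q} = \sum_{m\in\underline M} b_{lm}^{\myind{i}}\,\zeta_m^{\myind{i}}$, which is the asserted change of basis by $\mymatrix{B}_{\myind{i}}$; substituting into $\phi_{k\,|\,Q} = \sum_l c_{kl}^{Q}\psi_l^{Q}$ and matching coefficients closes the argument.

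The main obstacle is the composition identity $F_{\myind{i}} = F_{Q}\circ F_{\myref{T}_{\myind{i}}}$: this is the only place where the multilinear structure of the element map is genuinely exploited, and it is what permits transferring the reference constraint coefficients to the physical subelement without spurious Jacobian factors, so that $b_{\myind{l}\myind{m}}^{T_{\myind{i}}}$ may legitimately be identified with the reference quantity $b_{\myind{l}\myind{m}}^{\myref{T}_{\myind{i}}}$. The remainder is bookkeeping: reconciling the multi-index sums $\sum_{\myind{m}\le\myind{l}}$ with the flattened ranges under $\iota$, and confirming that the coefficients vanish for $\myind{m}\not\le\myind{l}$ so that the upper-triangular $\mymatrix{B}_{\myind{i}}$ is consistent with a sum over all of $\underline M$. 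Uniqueness of the expansion in the basis $\{\zeta_m^{\myind{i}}\}$, already recorded in \S\ref{subsec:representation_matrices}, then renders the final coefficient comparison rigorous.
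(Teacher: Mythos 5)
Your proposal is correct and follows essentially the same route as the paper's proof: expand $\phi_{k\,|\,Q}$ via $\mymatrix{C}_{Q}$, restrict to the subelement using the constraint-coefficient representation \eqref{eq:representation_restriction} on the reference sub-hexahedron $\myref{T}_{\myind{i}}$, transfer to the physical element through the composition $F_{\myind{i}} = F_{Q}\circ \myref{F}_{\myind{i}}$, and conclude by uniqueness of the expansion in the basis $\lbrace \zeta_1^{\myind{i}},\ldots,\zeta_M^{\myind{i}}\rbrace$. Your multilinearity-and-vertex-matching argument for the identity $F_{\myind{i}} = F_{Q}\circ \myref{F}_{\myind{i}}$ (and your explicit remark that the coefficients $b_{\myind{l}\myind{m}}^{T_{\myind{i}}}$ depend only on the box $\myref{T}_{\myind{i}}$, not on $F_{Q}$) supplies detail that the paper simply asserts in passing, but it does not change the structure of the argument.
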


\begin{proof}
For any $k\in\underline{N}$, we have
\begin{align*}
 \phi_{k \, | \, Q} = \sum_{l\in\underline{M}} c_{kl}^{Q} \, \psi_l^Q
 = \sum_{l\in\underline{M}} c_{kl}^{Q} \,\left( \myref{\psi}_l \circ F_{Q}^{-1}\right).
\end{align*} 
Therefore, we infer that
\begin{align*}
 \phi_{k \,| \, T_{\myind{i}}} \circ F_{Q} = \sum_{l\in\underline{M}} c_{kl}^{Q} \, \myref{\psi}_{l \, |\myref{T}_{\myind{i}}},
 \qquad \myind{i}\in\lbrace 0,1\rbrace^d.
\end{align*}
Denoting by $\myref{F}_{\myind{i}} : \myref{Q}\to \myref{T}_{\myind{i}}$ the bijective map from $\myref{Q}$ to the sub-hexahedron $\myref{T}_{\myind{i}}$, and noting that $F_{\myind{i}} = F_{Q} \circ \myref{F}_{\myind{i}}$,  the restrictions to $\myref{T}_{\myind{i}}$ can be represented by
\begin{align*}
 \myref{\psi}_{l \, |\myref{T}_{\myind{i}}}
 = \sum_{r\in\underline{M}} b_{\myind{l}\myind{r}}^{T_{\myind{i}}} \, \left(\myref{\psi}_{r} \circ \myref{F}_{\myind{i}}^{-1}\right),
\end{align*}
for any $\myind{i}\in\lbrace 0,1\rbrace^d$, cf.~\eqref{eq:representation_restriction}, where $\myind{l} = \iota^{-1}(l)$ and $\myind{r} = \iota^{-1}(r)$. Thus,
\begin{align*}
\phi_{k \,| \, T_{\myind{i}}} 
 = \sum_{l\in\underline{M}} c_{kl}^{Q} \sum_{r\in\underline{M}} b_{\myind{l}\myind{r}}^{T_{\myind{i}}} \, \left(\myref{\psi}_r \circ \myref{F}_{\myind{i}}^{-1} \circ F_{Q}^{-1}\right)
 = \sum_{l\in\underline{M}} c_{kl}^{Q} \sum_{r\in\underline{M}} b_{\myind{l}\myind{r}}^{T_{\myind{i}}} \, \left(\myref{\psi}_r \circ F_{\myind{i}}^{-1}\right).
\end{align*}
Since $\zeta_r^{\myind{i}} = \myref{\psi}_r \circ F_{\myind{i}}^{-1}$, we obtain
\vspace{-0.6cm}\\
\begin{align*}
 \phi_{k \,| \, T_{\myind{i}}} 
 = \sum_{l\in\underline{M}} c_{kl}^{Q} \sum_{r\in\underline{M}} b_{\myind{l}\myind{r}}^{T_{\myind{i}}} \, \zeta_r^{\myind{i}}
 = \sum_{r\in\underline{M}} \Bigg( \sum_{l\in\underline{M}} c_{kl}^{Q} \, b_{\myind{l}\myind{r}}^{T_{\myind{i}}} \Bigg) \zeta_r^{\myind{i}},
\end{align*}
from which we deduce
\begin{align*}
 c_{kr}^{\myind{i}} 
 = \sum_{l\in\underline{M}} c_{kl}^{Q} \, b_{\myind{l}\myind{r}}^{T_{\myind{i}}},
\end{align*}
for any $k\in\underline{N}$ and $r\in\underline{M}$. Thus, $\mymatrix{C}_{\myind{i}} = \mymatrix{C}_{Q} \, \mymatrix{B}_{\myind{i}}$ for any $\myind{i}\in\lbrace 0,1\rbrace^d$.
\end{proof}


\subsubsection{Computation of $\mymatrix{D}_{\myind{i}}$ for a $p$-enrichment on $Q$}

In this case the enrichment functions $\mathfrak{E}_p = \lbrace \xi_1,\ldots,\xi_L\rbrace$ represent a certain collection of the functions $\xi_{\myind{j}}$ in~\eqref{eq:defi_xi_p_enrichment}, with $\myind{j}$ from a multi-index set
\begin{align*}
 \myind{J}_d \subseteq \big\lbrace \myind{j}\in\NN_0^d : j_k \geq 2 \text{ for } k\in\underline{d} \big\rbrace, \qquad
 |\myind{J}_d| = L.
\end{align*}
As in the previous section, we apply a renumbering $\xi_{\kappa(\myind{j})} := \xi_{\myind{j}}$ in terms of a bijective map $\kappa : \myind{J}_d \to \underline{L}$; for instance, if we choose $\myind{J}_d := \lbrace 2,\ldots,p_{\max}\rbrace^d$, then the resulting $p$-enrichment functions $\xi_1,\ldots,\xi_L$, with $L = (p_{\max}-1)^d$, can be enumerated by
\begin{align*}
 \kappa(\myind{j}) := 1 + \sum_{k\in\underline{d}} (p_{\max}-1)^{k-1} (j_k-2), 
  \qquad \myind{j} = (j_1,\ldots,j_d).
\end{align*}

\begin{proposition}
For any $\myind{i}\in\lbrace 0,1\rbrace^d$, the matrix $\mymatrix{D}_{\myind{i}}\in\RR^{L\times M}$ is given component-wise by
$
 d_{kl}^{\myind{i}} = b_{\myind{k}\myind{l}}^{T_{\myind{i}}},
$
for $k = \kappa(\myind{k})$ and $l = \iota(\myind{l})$.
\end{proposition}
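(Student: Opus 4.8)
The plan is to follow the same strategy as in the preceding proof of $\mymatrix{C}_{\myind{i}}=\mymatrix{C}_Q\mymatrix{B}_{\myind{i}}$, only now applied directly to the $p$-enrichment functions $\xi_{\myind{k}}$, which by \eqref{eq:defi_xi_p_enrichment} live on $Q$ and vanish outside. First I would fix $\myind{i}\in\lbrace 0,1\rbrace^d$ and $k=\kappa(\myind{k})\in\underline{L}$, so that $\xi_k=\xi_{\myind{k}}$ with $\myind{k}\in\myind{J}_d$. Since $T_{\myind{i}}=F_Q(\myref{T}_{\myind{i}})\subset Q$, the definition \eqref{eq:defi_xi_p_enrichment} reduces the restriction to $\xi_{k\,|\,T_{\myind{i}}}=\big(\myref{\psi}_{\myind{k}}\circ F_Q^{-1}\big)_{\,|\,T_{\myind{i}}}$; writing $\myref{\myvec{x}}=F_Q^{-1}(\myvec{x})\in\myref{T}_{\myind{i}}$ for $\myvec{x}\in T_{\myind{i}}$, this is just $\myref{\psi}_{\myind{k}\,|\,\myref{T}_{\myind{i}}}$ evaluated at $\myref{\myvec{x}}$. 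Thus the task becomes a purely reference-element question about representing the restriction of $\myref{\psi}_{\myind{k}}$ to the sub-hexahedron $\myref{T}_{\myind{i}}$ of $\myref{Q}$.

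Next I would invoke the constraint coefficient representation \eqref{eq:representation_restriction}, applied to $\myref{T}_{\myind{i}}$, to expand $\myref{\psi}_{\myind{k}\,|\,\myref{T}_{\myind{i}}}=\sum_{\myind{l}\leq\myind{k}} b_{\myind{k}\myind{l}}^{T_{\myind{i}}}\,\big(\myref{\psi}_{\myind{l}}\circ\myref{F}_{\myind{i}}^{-1}\big)$, with $\myref{F}_{\myind{i}}:\myref{Q}\to\myref{T}_{\myind{i}}$. The key geometric identity to record is $F_{\myind{i}}=F_Q\circ\myref{F}_{\myind{i}}$ (already used for $\mymatrix{C}_{\myind{i}}$), which gives $\myref{F}_{\myind{i}}^{-1}\circ F_Q^{-1}=F_{\myind{i}}^{-1}$; substituting $\myref{\myvec{x}}=F_Q^{-1}(\myvec{x})$ then turns each $\big(\myref{\psi}_{\myind{l}}\circ\myref{F}_{\myind{i}}^{-1}\big)(\myref{\myvec{x}})$ into $\myref{\psi}_{\myind{l}}\big(F_{\myind{i}}^{-1}(\myvec{x})\big)=\zeta_{\myind{l}}^{\myind{i}}(\myvec{x})=\zeta_{\iota(\myind{l})}^{\myind{i}}(\myvec{x})$, with $\zeta_{\myind{l}}^{\myind{i}}$ from \eqref{eq:zetas} and the renumbering $\iota$ of \eqref{eq:enumeration_01}. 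Collecting terms yields $\xi_{k\,|\,T_{\myind{i}}}=\sum_{l\in\underline{M}} b_{\myind{k}\myind{l}}^{T_{\myind{i}}}\,\zeta_{l}^{\myind{i}}$, where $\myind{l}=\iota^{-1}(l)$ and the coefficients with $\myind{l}\not\leq\myind{k}$ vanish.

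Finally, comparing this expansion with the defining relation \eqref{eq:rep_phi_xi_neu_02} and using that $\lbrace\zeta_1^{\myind{i}},\ldots,\zeta_M^{\myind{i}}\rbrace$ is a basis (so the representation is unique), I would read off $d_{kl}^{\myind{i}}=b_{\myind{k}\myind{l}}^{T_{\myind{i}}}$ for $k=\kappa(\myind{k})$, $l=\iota(\myind{l})$, as claimed.

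I do not anticipate any genuine analytic difficulty here: the computation is essentially a transcription of the $\mymatrix{C}_{\myind{i}}$ argument, without the additional matrix product. The only points deserving care are bookkeeping ones, namely keeping the three maps $F_Q$, $\myref{F}_{\myind{i}}$, $F_{\myind{i}}$ apart and correctly using $F_{\myind{i}}=F_Q\circ\myref{F}_{\myind{i}}$ to identify the transformed reference functions with the local basis $\zeta_l^{\myind{i}}$, and checking that every index $\myind{l}\leq\myind{k}$ occurring in \eqref{eq:representation_restriction} lies in $\lbrace 0,\ldots,p_{\max}\rbrace^d$ so that $\iota(\myind{l})\in\underline{M}$ is defined. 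This last point holds precisely because $\myind{k}\in\myind{J}_d\subseteq\lbrace 2,\ldots,p_{\max}\rbrace^d$, which at the same time guarantees that the restriction $\xi_{k\,|\,T_{\myind{i}}}$ really does lie in the span of the $\zeta_l^{\myind{i}}$.
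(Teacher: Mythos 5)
Your argument is correct and is essentially the paper's own proof run in the opposite direction: where the paper starts from the defining relation \eqref{eq:rep_phi_xi_neu_02} and pulls everything back to the reference element to match the constraint-coefficient expansion \eqref{eq:representation_restriction}, you start from that expansion on $\myref{T}_{\myind{i}}$ and push forward via $F_{\myind{i}}=F_Q\circ\myref{F}_{\myind{i}}$ to identify the $\zeta_l^{\myind{i}}$, concluding in both cases by uniqueness of the representation in the basis $\lbrace\zeta_1^{\myind{i}},\ldots,\zeta_M^{\myind{i}}\rbrace$. Your added bookkeeping check that every $\myind{l}\leq\myind{k}$ with $\myind{k}\in\myind{J}_d\subseteq\lbrace 2,\ldots,p_{\max}\rbrace^d$ yields a valid index $\iota(\myind{l})\in\underline{M}$ is a point the paper leaves implicit, but it does not change the substance of the argument.
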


\begin{proof}
By the definition \eqref{eq:defi_xi_p_enrichment} of $p$-enrichment functions we have
$
 \xi_{k \, | \, T_{\myind{i}}}
 = \myref{\psi}_{k \, | \, \myref{T}_{\myind{i}}} \circ F_{Q},
$
for $\myind{i}\in\lbrace 0,1\rbrace^d$ and any $k\in \underline{L}$. Hence, using the bijective map $\myref{F}_{\myind{i}}:\myref{Q} \to \myref{T}_{\myind{i}}$ we obtain 
\begin{align*}
 \myref{\psi}_{k \, | \, \myref{T}_{\myind{i}}} 
 = \sum_{l\in\underline{M}} d_{kl}^{\myind{i}} \, \left(\zeta_l^{\myind{i}} \circ F_{Q}^{-1}\right)
 = \sum_{l\in\underline{M}} d_{kl}^{\myind{i}} \, \left(\myref{\psi}_l \circ F_{\myind{i}}^{-1} \circ F_{Q}^{-1}\right)
 = \sum_{l\in\underline{M}} d_{kl}^{\myind{i}} \, \left(\myref{\psi}_l \circ \myref{F}_{\myind{i}}^{-1}\right)
\end{align*}
by the representation \eqref{eq:rep_phi_xi_neu_02}. Hence, by virtue of~\eqref{eq:representation_restriction}, we arrive at $d_{kl}^{\myind{i}} = b_{\myind{k}\myind{l}}^{T_{\myind{i}}}$ for $k = \kappa(\myind{k})$ and $l = \iota(\myind{l})$.
\end{proof}


\subsubsection{Computation of $\mymatrix{D}_{\myind{i}}$ for an $hp$-refinement on $Q$}

We consider $hp$-enrichment functions $\mathfrak{E}_{hp} = \lbrace \xi_1,\ldots,\xi_L\rbrace$ from a certain selection of functions $\xi_{\myind{n},\myind{p}}$ in~\eqref{eq:defi_xi_h_enrichment}. Here, each $\myind{n} = (\myind{a},\myind{\ell})\in\N$ corresponds to an internal node of the refinement $\R(Q)$, which is characterized by the tuples $\myind{a}\in D_r$ and $\myind{\ell}\in\lbrace 0,1\rbrace^r$, for some $r\in\underline{d}_0$, cf.~\S\ref{subsec:hp_enrichments}. Moreover, $\myind{p}\in\myind{P}(\myind{n})$ represents the polynomial distribution for the directions $k\in A(\myind{a})$, where
\begin{align*}
 \myind{P}(\myind{n})\subseteq \big\lbrace \myind{p}\in\NN_0^r : p_k\geq 2 \text{ for } k\in\underline{r} \big\rbrace, \qquad
 \text{with} \quad
 |\myind{P}(\myind{n})| < \infty.
\end{align*}
Given a polynomial degree distribution $(p_{\myind{i}})_{\myind{i}\in\lbrace 0,1\rbrace^d}$ on the subelements $T_{\myind{i}}\in\R(Q)$,  with $p_{\myind{i}}\in\lbrace 1,\ldots,p_{\max}\rbrace$, we can associate a \emph{nodal polynomial degree} $p_{\myind{n}}$ to any node $\myind{n}\in\N_r$, with $r\in\underline{d}$, by means of the minimum-rule
\begin{align*}
 p_{\myind{n}} := \min_{\myind{i}\in T(\myind{n})} p_{\myind{i}}
\end{align*}
In that case, for any node $\myind{n}\in\N_r$, we set $\myind{P}(\myind{n}) = \lbrace 2,\ldots,p_{\myind{n}}\rbrace^r$ (with the convention $\myind{P}(((),())) = \lbrace ()\rbrace$ for $r=0$), and we can enumerate the functions $\xi_{\myind{n},\myind{p}}$ in terms of a bijective map $\nu : \big\lbrace (\myind{n},\myind{p}) : \myind{n}\in\N, \, \myind{p}\in\myind{P}(\myind{n}) \big\rbrace \to \underline{L}$. 

\begin{proposition}
For any $\myind{i}\in\lbrace 0,1\rbrace^d$ the matrix $\mymatrix{D}_{\myind{i}}\in\RR^{L\times M}$ is given component-wise by
\begin{align*}
 d_{kl}^{\myind{i}} = \begin{cases}
  1, & \text{if } \xi_{\myind{n},\myind{p}} = \zeta_{\myind{j}}^{\myind{i}}, \\
  0, & \text{otherwise},
 \end{cases}
\end{align*}
for $k = \nu(\myind{n},\myind{p})$ and $l = \iota(\myind{j})$.
\end{proposition}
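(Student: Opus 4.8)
The plan is to read the entries $d_{kl}^{\myind{i}}$ directly off the defining relation~\eqref{eq:rep_phi_xi_neu_02}, namely $\xi_{k \, | \, T_{\myind{i}}} = \sum_{l\in\underline{M}} d_{kl}^{\myind{i}} \, \zeta_l^{\myind{i}}$, so that for each fixed $\myind{i}\in\lbrace 0,1\rbrace^d$ the $k$-th row of $\mymatrix{D}_{\myind{i}}$ is precisely the coordinate vector of the restriction $\xi_{k \, | \, T_{\myind{i}}}$ with respect to the local basis $\lbrace \zeta_1^{\myind{i}},\ldots,\zeta_M^{\myind{i}} \rbrace$ on $T_{\myind{i}}$. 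Fixing a row index $k = \nu(\myind{n},\myind{p})$, with $\myind{n}=(\myind{a},\myind{\ell})\in\N$ and $\myind{p}\in\myind{P}(\myind{n})$, we then have $\xi_k = \xi_{\myind{n},\myind{p}}$, and the whole computation reduces to inspecting the piecewise definition~\eqref{eq:defi_xi_h_enrichment} of $\xi_{\myind{n},\myind{p}}$ on a single subelement $T_{\myind{i}}$.

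First I would split into the two cases dictated by~\eqref{eq:defi_xi_h_enrichment}. If $\myind{i}\in\myind{I}(\myind{n})$, then by definition $\xi_{\myind{n},\myind{p} \, | \, T_{\myind{i}}} = \zeta_{\myind{j}(\myind{i},\myind{p})}^{\myind{i}}$, i.e.\ the restriction coincides with one single basis function, carrying the multi-index $\myind{j} = \myind{j}(\myind{i},\myind{p})$ from~\eqref{eq:defi_index_for_h_xi}. Invoking the linear independence of $\lbrace \zeta_1^{\myind{i}},\ldots,\zeta_M^{\myind{i}} \rbrace$ (they form a basis of the local discontinuous Galerkin space, cf.~\S\ref{subsec:representation_matrices}), the coordinate vector of $\zeta_{\myind{j}}^{\myind{i}}$ is the standard unit vector indexed by $l = \iota(\myind{j})$, under the renumbering $\zeta_{\iota(\myind{j})}^{\myind{i}} = \zeta_{\myind{j}}^{\myind{i}}$ from~\eqref{eq:enumeration_01}; hence $d_{kl}^{\myind{i}} = 1$ for that single $l$, and $d_{km}^{\myind{i}} = 0$ for all $m\neq l$. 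If, on the other hand, $\myind{i}\notin\myind{I}(\myind{n})$, then $T_{\myind{i}}$ is disjoint (up to its boundary) from the support $T(\myind{n})$, cf.~\eqref{eq:supp_xi}, so $\xi_{\myind{n},\myind{p} \, | \, T_{\myind{i}}} \equiv 0$, and the same linear independence forces the entire row to vanish, $d_{kl}^{\myind{i}} = 0$ for all $l\in\underline{M}$.

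Finally, I would reconcile the two cases with the compact statement by checking that the condition ``$\xi_{\myind{n},\myind{p}} = \zeta_{\myind{j}}^{\myind{i}}$'' appearing in the proposition is exactly equivalent to ``$\myind{i}\in\myind{I}(\myind{n})$ and $\myind{j} = \myind{j}(\myind{i},\myind{p})$'', read as an equality of restrictions on $T_{\myind{i}}$; with $l = \iota(\myind{j})$ this reproduces the asserted $0$/$1$ pattern. The only real care required is the index bookkeeping, i.e.\ threading the maps $\nu$ (enumerating the pairs $(\myind{n},\myind{p})$) and $\iota$ (enumerating the multi-indices $\myind{j}$) consistently through this identification. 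Beyond that purely notational matching, the result is a direct unwinding of the piecewise definition~\eqref{eq:defi_xi_h_enrichment} together with the linear independence of the local shape functions, so I expect no genuine analytic obstacle to arise.
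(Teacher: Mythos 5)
Your proposal is correct and follows essentially the same argument as the paper: both read the row $k=\nu(\myind{n},\myind{p})$ of $\mymatrix{D}_{\myind{i}}$ off the defining relation \eqref{eq:rep_phi_xi_neu_02}, split into the cases $\myind{i}\in\myind{I}(\myind{n})$ (where $\xi_{\myind{n},\myind{p}\,|\,T_{\myind{i}}}=\zeta_{\myind{j}(\myind{i},\myind{p})}^{\myind{i}}$ gives the unit vector at $l=\iota(\myind{j}(\myind{i},\myind{p}))$) and $\myind{i}\notin\myind{I}(\myind{n})$ (where the restriction vanishes), and invoke the linear independence of $\zeta_1^{\myind{i}},\ldots,\zeta_M^{\myind{i}}$ to conclude. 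Your added remark making explicit that the condition ``$\xi_{\myind{n},\myind{p}}=\zeta_{\myind{j}}^{\myind{i}}$'' means equality of restrictions on $T_{\myind{i}}$ is a harmless clarification of the same bookkeeping.
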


\begin{proof}
By the definition \eqref{eq:defi_xi_h_enrichment} of $hp$-enrichment functions, we have
$
 \xi_{\myind{n},\myind{p} \, | \, T_{\myind{i}}}
 = \zeta_{\myind{j}(\myind{i},\myind{p})}^{\myind{i}},
$
for any $\myind{i}\in \myind{I}(\myind{n})$, where $\myind{j}(\myind{i},\myind{p})\in\NN_0^d$ is defined in \eqref{eq:defi_index_for_h_xi}. Since the components of $\mymatrix{D}_{\myind{i}}$ are determined by
\begin{align*}
 \xi_{k \,| \, T_{\myind{i}}} = \sum_{l\in\underline{M}} d_{kl}^{\myind{i}} \, \zeta_l^{\myind{i}},
 \qquad k = \nu(\myind{n},\myind{p}),
\end{align*}
we immediately obtain $d_{kl}^{\myind{i}} = 1$ if $l = \iota(\myind{j}(\myind{i},\myind{p}))$, and $d_{kl}^{\myind{i}} = 0$ otherwise. In addition, for any $\myind{i}\in\lbrace 0,1\rbrace^d \setminus \myind{I}(\myind{n})$, we have $\xi_{\myind{n},\myind{p} \, | \, T_{\myind{i}}} \equiv 0$, wherefore the linear independence of the functions $\zeta_1^{\myind{i}},\ldots,\zeta_M^{\myind{i}}$ yields $d_{kl}^{\myind{i}}=0$ for any $l\in\underline{M}$.
\end{proof}

\begin{example}[Enumeration $\nu$ for uniform polynomial degrees]\label{ex:5}
Let us assign the same polynomial degree $p_{\text{unif}} \in \lbrace 2,\ldots, p_{\max}\rbrace$ to all elements $T_{\myind{i}}$ of the refinement $\R(Q)$, $\myind{i}\in\lbrace 0,1\rbrace^d$, i.e.~$
 \myind{P}(\myind{n}) = \lbrace 2,\ldots, p_{\text{unif}}\rbrace^r,
$
for any $r$-dimensional internal node $\myind{n}\in\N_r$, $r\in\underline{d}_0$. In this case, we observe that there are $(p_{\text{unif}} - 1)^r$ $hp$-enrichment functions on $Q$ that can be associated with $\myind{n}$. Hence, the total number of $hp$-refinement functions, which are associated with $r$-dimensional internal nodes, is given by
\begin{align*}
 L_r := \binom{d}{r} \, 2^r \, (p_{\text{unif}} - 1)^r, \qquad r\in\underline{d}_0.
\end{align*}
Now let us enumerate the $r$-dimensional internal nodes $\myind{n}=(\myind{a},\myind{p})\in\N_r$, $r\in\underline{d}_0$, by the bijective map $\nu_r : \N_r \to \big\lbrace 1,\ldots, \binom{d}{r} \, 2^r \big\rbrace$ given by
\begin{align*}
 \nu_r(\myind{n}) := 1 +  2^r\sum_{k\in\underline{r}} (a_k-k)   + \sum_{k\in\underline{r}} 2^{k-2} \, (\ell_k + 1),
 \qquad \myind{a}=(a_1,\ldots,a_r), \quad \myind{p}=(p_1,\ldots,p_r).
\end{align*}
Then, we can enumerate all $hp$-enrichment functions $\xi_{\myind{n},\myind{p}}$, for $\myind{n}\in\N$ and $\myind{p}\in\myind{P}(\myind{n})$, by the bijective map $\nu : \big\lbrace (\myind{n},\myind{p}) : \myind{n}\in\N, \, \myind{p}\in\myind{P}(\myind{n}) \big\rbrace \to \underline{L}$, with $L := L_0 + \cdots + L_d$, defined by
\begin{align*}
 \nu(\myind{n},\myind{p}) := 1 + \sum_{k\in\underline{r}} L_{k-1} + (\nu_r(\myind{n})-1) \, (p_{\text{unif}} - 1)^r + \sum_{k\in\underline{r}} (p_{\text{unif}} - 1)^{k-1} \, (p_k - 2),
\end{align*}
where we let $L_{-1} := 0$.
\end{example}


\section{$hp$-adaptivity based on locally predicted error reductions}\label{sec:adaptive_alg}

In this section, we will exploit our abstract results in~\S\ref{subsec:low_dim_enrichments} for the purpose of devising a new adaptive procedure for $hp$-type finite element discretizations. Our basic idea to $hp$-refine a given $hp$-finite element space~$\WW_\text{given}$ consists of three essential steps: 
\begin{enumerate}[Step 1.]

\item Firstly, our algorithm aims to \emph{predict} the potential contribution to the (global)  energy error reduction from each individual element~$Q$ in the given $hp$-space $\WW_{\text{given}}$. To this end, for every element~$Q$, with an associated local space $\WWloc$, cf.~\eqref{eq:Wloc}, we apply various \emph{local $p$-enrichment} or \emph{$hp$-replacement spaces}~$\YY$ , cf.~\eqref{eq:Y}, as follows:\medskip

\begin{enumerate}[(hp)]

\item[(p)] In the case of $p$-enrichments, we choose finitely many sets $\myind{J}_{d,i}$ in order to define suitable collections of $p$-enrichment functions $\mathfrak{E}_{p,i} := \lbrace \xi^Q_{\myind{j}} : \myind{j}\in\myind{J}_{d,i} \rbrace$, cf.~\eqref{eq:penrich}, and let $\YY_{p,i}:=\operatorname{span}\{\ut\}+\operatorname{span}\mathfrak{E}_{p,i}$ be the associated $p$-enrichment spaces. We then compute the respective predicted error reductions $\Delta e_{p,i}$ by means of the formula~\eqref{eq:errid} (or equivalently \eqref{eq:formula}). \medskip

\item In the case of $hp$-refinements, for any node $\myind{n}$ of a refinement $\R(Q)$ of $Q$, we choose finitely many sets $\myind{P}_{i}(\myind{n})$  to define collections of (nodal) $hp$-enrichment functions $\mathfrak{E}_{hp,\myind{n}}^{i}:=\big\lbrace \xi^Q_{\myind{n},\myind{p}} : \myind{p}\in \myind{P}_{i}(\myind{n}) \big\rbrace$, cf.~\eqref{eq:general_h_enrichment}, and let 
\begin{align*}
 \YY_{hp,i}:=\operatorname{span}\{\ut\}+\operatorname{span}\mathfrak{E}_{hp, i},\qquad\text{with}\qquad \mathfrak{E}_{hp, i}:= \bigcup_{\myind{n}} \mathfrak{E}_{hp,\myind{n}}^{i},
\end{align*}
be the associated $hp$-replacement spaces. We then compute the corresponding predicted error reductions $\Delta e_{hp,i}$ by means of the formula~\eqref{eq:errid} (or equivalently \eqref{eq:formula}). \medskip

\end{enumerate}
From all these choices of $p$-enrichments~(p) and $hp$-refinements~(hp), we select  the one that features the maximal error reduction (signified by $\Delta e_{\max}^Q$); this will be referred to as the \emph{optimal (local) $hp$-refinement} on~$Q$. Preferably, following the approach proposed in~\cite{houston:2016}, we select the local $p$-enrichments and $hp$-refinements in a \emph{competitive} way, i.e.~with a comparable number of local degrees of freedom; we will discuss this idea in more detail in the context of the $1$-dimensional numerical examples in \S\ref{sec:numerical_examples} below.\medskip

\item Subsequently, we \emph{mark} all elements in the global $hp$-finite element space~$\WW_{\text{given}}$, from which the most substantial error reductions, as identified in Step~1. for each element, can be expected. This step can be accomplished, for instance, with the aid of a suitable marking strategy, such as D\"{o}rfler's criterion~\cite{dorfler:1996}.\medskip

\item Finally, a new \emph{enriched} $hp$-finite element space $\WW_\text{new}$ is constructed based on choosing the optimal (local) $hp$-refinement space, cf. Step~1., for each of the marked elements.\\

\end{enumerate}
Schematically, the proposed $hp$-adaptive procedure has the following structure:\\
\begin{center}	
\includegraphics[scale=0.4]{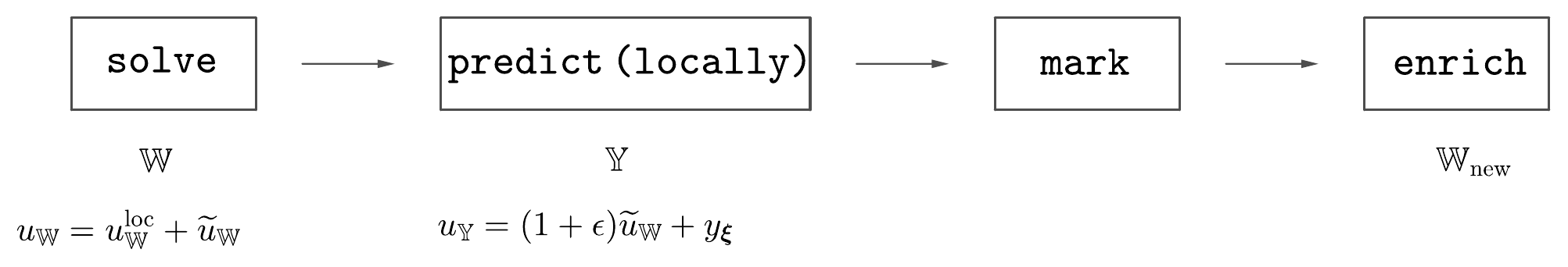}
\end{center}

\begin{remark}[Computational aspects of the prediction step]
We emphasize that the prediction step (Step~1.) is trivially parallelizable, and hence, inexpensive from a computational point of view. In addition, as all enrichment functions for the different $p$-enrichments and $hp$-refinements on $Q$ can be expressed by the functions $\zeta^{\myind{i}}_1,\ldots,\zeta^{\myind{i}}_M$, cf.~\eqref{eq:zetas}, the matrices $\mymatrix{A}_{\myind{i}}$, $\mymatrix{C}_{\myind{i}}$ as well as the vector $\myvec{b}_{\myind{i}}$ from~\S\ref{subsec:representation_matrices}, which are employed in the computation of the corresponding predicted local error reductions, need to be computed \emph{once only} on the subelements $T_{\myind{i}}$ of the local refinement $\R(Q)$ for \emph{all} different $p$-enrichments and $hp$-refinements applied on the subelements $T_{\myind{i}}$. 
\end{remark}

In Algorithm~1 we outline the technical details for the individual steps in the $hp$-adaptive refinement approach sketched above. We denote by $\WW(\Q,\myind{p}_{\Q})$ the $hp$-finite element space associated with the mesh $\Q$, and by $\myind{p}_{\Q}$ a corresponding polynomial degree distribution, cf.~\eqref{eq:FE_space}. Note that the \emph{solving step}, the \emph{prediction step} (exploiting $p$-enrichments and $hp$-refinements), the \emph{marking step} and the \emph{enrichment step} are implemented in the lines {\footnotesize{2}}, {\footnotesize{3-15}}, {\footnotesize{16}} and {\footnotesize{17}} in Algorithm~1, respectively.\\

\begin{remark}[Possible specifications of $hp$-adaptive procedure]~

\begin{enumerate}[(i)]

\item Evidently, Algorithm~1 can be turned into a pure $h$-adaptive procedure (by only considering $hp$-refinements that  inherit the \emph{fixed low-order} polynomial degree to any subelements), or into a pure $p$-adaptive procedure (whereby no element refinements are taken into account).

\item In the context of finite element discretizations for which the global polynomial degree is restricted to a small positive number $p_{\max}\ge 2$ (e.g. for quadratic elements), it is possible to first apply an $h$-adaptive mesh refinement procedure, thereby yielding a locally refined mesh~$\Q$, and then to employ $p$-enrichments based on Algorithm~1 in order to determine an effective polynomial distribution $\myind{p}_{\Q}$ with $\max\myind{p}_{\Q}\le p_{\max}$.

\end{enumerate}
\end{remark}

\begin{algorithm}[ht!]
{\textbf{Algorithm~1.} $hp$-adaptive procedure}
\vspace{0.1cm}
\hrule
\vspace{0.1cm}
\begin{algorithmic}[1]
\State{Choose an initial mesh $\Q_0$ on the computational domain $\Omega$ and a starting polynomial degree distribution~$\myind{p}_{\Q_0}$. Set $n:=0$.}
\State{Solve the weak formulation \eqref{eq:solW} for $u_{\WW}\in\WW(\Q_n, \myind{p}_{\Q_n})$.}\Comment{\textbf{solving step}}

\For {each element $Q\in\Q_n$}\Comment{\textbf{prediction step}}

\myState{Construct the (locally supported) subspace $\WWloc_Q\subset\WW(\Q_n, \myind{p}_{\Q_n})$, and decompose the current solution $u_\WW=\ut+\uloc$ according to \eqref{eq:dw},
where $\uloc := \Ploc_Q u_\WW\in\WWloc_Q$ is defined via the linear projection operator $\Ploc_Q : \WW(\Q_n, \myind{p}_{\Q_n})\to\WWloc_Q$ from~\eqref{eq:linear_projOp}.}
\For {finitely many different $p$-enrichments on $Q$}\Comment{$p$-enrichments}
\myStateDouble{Compute the corresponding predicted error reductions $\Delta e_{p,i}$ as outlined in Step.~1~(p) above.}
\EndFor

\myState{Set $\Delta e_{p,\max}^Q := \max \{\Delta e_{p,i}\}$ to be the maximal locally predicted error reduction for the $p$-enrichments on the element~$Q$.}
\myState{Construct the local refinement $\R(Q)$.\Comment{$hp$-refinement}}
\For {finitely many different $hp$-refinements on $Q$}
\myStateDouble{Compute the corresponding predicted error reductions $\Delta e_{hp,i}$ as outlined in Step.~1~(hp) above, and store the polynomial degree distribution $(\myind{p}_{T,i})_{T\in\R(Q)}$ for the subelements $T\in\R(Q)$.}
\EndFor

\myState{Set $\Delta e_{hp,\max}^Q := \max \{\Delta e_{hp,i}\}$ to be the maximal locally predicted error reduction for the $hp$-refinements on~$Q$.}
\myState{Determine $\Delta e_{\max}^{Q} := \max\big\lbrace \Delta e_{p,\max}^Q, \Delta e_{hp,\max}^Q \big\rbrace$ to be the maximal locally predicted error reduction.}

\EndFor
\State{Mark a subset $\mathcal{E}_n\subseteq \Q_n$ of elements in the mesh $\Q_n$ to be flagged for enrichment.}\Comment{\textbf{marking step}}
\State{For each element $Q\in\mathcal{E}_n$ perform a $p$-enrichment or an $hp$-refinement according to which enrichment leads to the maximal error reduction in {\footnotesize 14}. This results in a refined mesh $\Q_{n+1}$ with a corresponding polynomial degree distribution $\myind{p}_{\Q_{n+1}}$.}\Comment{\textbf{enrichment step}}

\State{Update $n\leftarrow n+1$, and start over in line {\footnotesize 2}.}
\State{After sufficiently many iterations output the final solution $u_{\WW}\in\WW(\Q_n, \myind{p}_{\Q_n})$.}
\end{algorithmic}
\end{algorithm}


\section{Numerical examples}\label{sec:numerical_examples}

In this section we  illustrate the performance of the $hp$-adaptive procedure outlined in Algorithm~1 with some numerical experiments in 1D and 2D.


\subsection{Numerical examples in 1D}

In the following $1$-dimensional examples on the domain $\Omega := (0,1)$ we use a basis for the $hp$-finite element spaces $\WW$ that consists of the usual hat-functions, and, on each element $Q\in\Q$ with $p_Q\geq 2$, of the (elementwise transformed) integrated Legendre polynomials  given by $\psi_j^Q := \psi_j\circ F_Q^{-1}$, for $
 2 \leq j \leq p_Q$, cf.~\eqref{eq:defi_1D_psi}. In accordance with \eqref{eq:defi_xi_p_enrichment} we consider the (extended) functions
\begin{align*}
 \xi_j^Q(x) := 
 \begin{cases} 
  \psi_j^Q(x), & \text{if } x\in Q, \\
  0, & \text{if } x\in\Omega\setminus Q.
 \end{cases}
\end{align*}
For each element $Q\in\Q$ of the current mesh, the locally supported subspace $\WWloc_Q\subset\WW$ is chosen as
\begin{align*}
 \WWloc_Q := \big\lbrace \xi_j^Q : 2\leq j \leq p_Q \big\rbrace.
\end{align*}
In particular, the corresponding linear projection operator $\Ploc_Q: \WW\to\WWloc_Q$ only retains all higher-order modes on~$Q$. Specifically, we choose
\[
 \mathfrak{E}_p^Q := \big\lbrace \xi_j^Q  : j\in\myind{J}_1^Q \big\rbrace, \qquad
 \text{with} \quad
 \myind{J}_1^Q := \lbrace 2,\ldots,p_Q+1 \rbrace,
\]
as \emph{$p$-enrichment} functions, i.e.~we increase the polynomial degree $p_Q$ to $p_Q+1$, thereby using a number of $p_Q$ locally supported degrees of freedom on~$Q$. Moreover, for an $hp$-\emph{replacement} on $Q$, we divide the element $Q$ into two equally sized subelements $T_0,T_1$, for which we allocate some local polynomial degrees $p_0,p_1$, respectively, that give rise to $p_0 + p_1 + 1$ degrees of freedom within~$Q$: 
\begin{center}
	\includegraphics[scale=0.4]{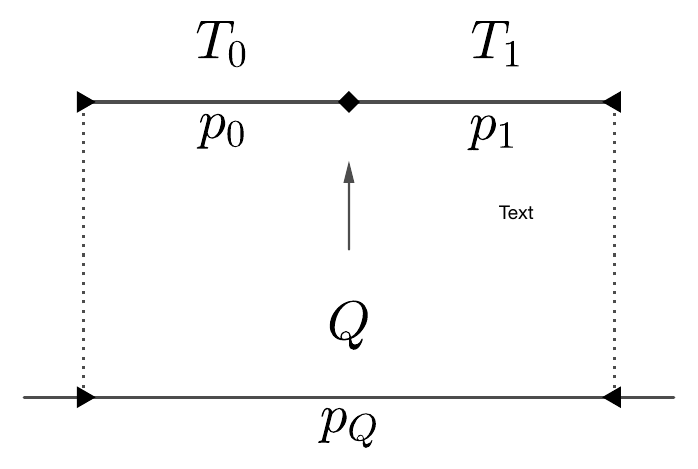}
\end{center}
In order to compare \emph{competitive $p$- and $hp$-refinements}, we impose the constraint
\begin{align}\label{eq:formula_competitive}
 p_0 + p_1 = p_Q + 1.
\end{align}
The only internal node for the refinement $\R(Q) = \lbrace T_0,T_1\rbrace$ of the element $Q$ is the $0$-dimensional midpoint, represented by $\myind{n}_1 := ((),())$. The subelements $T_1$, $T_2$ are characterized by the tuples $\myind{n}_2 := ((1),(0))$ and $\myind{n}_3 := ((1),(1))$. Therefore, in terms of the notation from~\S\ref{subsec:hp_enrichments}, we have $\N = \lbrace \myind{n}_1, \myind{n}_2, \myind{n}_3 \rbrace$, and we choose
\begin{align*}
 \myind{P}^{\R(Q)}(\myind{n}_1) = \big\lbrace () \big\rbrace, \qquad
 \myind{P}^{\R(Q)}(\myind{n}_2) = \big\lbrace 2,\ldots,p_1 \big\rbrace, \qquad
 \myind{P}^{\R(Q)}(\myind{n}_3) = \big\lbrace 2,\ldots,p_2 \big\rbrace,
\end{align*}
for all possible pairs $(p_1,p_2)$ of local polynomial degree combinations that satisfy \eqref{eq:formula_competitive}.


\subsubsection{Singularly perturbed problem with boundary layers}

As a first example, for (a possibly small) parameter $\varepsilon>0$, we consider the $1$-dimensional singularly perturbed differential equation
$
 -\varepsilon \, u'' + u = 1 $ in the domain $ \Omega=(0,1),
$
with the homogeneous Dirichlet boundary condition $u(0) = u(1) = 0$. The analytic solution is given by
\begin{align*}
 u(x) = \frac{e^{-c} - 1}{e^{c} - e^{-c}} \, e^{cx} + \frac{1 - e^{c}}{e^{c} - e^{-c}} \, e^{-cx} + 1, 
\end{align*}
with $c= \varepsilon^{-\nicefrac12}$; for very small $0<\varepsilon \ll 1$, we notice that $u$ exhibits thin boundary layers in the vicinity of the boundary points $x=0$ and $x=1$, and takes values of approximately~1 in the interior of the domain $\Omega$. For our numerical experiments, we initiate the $hp$-adaptive procedure of Algorithm~1 with a coarse mesh $\Q_0$ consisting of $4$ elements, and an associated uniform polynomial degree of $1$ on each of them. For the marking process we choose the D\"{o}rfler marking parameter to be~$\nicefrac12$. 

Figure~\ref{fig:resulting_meshHuston} shows the resulting $hp$-mesh after $28$ $hp$-adaptive steps for $\varepsilon = 10^{-5}$; we clearly see that the boundary layers have been resolved on a few small elements (of high polynomial degree), whilst no refinement is employed in the interior of the domain $\Omega$, where the solution is nearly constant.
\begin{figure}
	\centering
	\includegraphics[scale=0.15]{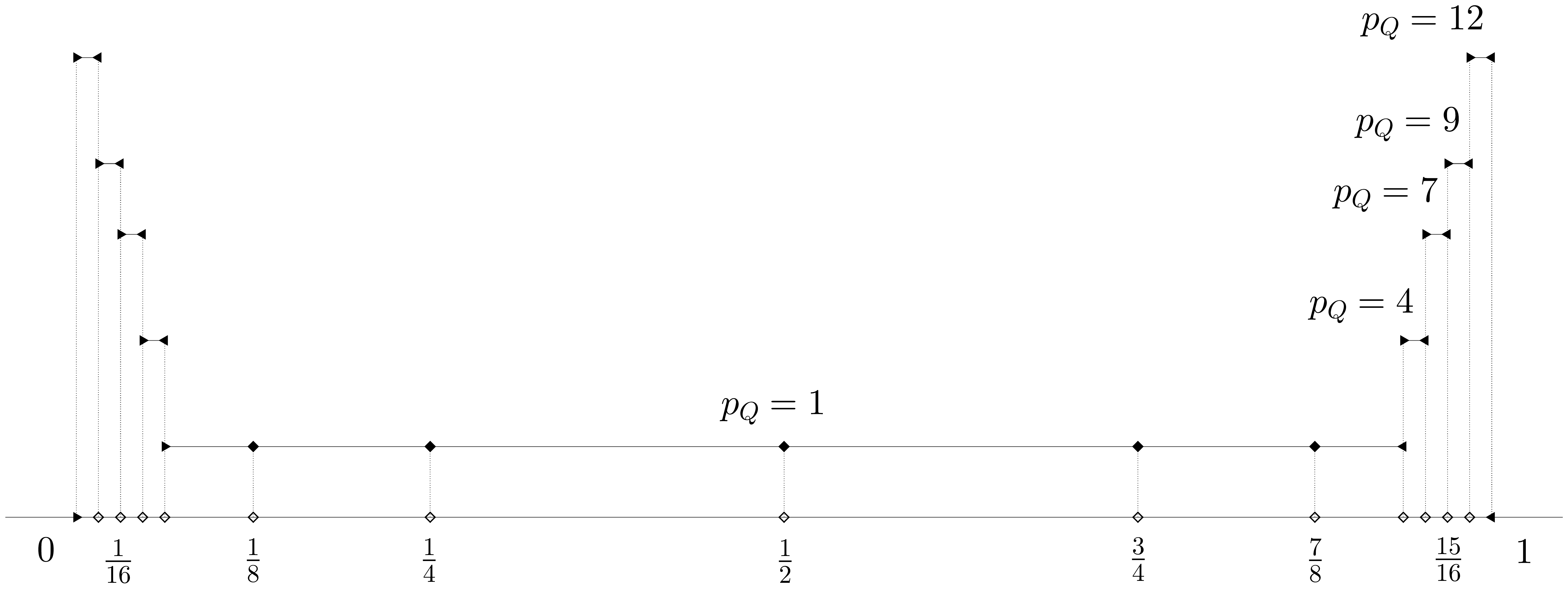}
	\caption{\it $hp$-mesh $\Q_{28}$ after $28$ adaptive enrichment steps with $\varepsilon = 10^{-5}$.}
	\label{fig:resulting_meshHuston}
\end{figure}
Moreover, in Figure~\ref{fig:convergence_Huston}, for the underlying energy norm given by
\begin{align*}
 \norm{v}^2 = \varepsilon \, \Vert v'\Vert_{L^2(\Omega)}^2 + \Vert v\Vert_{L^2(\Omega)}^2,
 \qquad v\in\XX,
\end{align*}
the error $\norm{ u - u_\WW}$ is plotted  with respect to the number of degrees of freedom in a semi-logarithmic scaling for several values of the singular perturbation parameter~$\varepsilon\in\{10^{-j}:\,j=3,4,5\}$. We observe that the $hp$-adaptive procedure is able to achieve an exponential rate of convergence that is fairly robust with respect to $\varepsilon$. In this regard, our results are comparable to alternative $hp$-adaptive strategies proposed in the literature, see, e.g.,~\cite[Expl.~2]{wihler:2011}.

\begin{figure}
	\centering
	\includegraphics[scale=0.35]{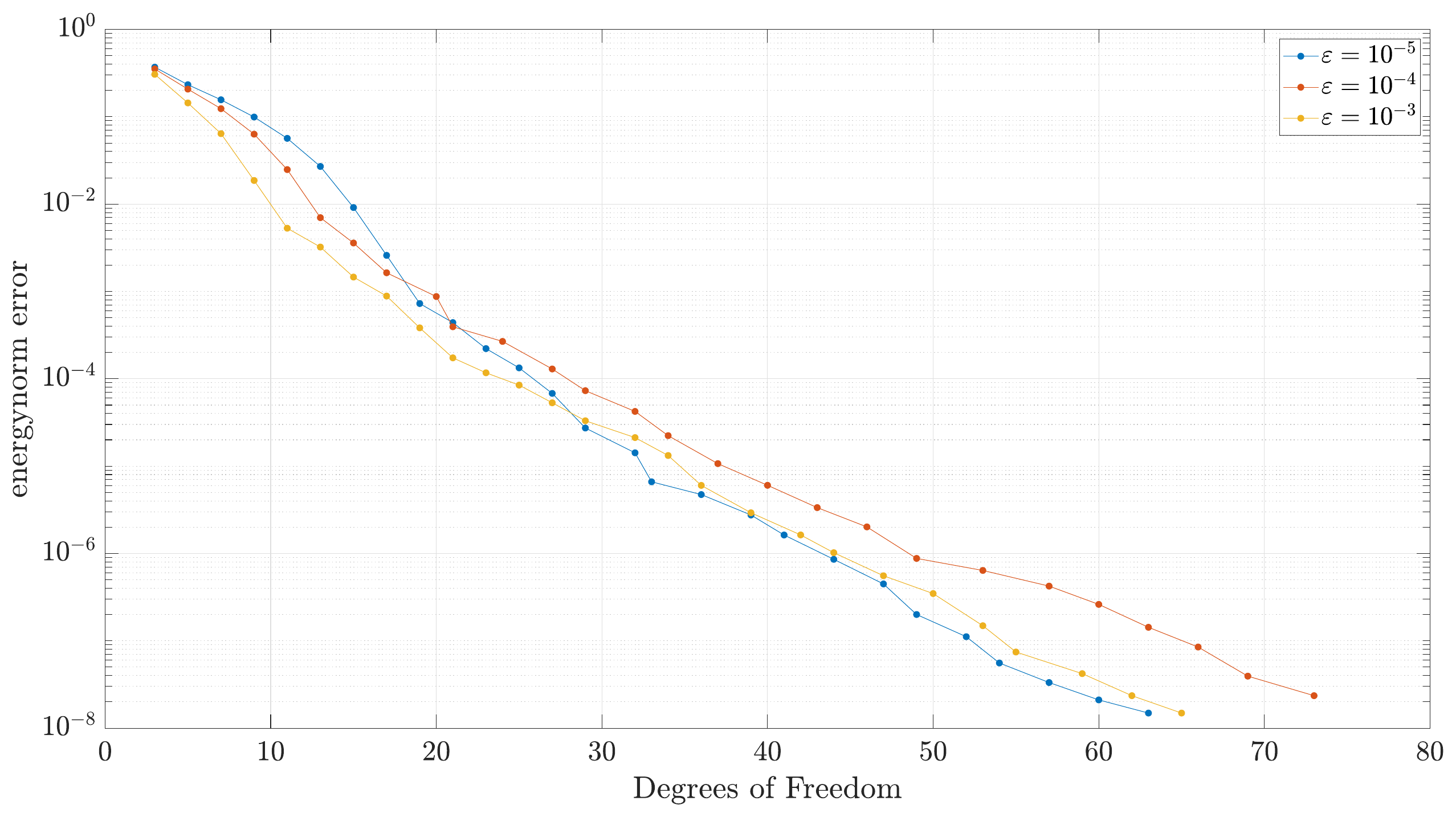}
	\caption{\it Singularly perturbed problem in 1D: Performance of the $hp$-adaptive procedure with respect to the energy norm error $\norm{u - u_{\WW}}$ for $\varepsilon = 10^{-j}$ with $j\in\lbrace 3,4,5\rbrace$.}
	\label{fig:convergence_Huston}
\end{figure}


\subsubsection{1D-model problem with a boundary singularity}

As a second example, we consider  the $1$-dimensional boundary value problem
$ - u'' = f$ in $\Omega=(0,1),$
with the homogeneous Dirichlet boundary conditions $u(0)=u(1)=0$; the right-hand side function $f$ is chosen in such a way that the analytic solution is given by $u(x) = x^{\nicefrac34} - x$, which features a singularity at~$x=0$ with $u'(x)\to\infty$ as $x\to0^+$. As in the previous example, we start the $hp$-adaptive procedure of Algorithm~1 with an initial mesh $\Q_0$ consisting of $4$ elements with a uniform polynomial degree distribution of $1$, and let the D\"{o}rfler marking parameter be $\nicefrac12$. The ensuing plot shows the resulting $hp$-mesh $\Q_{49}$, which containts 51 elements, after $49$ $hp$-adaptive enrichment steps:
\begin{center}
\includegraphics[scale=0.3]{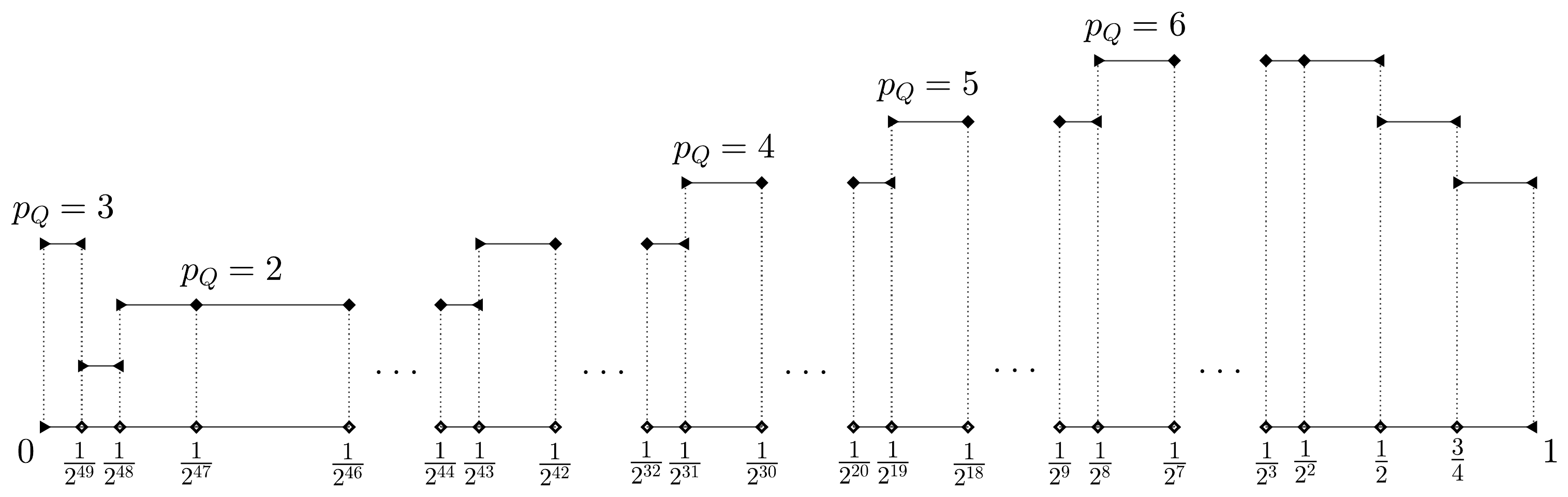}
\end{center}
The mesh is geometrically refined towards the singularity at $x=0$, and the polynomial degree is  increased at an approximately linear rate in dependence of the distance from the origin; this is in line with a priori results on exponentially convergent $hp$-FEM for local algebraic singularities; see, e.g., \cite{schwab:1998}. Indeed, this is confirmed in Figure~\ref{fig:convergence_1DPoisson}, where the error in the energy norm, i.e. $\norm{ u - u_\WW}=\Vert u'-u_{\WW}'\Vert_{L^2(\Omega)}$, shows an exponential decay (with respect to the square root of the number of degrees of freedoms) in a semilogarithmic scaling. 

\begin{figure}
	\centering
	\includegraphics[scale=0.33]{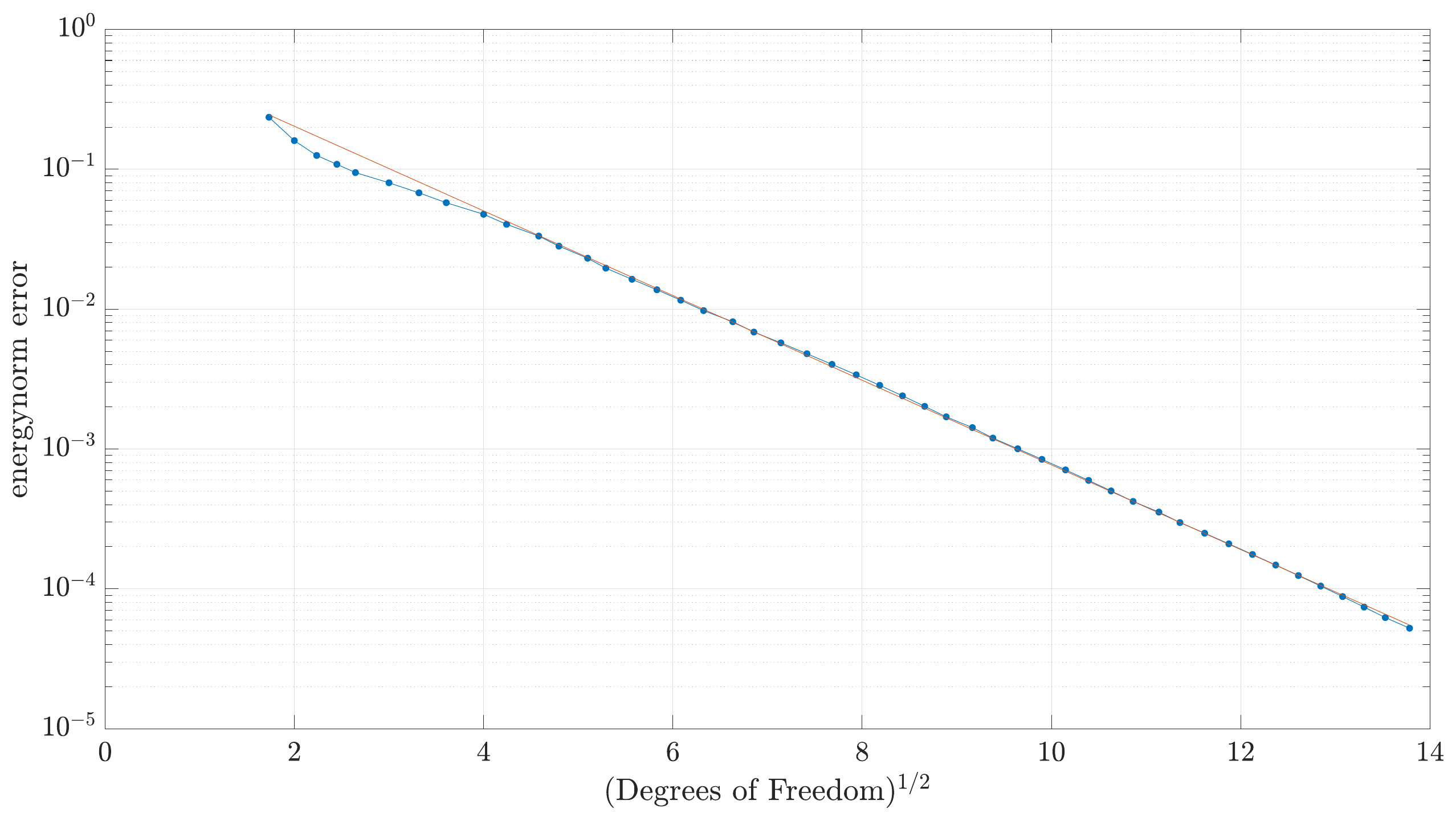}
	\caption{\it Algebraic boundary singularity in 1D: Performance of the $hp$-adaptive procedure with respect to the energy norm error $\norm{u - u_{\WW}}$ (measured against the square root of the number of degrees of freedom).}
	\label{fig:convergence_1DPoisson}
\end{figure}


\subsection{Numerical example in 2D}

In the following $2$-dimensional example over the unit square $\Omega := (0,1)^2$, we use a basis for the $hp$-finite element spaces $\WW$ that---in addition to the usual hat functions---contains the functions
\begin{align*}
 \psi_{\myind{j}}^Q := \myref{\psi}_{\myind{j}} \circ F_Q^{-1}, \qquad
 \myind{j} \in \lbrace 2,\ldots, p_Q\rbrace^2,
\end{align*}
extended by $0$ to $\Omega$, cf.~\eqref{eq:defi_dDim_psi}, for elements $Q\in\Q$ with a local polynomial degree $p_Q\geq 2$. In accordance with \eqref{eq:defi_xi_p_enrichment} we  denote the corresponding extended functions by $\xi_{\myind{j}}^Q$. Recall that we decompose the $hp$-finite element solution $u_{\WW}\in\WW$ of \eqref{eq:solW} into a local part $\uloc$ and a remaining part $\ut$, cf.~\eqref{eq:dw}. If we wish the remaining part $\ut$ to be completely independent of the enrichments on $Q$ (in the sense that $\ut$ remains unchanged under modifications in $\WWloc_Q$), we note that the locally supported subspace $\WWloc_Q$ needs to contain all basis functions of $\WW$ with support on $Q$; in particular, all interior bubble functions on $Q$. For simplicity (see Remark~\ref{rem:patches}), we set
\begin{align*}
 \WWloc_Q := \big\lbrace \xi_{\myind{j}}^Q : \myind{j} \in \lbrace 2,\ldots, p_Q\rbrace^2 \big\rbrace,
\end{align*}
and, on each element $Q\in\Q$ (with an associated local polynomial degree $p_Q$), we choose the $p$-enrichment functions to be
\begin{align}\label{eq:2D_enrichment}
 \mathfrak{E}_p^Q := \big\lbrace \xi_{\myind{j}}^Q  : \myind{j}\in\myind{J}_2^Q \big\rbrace, \qquad
 \text{with} \quad
 \myind{J}_2^Q := \lbrace 2,\ldots,p_Q+1 \rbrace^2,
\end{align}
i.e. we increase the local polynomial degree $p_Q$ to $p_Q+1$ in both coordinate directions, thereby resulting in $p_Q^2$ many locally supported enrichment functions on~$Q$. Moreover, an $hp$-refinement on an element $Q$ is based on dividing $Q$ into the four subelements $T_{\myind{i}}$ with $\myind{i}\in\lbrace 0,1\rbrace^2$:
\begin{center}
	\includegraphics[scale=0.5]{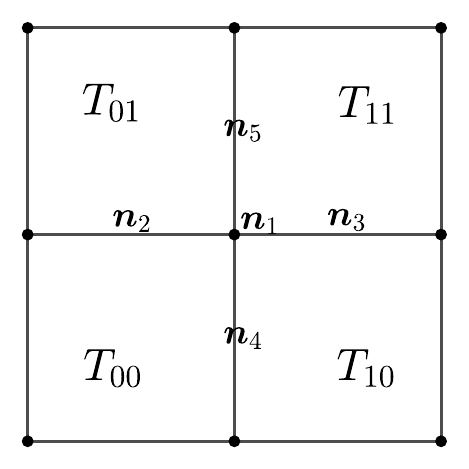}
\end{center}
In the present experiments we limit ourselves to enrichments that are restricted to single element $Q$ as pointed out in \S\ref{subsec:enrichmentF}, and do not involve enrichments on a patch around $Q$; see Remark~\ref{rem:patches} below for more details on this matter. For simplicity, we compare the $p$-enrichment \eqref{eq:2D_enrichment} with an $hp$-refinement on $Q$ that features the same polynomial degree $p_{Q}$ on all subelements $T_{\myind{i}}\in\R(Q)$, as it is done in most $hp$-adaptive algorithms.
The internal nodes of the refinement $\R(Q) = \lbrace T_{\myind{i}} : \myind{i}\in\lbrace 0,1\rbrace^2 \rbrace$ consist of the midpoint $\myind{n}_1 := ((),())$, of the edges
\begin{align*}
 \myind{n}_2 := ((1),(0)), \quad
 \myind{n}_3 := ((1),(1)), \quad
 \myind{n}_4 := ((2),(0)), \quad
 \myind{n}_5 := ((2),(1)),
\end{align*}
and of the subelements $T_{\myind{i}}$, represented by
\begin{align*}
 \myind{n}_6 := ((1,2),(0,0)), \quad 
 \myind{n}_7 := ((1,2),(1,0)), \quad 
 \myind{n}_8 := ((1,2),(0,1)), \quad 
 \myind{n}_9 := ((1,2),(1,1)).
\end{align*}
For the adaptive procedure, we set
\begin{align*}
 \myind{P}^{\R(Q)}(\myind{n}_j) = \begin{cases}
  \big\lbrace () \big\rbrace, & \text{for } j=1, \\
  \big\lbrace (p_1) : p_1\in\lbrace 2,\ldots,p_Q\rbrace \big\rbrace, & \text{for } 2\leq j\leq 5, \\
  \big\lbrace (p_1, p_2) : p_1,p_2 \in\lbrace 2,\ldots,p_Q\rbrace \big\rbrace, & \text{for } 6\leq j\leq 9,
 \end{cases}
\end{align*}
thereby leading to $1 + 4 \, (p_Q - 1) \, p_Q$ $hp$-enrichment functions on $Q$.

\begin{remark}[Competitive refinements for dimensions $d\ge 2$]\label{rem:patches}
In contrast to the $1$-dimensional case, if the remaining part $\ut$ of the solution $u_\WW$ should be independent (in the sense that $\ut$ remains unchanged upon modifications in $\WWloc_Q$) of an enrichment or a refinement associated with an element $Q$, it is mandatory to include element interface basis functions in the definition of the locally supported subspace $\WWloc_Q$. Indeed, only if degrees of freedom, which are shared by neighbouring elements, are taken into account as well, it is possible to compare different $p$-enrichments and $hp$-refinements in a truly competitive way. This is due to the fact that, in general, $p$-enrichments and $hp$-refinements on a single element~$Q$ may influence the new solution on a local patch which also involves neighbouring elements of~$Q$; we observe that this effect extends even beyond the direct neighbours of~$Q$ if hanging nodes, as can be seen in Figure~\ref{fig:patch_Q}, are present. In turn, we note that the enforcement of continuity of enrichment functions along element edges and faces of a patch around $Q$ (e.g.~by means of the minimum rule for the adjacent polynomial degrees) causes an additional challenge. For simplicity, for the purpose of the present paper, we will \emph{restrict ourselves to the enrichment of internal degrees of freedom only on each element}; competitive enrichments on patches for higher dimension are investigated in forthcoming work.
\end{remark}

\begin{figure}
	\centering
	\includegraphics[scale=0.4]{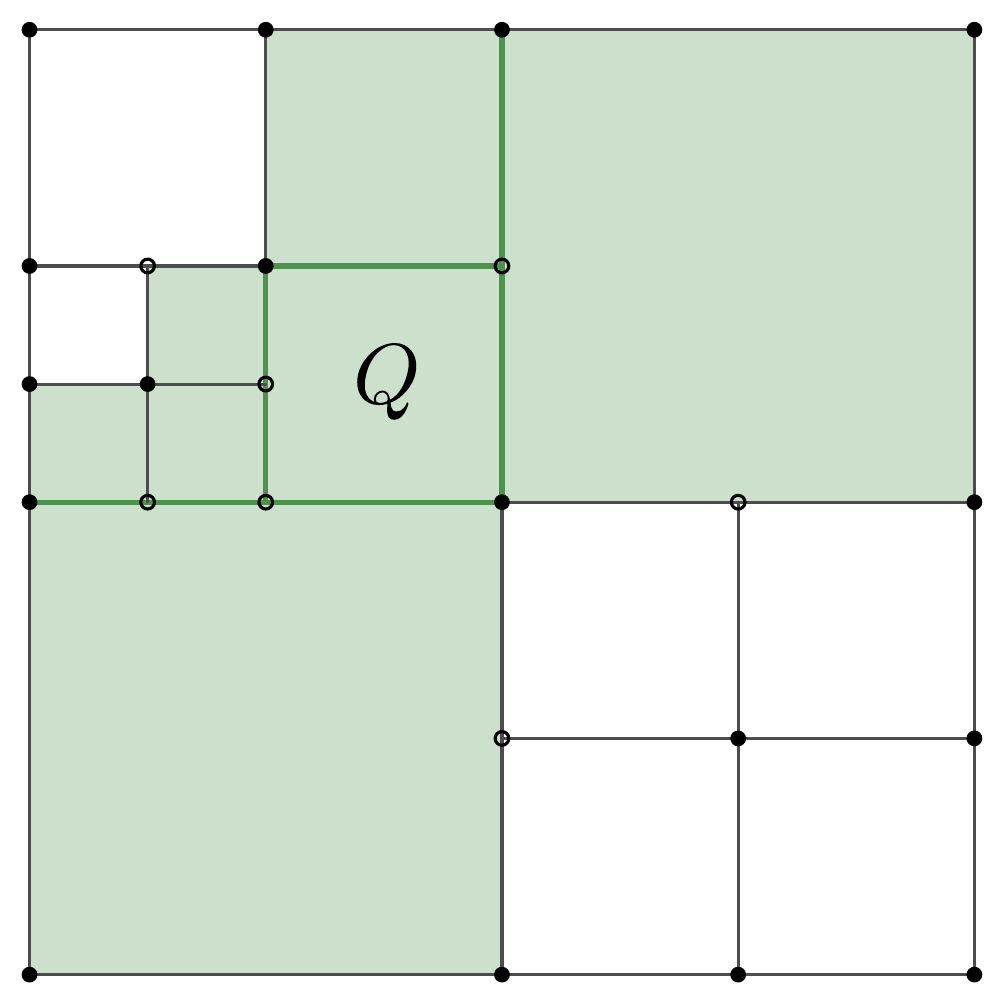}
	\caption{\it Extended patch around the element $Q$.}
	\label{fig:patch_Q}
\end{figure}


\subsubsection{2D-Poisson problem with corner singularities}

Let us consider the $2$-dimensional Poisson-problem
$ -\Delta u = 1 $ on $ \Omega=(0,1)^2$,
subject to the homogeneous Dirichlet boundary condition $u=0$ on $\Gamma := \partial\Omega$. While an explicit expression for the analytic solution $u$ is unavailable, an eigenfunction expansion yields that
\begin{align*}
 \norm{u}^2
 = \Vert \nabla u \Vert_{L^2(\Omega)}^2
 = \left( \frac{2}{\pi} \right)^6 \sum_{k,l\geq 1 \, \text{odd}} \frac{1}{k^2 \, l^2 \, (k^2 + l^2)}
 \approx 0.035144253738788451\ldots
\end{align*}
We start the $hp$-adaptive procedure of Algorithm~1 with an initial mesh $\Q_0$ consisting of $16$ elements with a uniform polynomial degree distribution of $1$ on all elements $Q\in\Q_0$. Moreover, we let the D\"{o}rfler parameter to be $\nicefrac14$. Figure~\ref{fig:2D_mesh_poisson} shows the resulting $hp$-mesh after $29$ adaptive enrichment steps (containing $124$ elements).
\begin{figure}
	\centering
	\includegraphics[scale=0.7]{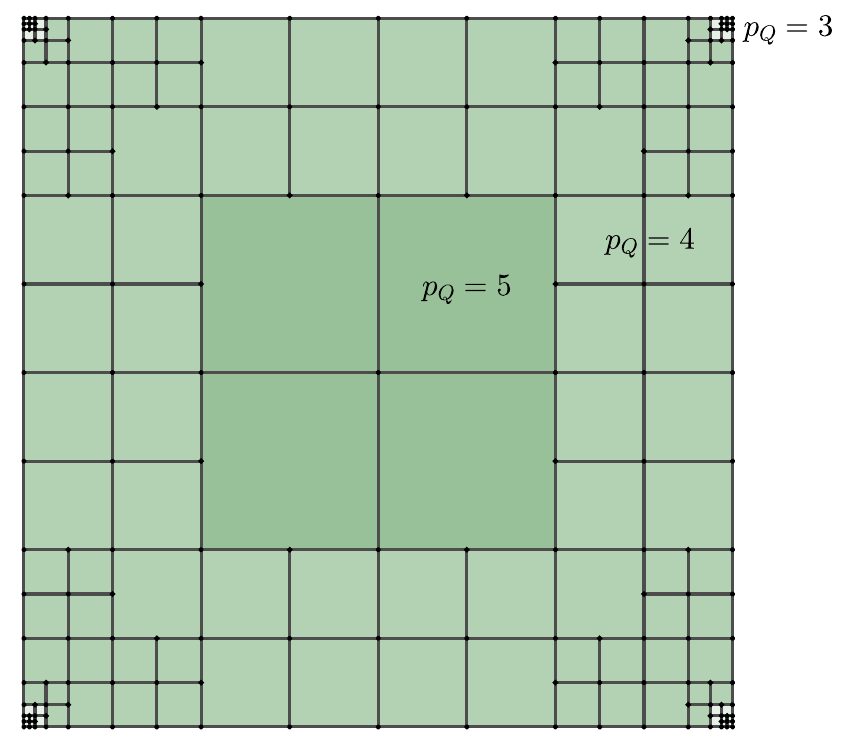}
	\caption{\it $hp$-mesh $\Q_{29}$ after $29$ adaptive enrichment steps.}
	\label{fig:2D_mesh_poisson}
\end{figure}
Following the a priori error analysis on the exponential convergence of $hp$-FEM for the 2-dimensional Poisson equation with corner singularities in polygons, see, e.g.~\cite{schwab:1998}, we depict the energy error $\norm{u - u_{\XX}}$  with respect to the 3rd root of the number of degrees of freedom in Figure~\ref{fig:convergence_2DPoisson}. The results show that our $hp$-adaptive algorithm is able to properly resolve the four corner singularities on geometrically refined meshes, and that approximately exponential rates of convergence can be achieved.

\begin{figure}
	\centering
	\includegraphics[scale=0.33]{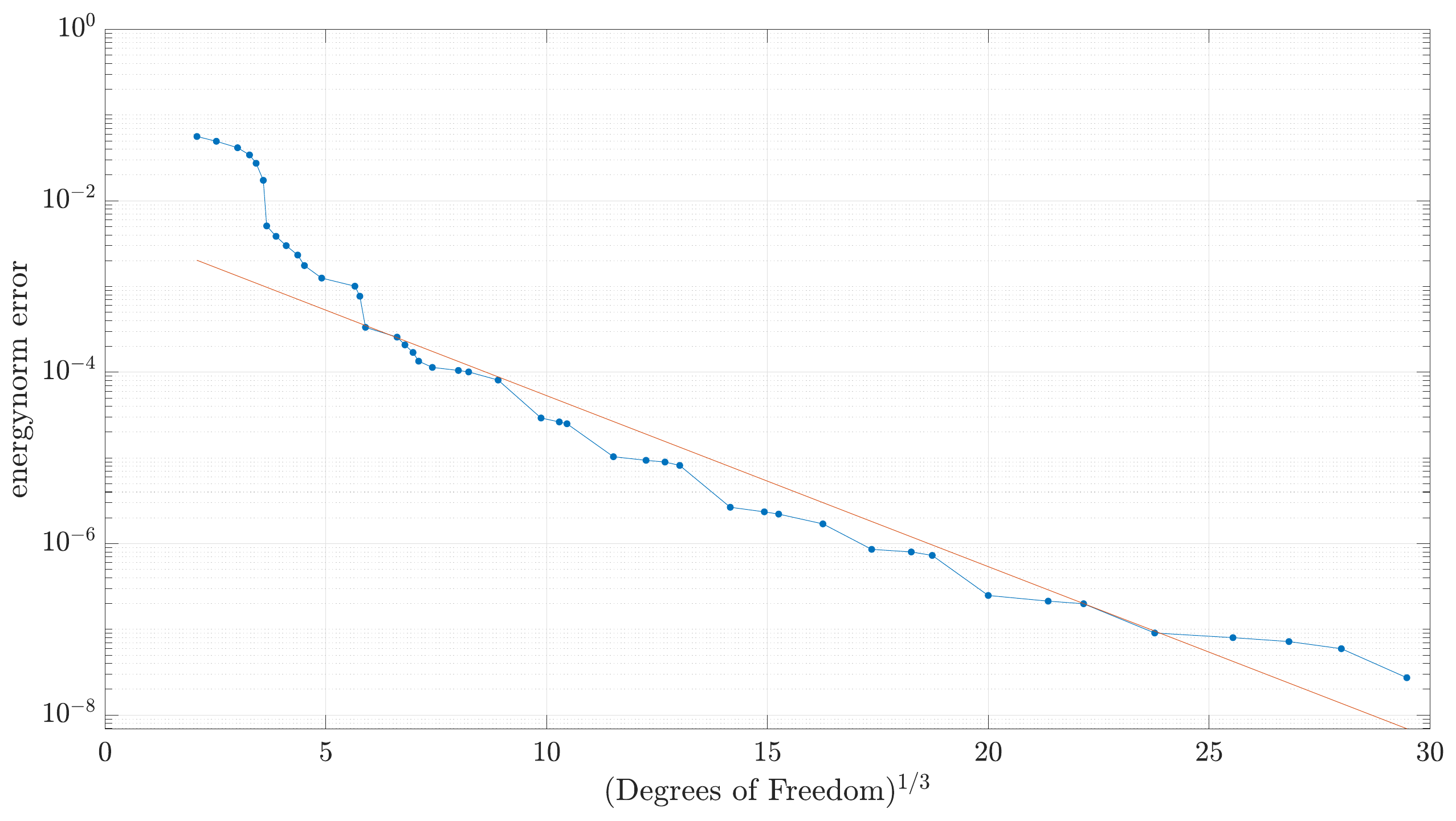}
	\caption{\it Poisson problem with corner singularities on unit square: Performance of the $hp$-adaptive procedure with respect to the energy norm error $\norm{u - u_{\WW}}$ (measured against the 3rd root of the number of degrees of freedom).}
	\label{fig:convergence_2DPoisson}
\end{figure}


\section*{Appendix}

We present some recursion formulas for the computation of the $1$-dimensional constraint coefficients applied in~\S\ref{subsec:constraint_coeff}. For this purpose, let $I=[a,b]$, with $-1\leq a < b \leq 1$, be an interval. For $j\in\NN_0$, recall that the restrictions of the functions $\psi_j:[-1,1]\to\RR$ from~\eqref{eq:defi_1D_psi} to the interval $I$ can be represented as linear combinations of the functions $\wt{\psi}_i:=\psi_i \circ F_I^{-1}$ on $I$, for $i\in \underline{j}_0$:
\begin{align*}
 \psi_{i \, | \, I} = \sum_{j=0}^{i} b_{i,j}^{I} \, \wt{\psi}_i, \qquad
 i\in\NN_0;
\end{align*}
here,  the bijective affine linear transformation $F_I:[-1,1]\to I$ is given by $F_I(t) = \alpha \, t + \beta$, with
$ \alpha := \nicefrac{1}{2}(b-a)$ and 
$ \beta  := \nicefrac{1}{2}(a+b)$.
We note that the uniquely determined \emph{constraint coefficients} $b_{i,j}^{I}$ in the above representation can be determined recursively (see~\cite{byfut:2017} for a proof):
\begin{enumerate}[(i)]
\item For $i,j\in\lbrace 0,1\rbrace^2$, the following identities can be derived:
\begin{align*}
 b_{0,0}^{I} = \frac{1}{2} \, (1+\alpha-\beta), \quad
 b_{1,0}^{I} = \frac{1}{2} \, (1-\alpha+\beta), \quad
 b_{0,1}^{I} = \frac{1}{2} \, (1-\alpha-\beta), \quad
 b_{1,1}^{I} = \frac{1}{2} \, (1+\alpha+\beta).
\end{align*}

\item Moreover, we find that
\begin{align*}
 b_{2,0}^{I} = \frac{1}{2} \, \Big( (\alpha-\beta)^2 - 1 \Big), \qquad
 b_{2,1}^{I} = \frac{1}{2} \, \Big( (\alpha+\beta)^2 - 1 \Big), \qquad
 b_{2,2}^{I} = \alpha^2.
\end{align*}

\item For $i\geq 3$, it holds $ b_{i,i}^{I}   = \alpha \,  b_{i-1,i-1}^{I}$
as well as 
\begin{align*}
 b_{i,0}^{I} &= \frac{1}{i} \, \Big( (2i - 3) \, (\beta - \alpha) \, b_{i-1,0}^{I} - (i-3) \, b_{i-2,0}^{I} \Big), \\
 b_{i,1}^{I} &= \frac{1}{i} \, \Big( (2i - 3) \, (\alpha + \beta) \, b_{i-1,1}^{I} - (i-3) \, b_{i-2,1}^{I} \Big), \\
 b_{i,2}^{I} &= \frac{1}{i} \, \bigg( (2i - 3) \, \Big( \alpha \, \Big( \tfrac{1}{5} \, b_{i-1,3}^{I} - \big( b_{i-1,0}^{I} - b_{i-1,1}^{I} \big) \Big) + \beta \, b_{i-1,2}^{I} \Big) - (i-3) \, b_{i-2,2}^{I} \bigg).
\end{align*}
In addition, for $i\geq 4$, we have
\begin{align*}
  b_{i,i-1}^{I} = \frac{2i-3}{i} \, \left( \frac{i-1}{2i-5} \, \alpha \, b_{i-1,i-2}^{I} + \beta \, b_{i-1,i-1}^{I} \right).
\end{align*}

\item For $i\geq 5$ and $j\in\lbrace 3,\ldots, i-2\rbrace$, the coefficients are given by
\begin{align*}
 b_{i,j}^{I} = \frac{1}{i} \, \bigg( (2i - 3) \, \Big( \alpha \, \Big( \tfrac{j}{2j-3} \, b_{i-1,j-1}^{I} + \tfrac{j-1}{2j+1} \, b_{i-1,j+1}^{I} \Big) + \beta \, b_{i-1,j}^{I} \Big) - (i-3) \, b_{i-2,j}^{I} \bigg).
\end{align*}

\item Finally, for $j\geq 2$ and $i\in\lbrace 0,\ldots, j-1\rbrace$, it holds $b_{i,j}=0$.
\end{enumerate}


\bibliographystyle{amsalpha}

\end{document}